\newtheorem{teo}{Theorem}[section]
\newtheorem{lem}[teo]{Lemma}
\newtheorem{co}[teo]{Corollary}
\newtheorem{pro}[teo]{Proposition}
\theoremstyle{remark}
\newtheorem{re}[teo]{Remark}
\theoremstyle{definition}
\def\R{{\mathbb R}}
\def\N{{\mathbb N}}
\def\T{{\mathbb{T}_m}}
\def\A{\mathcal{A}_{\beta}}
\def\p{p_\beta}
\def\B{{\mathscr{B}}}
\title[The evolution equation and the eigenvalue problem in a regular tree]{The evolution equation and the eigenvalue problem for the Laplacian in a regular tree}
\author[L. M. Del Pezzo, N. Frevenza and J. D. Rossi]
{Leandro M. Del Pezzo, Nicol{\'a}s Frevenza and Julio D. Rossi}
\address{Leandro M. Del Pezzo, Nicol{\'a}s Frevenza
\hfill\break\indent
Instituto de Estad{\'i}stica, FCEA,
Universidad de la Rep{\'u}blica and PEDECIBA,
\hfill\break\indent Gonzalo Ram{\'i}rez 1926 (11200),
Montevideo, Uruguay.}
\email{{\tt leandro.delpezzo@fcea.edu.uy, nicolas.frevenza@fcea.edu.uy}}
\address{Julio D. Rossi
\hfill\break\indent
Departamento de Matem{\'a}tica y Estadistica,  Universidad Torcuato Di Tella.  
\hfill\break\indent  Av. Figueroa Alcorta 7350, Buenos Aires, Argentina.
}
\email{{\tt julio.rossi@utdt.edu}}
\begin{document}

\keywords{Laplace type operator, Eigenvalue problem, Equations on trees, Evolution equation.\\
\indent 2010 {\it Mathematics Subject Classification.} 35R02, 35P15, 35K05.}

\begin{abstract} 
In this paper, our main goal is to study the evolution problem associated with the Laplacian operator with Dirichlet boundary conditions on a regular tree. To this end, we place special emphasis on the associated first eigenvalue problem, which provides the fundamental tool for describing the long-time dynamics. First, we prove existence and uniqueness of solutions when the initial condition is compatible with the boundary condition. Next, we address the asymptotic behavior of the solutions and show that they decay to zero exponentially fast. This decay rate is determined by the associated first eigenvalue, which we also analyze in detail.
\end{abstract}

\maketitle

\section{Introduction}

Our main goal in this paper is to analyze the asymptotic behavior of solutions to an evolution problem analogous to the heat equation with homogeneous Dirichlet boundary conditions, when the underlying space is a regular $m-$branching tree $\T$ (a connected infinite graph without cycles where each vertex has exactly $m$ successors). 
	
	Given an initial datum $f\colon \T \to \R$, we seek $u\colon \T\times [0,+\infty)\to \R$ satisfying 
	\begin{equation}
	\label{ec.evolucion-intro}
	\begin{cases}
	u_t(x,t) - \Delta_{\beta}u(x,t) = 0 & \text{ in } \T
	\times(0,+\infty), \\
	\lim\limits_{x\to y} u(x,t)=0 & \text{ on } \partial\T \times(0,+\infty),  \\
	u(x,0) = f(x) & \text{ in } \T.
	\end{cases} 
	\end{equation}
	Here $\Delta_{\beta}u (x,t)$ denotes a weighted mean value of $u (\cdot,t)$ over the neighbors of $x$, with weights depending on a parameter $\beta$ (see the precise definition below).  
	The Dirichlet boundary condition $\lim\limits_{x\to y} u(x,t)=0$ should be understood in terms of limits along branches of the tree. 

	The study of Laplacian-type operators on graphs provides a natural framework for doing analysis in a discrete geometrically meaningful setting. These operators translate basic notions of a graph, such as adjacency, distance, boundary, and other combinatorial structures, into a local operator that helps explain how global properties of the underlying space are encoded in the behavior of solutions to evolution equations and in the spectral structure of the operator (see for instance \cite{anandam,GraphsDirichletSpaces,Mugnolo}).

	Regular trees provide a natural model within graph theory. They can be viewed as the discrete counterpart of a geometry with exponential growth, where the “size” of the boundary grows as fast as the volume, a feature with deep implications for diffusion, decay rates, and the influence of boundary conditions. At the same time, the absence of cycles simplifies the local structure. This combination of local simplicity and global complexity makes $\T$ a natural framework in which to study a family of diffusion problems and to establish spectral properties of the associated operators, as well as a probabilistic interpretation of these phenomena.

	For the problem \eqref{ec.evolucion-intro}, we first establish existence and uniqueness of a solution and then we	investigate its long-time behavior as $t\to + \infty$. 
	As in the classical heat equation on bounded Euclidean domains, we find that solutions in our  setting also decay uniformly to zero in $ \T$ as $t\to + \infty$, with an exponential rate.
	This decay rate is determined by the principal eigenvalue of the operator. 
	
	We also provide a detailed analysis of the principal eigenvalue of $\Delta_{\beta}u$. 
	Spectral information for Laplace-type operators on graphs plays a central role in the analysis of diffusion phenomena and random walks. 
In particular, principal eigenvalues encode the long-time behavior of the associated heat flow (through exponential decay rates) and provide sharp thresholds separating qualitatively different regimes. 
Trees are a natural testing ground for these questions: they are the simplest infinite graphs exhibiting nontrivial boundary behavior and allow one to isolate genuinely spectral/probabilistic phenomena from cycle effects.

	The study of the principal eigenvalue is subtle, as different regimes arise depending on the value of $\beta$: in some cases, no principal eigenvalue exists; in others, there is a unique principal eigenvalue; and when $\beta=0$ every $\lambda \in (0,1]$ is a principal eigenvalue.
In other words, the behavior of the principal eigenvalue exhibits a phase transition that is linked to the recurrence and transience of the random walk naturally associated with $\Delta_{\beta}$.

\subsection{Setting and statements of the main results} 
	We begin by introducing the notation required for the precise formulation of our results.

	Let $m\in\mathbb{N}_{\ge2}$. 
	A regular $m-$branching tree $\T$ is a connected graph without cycles where each 
	vertex has $m$ successors.
	Formally, $\T$ consists of a root (denoted by $\emptyset$) and all finite sequences 
	$(\emptyset,a_1,a_2,\dots,a_k)$ with $k\in\N$, 
	whose coordinates $a_i$ belong to $\{0,1,\dots,m-1\}$. 
	A successor of vertex $x$ is obtained by appending one more coordinate to it, that is, $(x,i)$ with $i\in \{0,1,\dots,m-1\}$. 

	Note that when $x\neq\emptyset$, it has only one immediate predecessor 
	(or ancestor) which is denoted by $\hat{x}$.
	We say that $x\in\T$ is at level $k$ when $x=(\emptyset,a_1,a_2,\dots,a_k)$ 
	for some $k\in\mathbb{N}$, and we write $|x|$ to denote its level.

	An infinite sequence of vertices starting at the root $\emptyset$, where each vertex is followed by one of its immediate successor, is called a branch of $\T$.  
	The set of all such branches forms the boundary of $\T$, which we denote by $\partial\T$. 
	Consider the map $\psi \colon \partial\T\to[0,1]$ defined by
	\[
		\psi(x)
		\coloneqq
		\sum_{k=1}^{+\infty} \frac{a_k}{m^{k}},
	\]
	where $x=(\emptyset,a_1,\dots, a_k,\dots)\in\partial\T$ with $a_k\in\{0,1,\dots,m-1\}$ for all $k\in\mathbb{N}$. 
	For a vertex $x=(\emptyset,a_1,\dots,a_k)$ we extend $\psi$ by setting
	\[
		\psi(x)\coloneqq\psi(a_1,\dots,a_k,0,\dots,0,\dots).
	\]
	We denote this extension also by $\psi$. 

	In this setting, we study both the evolution problem and the associated eigenvalue problem for a particular Laplace-type operator defined on the tree. 
	Let $u\colon\T\to\mathbb{R}$ be a function, and let	$\beta\in[0,1).$ 
	The $\beta$-Laplace operator is defined by
	\[
		\Delta_\beta u(x)
		\coloneqq
		\begin{cases}
			\displaystyle \frac{1}{m} \sum_{i=0}^{m-1} u(\emptyset,i)-u(\emptyset) 
			&\text{if } x=\emptyset,\\
			\left(\beta u(\hat{x})+\displaystyle \frac{1-\beta}{m} \sum_{i=0}^{m-1} 
				u(x,i)-u(x)\right)\p^{-|x|}
			&\text{if } x\in\T\setminus\{\emptyset\},
		\end{cases}
	\]
	where
	\[
		\p\coloneqq
			\begin{cases}
				1 &\text{if } \beta=0,\\
				\frac{\beta}{1-\beta} &\text{if } \beta\in(0,1).
		\end{cases}
	\] 
	This is a weighted version of the Laplacian on $\T$. 
	
Notably, when \(\beta=\frac{1}{m}\) the definition coincides with the standard graph Laplacian, whereas for \(\beta=0\) it agrees with the notion of harmonicity on a directed (arborescence) tree,
since only the successors of a node are involved. 
Such operators have been extensively studied (see, for instance, \cite{anandam,DPMR1,DPMR2,DPMR3,KLW,KW} and references therein).
Thus, the parameter \(\beta\) yields a continuous family \(\Delta_\beta\) interpolating between these
two classical regimes, which naturally leads to the question of how the spectrum, and consequently, the long-time dynamics of the associated evolution problem, varies with \(\beta\).
Our results reveal a phase transition at \(\beta=\tfrac12\), consistent with the
recurrence/transience dichotomy of the random walk naturally associated with \(\Delta_\beta\).

    In \cite{DPFR1}, it is shown that for any datum $g\in C([0,1],\R)$, the Dirichlet problem
    \[
		\begin{cases}
			-\Delta_\beta u =0 &\text{on } \T,\\
			\lim\limits_{x\to y} u(x)=g(y) & \text{for } y\in \partial\T,
		\end{cases}
	\]
	admits a unique solution when $\beta<\tfrac12$, whereas no non-constant bounded solutions exist  when $\beta \geq \tfrac12$.
	Therefore, the problem with a continuous datum is solvable only if $\beta <\tfrac12,$ or if the boundary datum $g$ is constant. 
    Here, and in what follows, the limits involved in the boundary condition are understood 
    along the nodes in a branch as the level tends to infinity. 
    That is, if the branch is given by the sequence $\{x_n\} \subset \T$, 
$x_{n+1}$ is a successor of $x_{n}$, then we ask for $u(x_n) \to g(\psi (x_n))$ as $n\to \infty$.

	In this article, we study the eigenvalue problem for $\Delta_\beta$ 
	under homogeneous Dirichlet boundary conditions. 
	We say that $\lambda\in\mathbb{R}$ is an eigenvalue of $\Delta_\beta$ 
	if there  exists a bounded function $u\colon\T\to\mathbb{R}$ such that $u\not\equiv 0$ and
	\begin{equation}
		\label{prob.autovalores.def}
		\begin{cases}
			-\Delta_\beta u =\lambda u &\text{on } \T,\\
			\lim\limits_{x\to y} u(x)=0 & \text{for } y\in \partial\T.
		\end{cases}
	\end{equation}
	Such a function $u$ is called an eigenfunction associated to $\lambda.$
	An eigenvalue $\lambda>0$ is called a principal eigenvalue if there is a 
	non-negative eigenfunction corresponding to it.
	Our aim is to establish the existence of a principal eigenvalue for certain values of $\beta$,
	to characterize it, and to derive bounds depending on $\beta$.

	We begin our analysis by considering the case \(\beta=0.\)
	
	\begin{teo}\label{Intro casobeta=0}
	Let \(\beta=0\). Then $\lambda$ is a principal 
    	eigenvalue of \(\Delta_\beta\) 
    	if and only if \( \lambda\in (0,1].\) Moreover, for every
    	\(\lambda \in (0,1],\) the function
    	\[
    		u(x)=(1-\lambda)^{|x|} 
    	\]
    	is an eigenfunction associated to \(\lambda\).
	\end{teo}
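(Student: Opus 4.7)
The plan is to establish both directions of the equivalence by exploiting that at $\beta=0$ the operator $\Delta_\beta$ simplifies uniformly on the whole tree.

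For the ``if'' direction, I would first observe that $\beta=0$ gives $\p=1$, and moreover the predecessor term $\beta u(\hat x)$ vanishes at every $x\neq \emptyset$; hence on all of $\T$ (root included)
\[
\Delta_0 u(x)=\frac{1}{m}\sum_{i=0}^{m-1}u(x,i)-u(x).
\]
Plugging in $u(x)=(1-\lambda)^{|x|}$ and using $|(x,i)|=|x|+1$, a direct computation yields $\Delta_0 u(x)=(1-\lambda)^{|x|+1}-(1-\lambda)^{|x|}=-\lambda(1-\lambda)^{|x|}=-\lambda u(x)$, so the eigenvalue equation holds pointwise. It remains to check that $u$ is bounded, non-negative, and satisfies the Dirichlet boundary condition: for $\lambda\in(0,1)$ this is immediate because $1-\lambda\in(0,1)$, so $0<u\le 1$ and $u(x_n)\to 0$ along any branch $\{x_n\}\subset\T$; for $\lambda=1$ the function reduces to $\one_{\{\emptyset\}}$ (using the convention $0^0=1$), which is bounded, non-negative, and trivially satisfies the boundary condition.

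For the ``only if'' direction, I would use a one-line sign argument. Assume $\lambda>0$ is a principal eigenvalue with non-negative, nontrivial eigenfunction $u$; rewriting $-\Delta_0 u=\lambda u$ gives
\[
\frac{1}{m}\sum_{i=0}^{m-1}u(x,i)=(1-\lambda)u(x)\qquad\text{for every }x\in\T.
\]
If $\lambda>1$ and $u(x_0)>0$ for some $x_0\in\T$, the left-hand side at $x_0$ is non-negative while the right-hand side is strictly negative, a contradiction. Hence $\lambda\le 1$, which combined with the definition of principal eigenvalue forces $\lambda\in(0,1]$.

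I do not foresee a serious obstacle: the key observation is that at $\beta=0$ the operator is independent of $|x|$ (the factor $\p^{-|x|}$ collapses) and decouples into a one-step recursion, which makes the explicit eigenfunction both natural and easy to verify. The only minor point of care is the degenerate case $\lambda=1$, where the candidate eigenfunction is supported only at the root but still qualifies as a bounded, non-negative, nontrivial solution of \eqref{prob.autovalores.def}.
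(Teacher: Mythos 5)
Your proposal is correct and essentially mirrors the paper's proof: the direct verification that $u(x)=(1-\lambda)^{|x|}$ (read as the indicator of the root when $\lambda=1$) solves \eqref{prob.autovalores.def} is what the paper calls straightforward, and your sign argument giving $\lambda\le 1$ is exactly the $\beta=0$ case of the computation in Lemma \ref{lema_fe_2}. The only (harmless) deviation is that you obtain $\lambda>0$ from the introduction's definition of principal eigenvalue rather than from Lemma \ref{lema_fe_1}; that is legitimate for this statement, though note your sign argument alone would not exclude $\lambda\le 0$, so under the definition as restated in Section \ref{sppe} one would need that lemma's boundary-condition argument.
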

	
	It is easy to check that
	\[
		u(\emptyset)= 1 \text{ and } u(x)=0 \, \text{ for all }
		x\neq \emptyset
	\]
	and
	\[
		v(\emptyset)=v(\emptyset,0)=1, \, v(\emptyset,1)=-1
			\text{ and } v(x)=0 \, \text{ otherwise }		
	\]
	are two different eigenfunctions corresponding to \(\lambda=1.\) 
	Therefore, \(\lambda=1\) admits nonnegative eigenfunctions and eigenfunctions that change sign.

	Next, we deal with the case \(0 < \beta < \frac12\).

	\begin{teo} \label{primer_teorema}
		Let $\beta\in (0,\tfrac12).$ 
		Define $\lambda_1(\beta)\coloneqq\sup\A$ where
		\[
		\A 
		\coloneqq 
		\Big\{\lambda >0 \colon 
		\exists v \colon \T\to\R \text{ with } 
		0< c < v < C, \text{ satisfying } \Delta_{\beta}v + \lambda v \le 0 \text{ in } \T 
		\Big\}.
		\]
		Then, $\lambda_1(\beta)$ is a principal eigenvalue of $\Delta_\beta$.

		Moreover, there exists a strictly positive 
		eigenfunction $u\colon \T \to \R$ such that $u$ is constant at each level of $\T$ and decreases with respect to the level, \emph{i.e.}, $u(x)=f(|x|)$ where 
		$f\colon\mathbb{N}_0\to\mathbb{R}$ is a decreasing function.
	\end{teo}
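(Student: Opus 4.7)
The plan is to restrict attention to level-constant (radial) functions, reduce the eigenvalue problem to a one-dimensional second-order linear recurrence, use a shooting argument to isolate a critical parameter $\lambda_{\ast}$, and then identify $\lambda_{\ast}$ with $\lambda_1(\beta)$.

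As a preliminary step, I would check that $\A$ is nonempty and contained in $(0,1]$, so that $\lambda_1(\beta)$ is finite and positive. The algebraic identity $(1-\beta)y^{2}-y+\beta=(1-\beta)(y-1)(y-\p)$ shows that $w(x)=\p^{\alpha|x|}$ with $\alpha\in(0,1)$ satisfies $\Delta_\beta w(x)<0$ at every non-root vertex, with $|\Delta_\beta w|$ growing like $\p^{-(1-\alpha)|x|}$ as $|x|\to\infty$. Then $v(x)=1+\varepsilon\p^{\alpha|x|}$ is trapped between two positive constants, and for $\varepsilon>0$ fixed and $\lambda>0$ sufficiently small it satisfies $\Delta_\beta v+\lambda v\le 0$; hence $\A\neq\emptyset$. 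The bound $\lambda_1(\beta)\le 1$ follows by testing the supersolution inequality at the root: $\lambda v(\emptyset)\le v(\emptyset)-m^{-1}\sum_{i}v(\emptyset,i)\le v(\emptyset)$.

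The heart of the argument is a shooting procedure for level-constant solutions $u(x)=f(|x|)$. The equation $-\Delta_\beta u=\lambda u$ becomes
\begin{equation}\label{recur-plan}
(1-\beta)f(k+1)=(1-\lambda \p^{k})f(k)-\beta f(k-1),\qquad k\ge 1,
\end{equation}
together with $f(1)=(1-\lambda)f(0)$. Normalizing $f_\lambda(0)=1$ makes $f_\lambda$ uniquely determined, and $\lambda\mapsto f_\lambda(k)$ is continuous for every fixed $k$. Define
\[
\lambda_{\ast}:=\sup\{\lambda>0 \colon f_\lambda(k)>0 \text{ for every } k\ge 0\}.
\]
A direct induction from \eqref{recur-plan} shows that while $f_\lambda$ remains positive it is strictly decreasing. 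Continuity then yields $f_{\lambda_{\ast}}\ge 0$ and decreasing, and a unique-continuation property for \eqref{recur-plan} (two consecutive zero values force the trivial solution) combined with $f_{\lambda_{\ast}}(0)=1$ upgrades this to strict positivity. Thus $u(x):=f_{\lambda_{\ast}}(|x|)$ is a bounded, strictly positive, level-decreasing function satisfying $-\Delta_\beta u=\lambda_{\ast}u$. The Dirichlet condition at $\partial\T$ then follows from the asymptotic dichotomy for \eqref{recur-plan}: the general solution splits as $A+B\p^{k}+o(\p^{k})$, and $A(\lambda_{\ast})=0$, for otherwise continuity of the shooting map would place values $\lambda>\lambda_{\ast}$ in the positivity regime as well, contradicting the definition of $\lambda_{\ast}$.

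To match with the variational quantity, I would finally show $\lambda_{\ast}=\lambda_1(\beta)$. The inequality $\lambda_{\ast}\le\lambda_1$ uses that for $\lambda<\lambda_{\ast}$, strict monotonicity of $A(\lambda)$ ensures $A(\lambda)>0$, so $v(x)=f_\lambda(|x|)$ is bounded between two positive constants and belongs to $\A$. For the reverse inequality, given $\lambda\in\A$ with supersolution $v$ satisfying $0<c\le v\le C$, setting $g(k):=\inf_{|x|=k}v(x)$ yields a scalar supersolution for \eqref{recur-plan}; a discrete comparison with $f_\lambda$ then forces $f_\lambda$ to remain positive at every level, whence $\lambda\le\lambda_{\ast}$. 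The main difficulty I anticipate is the careful analysis of the non-autonomous recurrence \eqref{recur-plan} -- strict monotonicity in $\lambda$, the precise dichotomy at infinity, and the discrete comparison principle against the inf-over-levels supersolution. Because of the $k$-dependent coefficient $(1-\lambda\p^{k})$, classical Sturm--Picone comparison does not apply directly, so I would bypass this obstacle through telescoping identities based on the factorization $(1-\beta)y^{2}-y+\beta=(1-\beta)(y-1)(y-\p)$ already used to build supersolutions.
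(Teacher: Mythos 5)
Your route is genuinely different from the paper's. The paper follows a Berestycki--Nirenberg--Varadhan scheme: for $\lambda\in\A$ below $\lambda_1$ it builds, by a Perron-type supremum dominated by a strict supersolution, a nonnegative level-constant solution of $\Delta_\beta\varphi_\lambda+\lambda\varphi_\lambda=-1$; it shows $\|\varphi_{\lambda_n}\|_\infty\to\infty$ as $\lambda_n\nearrow\lambda_1$ (otherwise a small perturbation would place $\lambda_1+\theta$ in $\A$), normalizes and passes to the limit to obtain an eigenfunction for $\lambda_1$, and gets positivity and the vanishing along $\partial\T$ by the same perturbation trick (if the limit along levels were $c>0$, subtract $c/2$ and increase $\lambda$ to contradict $\lambda_1=\sup\A$). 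Your shooting argument on the level recurrence is more explicit and yields the decreasing profile at once, but it pushes all of the difficulty into the non-autonomous three-term recurrence, and the two places where you wave at the analysis are exactly where the work lies.

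First gap: the deduction that $A(\lambda_*)=0$ ``for otherwise continuity of the shooting map would place values $\lambda>\lambda_*$ in the positivity regime.'' Continuity of $\lambda\mapsto f_\lambda(k)$ is only locally uniform in $k$, while membership in the positivity regime imposes infinitely many conditions; continuity alone cannot control the tail. The step is repairable: monotonicity in $\lambda$ of the ratios $f_\lambda(k+1)/f_\lambda(k)$ (immediate by induction on the Riccati form of your recurrence, valid while $f_\lambda>0$) combined with the telescoped identity
\[
 f_\lambda(K+1)=1-\lambda\sum_{k=0}^{K}\p^{k}\Bigl(1+\frac{1}{1-\beta}\sum_{j=1}^{k}f_\lambda(j)\Bigr)
\]
(empty inner sum for $k=0$) shows that if $\lim_k f_{\lambda_*}(k)=c>0$, then $f_\lambda$ stays bounded below by a positive constant for all $\lambda$ in a right neighborhood of $\lambda_*$, which is the contradiction you want. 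But this uniform tail estimate must be proved; it is not a consequence of continuity. Incidentally, this identity also gives the existence of $A(\lambda)$ for free (your $f_\lambda$ is monotone while positive), so the full Poincar\'e--Perron dichotomy $A+B\p^{k}+o(\p^{k})$ is more machinery than you need.

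Second gap: the inequality $\lambda_*\le\lambda_1$ rests on $A(\lambda)>0$ for every $\lambda<\lambda_*$, which you justify by ``strict monotonicity of $A(\lambda)$.'' The ratio comparison only gives $f_\lambda\ge f_{\lambda'}$ and hence non-strict monotonicity of $A$; strictness at the value $0$ is a genuine statement (it says two distinct values cannot both admit positive level solutions vanishing at infinity) and is nowhere proved. It can be established, for instance, by a discrete Wronskian identity for $f_\lambda$ and $f_{\lambda_*}$ together with the decay rate $f\asymp\p^{k}$ when $A=0$ (both again consequences of telescoping), or by invoking the paper's parabolic comparison principle as in its lower-bound lemma for eigenvalues. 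As written, without this step the inclusion $(0,\lambda_*)\subseteq\A$, and therefore the identification $\lambda_*=\sup\A$, is not established. The remaining ingredients of your plan (nonemptiness of $\A$ and the bound $\lambda\le1$, strict decrease by induction, strict positivity of $f_{\lambda_*}$ via the first-zero argument, and the comparison showing every $\lambda\in\A$ lies in the positivity regime through the level infimum or level average of the supersolution) are sound.
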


	The following proposition provides bounds for $\lambda_1(\beta)$ in terms of 
	$\beta$.

	\begin{pro} 
		\label{intro_prop_cota}
		Let $\beta\in(0,\tfrac12).$ 
		Then \(\lambda_1(\beta)\) is a non-increasing function on \((0,\frac12),\) and satisfies
		\[
			\frac{(1-2\beta)^2}{\beta^2 + (1-\beta)^2}\le 
			\lambda_1(\beta)\le \frac{1-2\beta}{1-\beta}.
		\]
	\end{pro}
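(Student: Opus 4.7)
The plan is to derive both bounds simultaneously from a single identity satisfied by the radial decreasing eigenfunction $u(x)=f(|x|)$ supplied by Theorem \ref{primer_teorema}. Writing $\lambda=\lambda_1(\beta)$, the function $f\colon\mathbb{N}_0\to\mathbb{R}$ is strictly positive, non-increasing, and satisfies $f(k)\to 0$ as $k\to\infty$ thanks to the Dirichlet condition along any branch. Introducing the non-negative increments $d_k:=f(k-1)-f(k)\ge 0$, the equation $-\Delta_\beta u=\lambda u$ yields $d_1=\lambda f(0)$ at the root and, after splitting $f(k)=\beta f(k)+(1-\beta)f(k)$, the recurrence
\[
(1-\beta)\,d_{k+1}\;=\;\beta\,d_k+\lambda\,\p^{\,k}\,f(k)\qquad(k\ge 1)
\]
at every interior vertex.

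I would then sum this recurrence over $k\ge 1$. The telescoping identity $\sum_{k\ge 1}d_k=f(0)$ (which uses $f(\infty)=0$) together with $d_1=\lambda f(0)$ produce, after a short manipulation, the key identity
\[
f(0)\,\bigl(1-\p-\lambda\bigr)\;=\;\frac{\lambda}{1-\beta}\sum_{k=1}^{\infty}\p^{\,k}f(k). \qquad(\star)
\]
The upper bound is now immediate: the right-hand side of $(\star)$ is strictly positive, forcing $\lambda<1-\p=(1-2\beta)/(1-\beta)$. For the lower bound I would use that $f$ is non-increasing, so $f(k)\le f(0)$ and hence $\sum_{k\ge 1}\p^{\,k}f(k)\le f(0)\,\p/(1-\p)$; substituting into $(\star)$ and solving for $\lambda$ yields
\[
\lambda\;\ge\;\frac{(1-\beta)(1-\p)^{2}}{(1-\beta)(1-\p)+\p}\;=\;\frac{(1-2\beta)^{2}}{\beta^{2}+(1-\beta)^{2}},
\]
the last equality being a routine simplification using $1-\p=(1-2\beta)/(1-\beta)$ and $\beta^{2}+(1-\beta)^{2}=1-2\beta+2\beta^{2}$.

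For the monotonicity of $\beta\mapsto\lambda_1(\beta)$ I would fix $\beta_1<\beta_2<1/2$ and use the radial eigenfunction $u$ of $\Delta_{\beta_2}$ with eigenvalue $\lambda_2:=\lambda_1(\beta_2)$ as a trial function at $\beta_1$. The algebraic decomposition
\[
\beta_1 d_k-(1-\beta_1)d_{k+1}\;=\;\bigl[\beta_2 d_k-(1-\beta_2)d_{k+1}\bigr]-(\beta_2-\beta_1)(d_k+d_{k+1}),
\]
combined with the eigenvalue equation at $\beta_2$ and the fact that $p_{\beta_1}<p_{\beta_2}$, shows that $\Delta_{\beta_1}u+\lambda_2 u<0$ strictly at every interior vertex, while $\Delta_{\beta_1}u(\emptyset)+\lambda_2 u(\emptyset)=0$ at the root (the root formula does not depend on $\beta$). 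Replacing $u$ by $v:=u+\varepsilon$ (for which $\Delta_{\beta_1}v=\Delta_{\beta_1}u$, since constants are annihilated by $\Delta_\beta$) and relaxing the eigenvalue to $\lambda_2-\delta$, one can choose $0<\varepsilon\ll\delta$ so that the strict interior inequality is preserved and the root equality becomes a strict inequality; the resulting $v$ certifies $\lambda_2-\delta\in\mathcal{A}_{\beta_1}$, and letting $\delta\to 0$ gives $\lambda_1(\beta_1)\ge\lambda_2=\lambda_1(\beta_2)$. The main obstacle is the uniform-in-$k$ control of the interior gap $-(\Delta_{\beta_1}u(x)+\lambda_2 u(x))>0$, which is what permits the perturbation to absorb the added positive term $(\lambda_2-\delta)\varepsilon$ at every vertex of $\T$.
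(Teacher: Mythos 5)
Your derivation of the two bounds is correct and is essentially the paper's own argument in a different dress: your identity $(\star)$, obtained by summing the level recurrence and telescoping the increments $d_k$ (using $f(k)\to 0$ and $d_1=\lambda f(0)$), is exactly the paper's identity $\lambda_1\sum_{k\ge 0}p_\beta^k u_k = 1-2\beta+\beta\lambda_1$ after rearrangement, and both proofs then use only $f(k)\le f(0)$ and the geometric series to squeeze $\lambda_1$ between the two stated expressions.

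The monotonicity part, however, has a genuine gap. You take the radial eigenfunction $u$ of $\Delta_{\beta_2}$, note correctly that $\Delta_{\beta_1}u+\lambda_2 u<0$ at interior vertices and $=0$ at the root, and then pass to $v=u+\varepsilon$ with eigenvalue $\lambda_2-\delta$, claiming that ``$0<\varepsilon\ll\delta$'' preserves the interior inequality. But at a vertex of level $k$ the perturbed quantity is $\Delta_{\beta_1}u(x)+\lambda_2 u(x)-\delta u(x)+(\lambda_2-\delta)\varepsilon$, and since $u(x)\to 0$ along branches the term $-\delta u(x)$ is useless at deep vertices: the added constant $(\lambda_2-\delta)\varepsilon$ must be absorbed by the interior gap itself, uniformly in $k$. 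You name this as ``the main obstacle'' but do not establish it, and it is not automatic: the gap equals $\lambda_2 u_k\bigl[(p_{\beta_2}/p_{\beta_1})^k-1\bigr]+p_{\beta_1}^{-k}(\beta_2-\beta_1)(u_{k-1}-u_{k+1})$, and both factors $u_k$ and $u_{k-1}-u_{k+1}$ tend to $0$. The argument can be completed, but only by invoking a quantitative lower bound on the increments, e.g. $u_k-u_{k+1}\ge \lambda_2\, p_{\beta_2}^{\,k}$ (this is the last lemma of Section \ref{cpe2}, or follows by induction from the recurrence), which gives a gap of at least $(\beta_2-\beta_1)\lambda_2 (p_{\beta_2}/p_{\beta_1})^{k}$, uniformly bounded below; without some such input the perturbation step fails. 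Note also that the paper avoids this entirely and more cheaply: for $\gamma<\beta$ it shows the set inclusion $\mathcal{A}_{\beta}\subseteq\mathcal{A}_{\gamma}$ directly, taking an arbitrary witness $v$ of $\lambda\in\mathcal{A}_{\beta}$ (already bounded below by a positive constant, and by Remark \ref{remark_cte_niveles} level-constant and non-increasing) and checking $\Delta_{\gamma}v\le\Delta_{\beta}v\le-\lambda v$; no eigenfunction, no $\varepsilon$-perturbation and no limiting in $\delta$ are needed.
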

	
	For $\beta\in[\frac12,1),$ no principal eigenvalue exists.
	This fact is consistent with the behavior of the random walk associated with \(\Delta_{\beta}\): given a vertex, the walk moves to the first ancestor with probability \(\beta\), and to one of its \(m\) descendants with probability \((1-\beta)/m\).
This random walk can be coupled with a walk on \(\N\) by considering the distance to the root of the tree. The latter walk is recurrent for \(\beta\in [\tfrac12,1)\), that is, it visits the root infinitely many times and therefore does not descend far enough into the tree to ``see'' the Dirichlet boundary condition.
Recall that in \cite{DPFR1} it was proved analytically that, for \(\beta\in [\tfrac12,1)\), there are no non-constant bounded harmonic functions associated with \(\Delta_{\beta}\); the underlying heuristic reason is the same. This also explains the behavior in the case \(\beta=\tfrac12\).

	\begin{teo}\label{intro_casobeta>12}
		Let \(\beta\in [\frac12,1)\). If $\lambda$ is an 
    	eigenvalue of \(\Delta_\beta\) 
    	then it is not a principal eigenvalue.
	\end{teo}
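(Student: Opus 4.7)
I argue by contradiction: suppose, for some $\beta\in[\tfrac12,1)$, that some $\lambda>0$ is a principal eigenvalue of $\Delta_{\beta}$, and let $u\colon\T\to\R$ be a bounded, non-negative, non-trivial eigenfunction satisfying $-\Delta_{\beta}u=\lambda u$ together with the Dirichlet condition. The first ingredient is a propagation-of-zeros principle. At every $x\neq\emptyset$ the eigenvalue equation reads
\[
\bigl(1-\lambda\,\p^{\,|x|}\bigr)u(x)=\beta\,u(\hat x)+\frac{1-\beta}{m}\sum_{i=0}^{m-1}u(x,i),
\]
and at the root it reads $(1-\lambda)u(\emptyset)=\tfrac{1}{m}\sum_i u(\emptyset,i)$. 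Since $u\ge 0$ and $\beta,1-\beta>0$, a single vanishing $u(x_0)=0$ forces $u(\hat x_0)=0$ and $u(x_0,i)=0$ for every successor; iterating up to $\emptyset$ and then back down yields $u\equiv 0$. Hence for a non-trivial eigenfunction I may assume $u>0$ everywhere on $\T$.

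The next step reduces the problem to a scalar recurrence via the spherical average
\[
\bar u(n)\coloneqq\frac{1}{m^{n}}\sum_{|x|=n}u(x),\qquad n\ge 0,
\]
which is bounded and, by the previous step, strictly positive. Summing the eigenvalue equation over all $x$ with $|x|=n$, and using that every vertex at level $n-1$ has exactly $m$ children at level $n$, produces
\[
(1-\beta)\,\bar u(n+1)=\bigl(1-\lambda\,\p^{\,n}\bigr)\bar u(n)-\beta\,\bar u(n-1),\qquad n\ge 1,
\]
together with the initial relation $\bar u(1)=(1-\lambda)\bar u(0)$ coming from the equation at the root.

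I now split into cases. If $\beta\in(\tfrac12,1)$, then $\p>1$, so $1-\lambda\,\p^{\,n}<0$ for every sufficiently large $n$; the right-hand side of the recurrence then consists of two strictly negative terms, forcing $\bar u(n+1)<0$, which contradicts $\bar u>0$. If $\beta=\tfrac12$, then $\p=1$ and the recurrence degenerates to the constant-coefficient equation $\bar u(n+1)=2(1-\lambda)\bar u(n)-\bar u(n-1)$. For $\lambda\in(0,2)$ the characteristic roots are the complex pair $e^{\pm i\theta}$ with $\cos\theta=1-\lambda$ and $\theta\in(0,\pi)$, and the initial condition singles out the explicit solution $\bar u(n)=u(\emptyset)\cos(n\theta)$, which becomes negative at some $n\ge 1$. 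For $\lambda\ge 2$ the roots are real and non-positive (coalescing at $-1$ when $\lambda=2$), and boundedness of $\bar u$ combined with the initial relation once more forces an oscillating or identically zero solution. In every instance $\bar u$ fails to remain strictly positive, which is the required contradiction.

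\textbf{Main obstacle.} The regime $\beta\in(\tfrac12,1)$ is essentially immediate once the spherical average is in hand, because the coefficient $1-\lambda\,\p^{\,n}$ flips sign at high levels. The delicate case is $\beta=\tfrac12$, where $\p=1$ and no such sign change occurs. The crux is thus the explicit analysis of the constant-coefficient recurrence, together with the observation that the root condition $\bar u(1)=(1-\lambda)\bar u(0)$ rigidly selects the pure cosine mode, whose oscillations are incompatible with $\bar u>0$.
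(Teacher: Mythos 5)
Your proof is correct, and it differs from the paper's in both halves. For $\beta\in(\tfrac12,1)$ the paper works with $u$ itself: since $p_\beta>1$, the coefficient $1-\lambda p_\beta^{|x|}$ is eventually nonpositive, which together with $u\ge 0$ forces $u(x)=0$ at all deep levels, and then the term $\beta u(\hat x)$ propagates the zeros back to the root, giving $u\equiv 0$; you instead pass to the spherical average and use strict positivity (which you re-derive by your zero-propagation argument, where the paper quotes its Maximum Principle) to get an immediate sign contradiction — essentially the same mechanism, executed on $\bar u$ rather than on $u$. The genuine divergence is at $\beta=\tfrac12$: the paper also reduces to a level function, but then invokes its Lemma \ref{lema_fe_3_beta12} to get $\tilde u(k)\to 0$, locates the last level where the maximum is attained, and runs a discrete-convexity/increment argument ($\tilde u(k_0+i)-\tilde u(k_0+i+1)\ge c$ for all $i$) to drive $\tilde u$ negative; you instead solve the constant-coefficient recurrence explicitly, observe that the root relation $\bar u(1)=(1-\lambda)\bar u(0)$ annihilates the sine mode, and conclude $\bar u(n)=\bar u(0)\cos(n\theta)$ with $\cos\theta=1-\lambda$, which cannot stay nonnegative. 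Your route is more computational but more self-contained: it needs neither the decay of the averaged eigenfunction nor the bound $\lambda\le 1$, and (except for boundedness in one sub-case) it does not even use the Dirichlet condition; the paper's route is softer and avoids solving the recurrence. Two small points to tighten: (i) justify that $\cos(n\theta)<0$ for some $n$ — take the first $n$ with $n\theta>\pi/2$; since $\theta<\pi$ one gets $\pi/2<n\theta<3\pi/2$; (ii) the regime $\lambda\ge 2$ (indeed any $\lambda\ge 1$) needs no root analysis or boundedness at all, since $\bar u(1)=(1-\lambda)\bar u(0)\le 0<\bar u(0)$ already contradicts strict positivity, so you can replace the somewhat vague ``oscillating or identically zero'' sentence by this one line.
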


	Once we have clarified the eigenvalue problem, our next goal is to study the evolution equation 
	associated with $\Delta_{\beta}.$
	Given an initial profile $f\colon \T \to \R$, we consider the problem of finding a function $u\colon \T\times [0,+\infty)\to \R$ such that
	\begin{equation}
	\label{ec.evolucion}
	\begin{cases}
	u_t(x,t) - \Delta_{\beta}u(x,t) = 0 & \text{ in } \T
	\times(0,+\infty), \\
	\lim\limits_{x\to y} u(x,t)=0 & \text{ on } \partial\T \times(0,+\infty),  \\
	u(x,0) = f(x) & \text{ in } \T.
	\end{cases} 
	\end{equation}
	It should be highlighted that, in the case $\beta=0$ this evolution equation was first 
	studied in \cite{DPMR3}.

	To precisely state the existence and uniqueness result for the evolution problem associated with the Laplacian $\Delta_{\beta}$, we first define the space in which the solution is defined. 
	We denote by $L_{loc}^{\infty}$ the space defined by
	\[
		L_{loc}^{\infty}(\T\times[0,\infty))
		\coloneqq \{v \in L^\infty(\T\times[0,T])\, \text{ for all } T \geq 0 \}.
	\]

	\begin{teo}\label{intro_teo_exist_1}
		Let $\beta\in (0,\tfrac12),$ and let $f\in L^{\infty}(\T)$. 
		Then, there exists a unique solution $u\in L_{loc}^{\infty}(\T\times[0,\infty))$ 				to \eqref{ec.evolucion}. 	
	\end{teo}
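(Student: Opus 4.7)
The plan is a three-step program: construct $u$ as the pointwise limit of solutions on finite truncations of~\eqref{ec.evolucion}, verify that the limit satisfies the equation and the boundary condition, and deduce uniqueness from a parabolic comparison principle built on the principal eigenfunction of Theorem~\ref{primer_teorema}. For each $n\in\N$, let $u^n$ denote the unique solution of the finite-dimensional linear ODE system
\[
\partial_t u^n(x,t)=\Delta_\beta u^n(x,t)\text{ for }|x|\le n,\ \ u^n(x,t)=0\text{ for }|x|=n+1,\ \ u^n(x,0)=f(x)\text{ on }\{|x|\le n\}.
\]
Since at each vertex $\Delta_\beta$ equals $\p^{-|x|}$ times a convex combination of neighbor values minus the central one, the finite-dimensional parabolic maximum principle yields $\|u^n(\cdot,t)\|_{L^\infty(\T)}\le\|f\|_{L^\infty(\T)}$ for every $n$ and $t\ge 0$. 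The ODE also gives $|\partial_t u^n(x,t)|\le 2\p^{-|x|}\|f\|_\infty$, so for each fixed $x$ the family $\{u^n(x,\cdot)\}_n$ is locally equicontinuous in $t$. By Arzelà--Ascoli and a diagonal extraction over the countable set $\T$, a subsequence converges pointwise (and uniformly on compact time intervals for each $x$) to some $u\in L^\infty_{\rm loc}(\T\times[0,\infty))$; locality of $\Delta_\beta$ allows one to pass to the limit in the ODE and conclude $u_t=\Delta_\beta u$ on $\T\times(0,\infty)$ together with $u(\cdot,0)=f$.

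\emph{Uniqueness via comparison.} Let $\varphi$ denote the positive, level-constant, level-decreasing eigenfunction from Theorem~\ref{primer_teorema}, with $\Delta_\beta\varphi = -\lambda_1(\beta)\varphi$ and $\lim_{x\to y}\varphi(x)=0$ on $\partial\T$. For $\epsilon,\delta>0$ and $A>-\lambda_1(\beta)$, a direct computation shows that if $w$ satisfies $w_t=\Delta_\beta w$ then
\[
\partial_t\bigl(w-\epsilon\varphi\,e^{At}-\delta t\bigr)-\Delta_\beta\bigl(w-\epsilon\varphi\,e^{At}-\delta t\bigr) = -\epsilon(A+\lambda_1(\beta))\varphi e^{At}-\delta<0,
\]
so the perturbed function is a strict sub-solution. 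Applied to $w=u_1-u_2$ the difference of two solutions with identical data, $w(\cdot,0)\equiv 0$ and $w(\cdot,t)$ decays along every branch for $t>0$. Because $\varphi$ also decays along branches, any maximizing sequence of this sub-solution on $\T\times[0,T]$ must eventually be confined to a finite set $\{|x|\le K\}$, so its supremum is attained at a vertex; the strict sub-solution inequality then forces this maximum to occur at $t=0$ with value $-\epsilon\varphi\le 0$. Letting $\epsilon,\delta\to 0$ gives $u_1\le u_2$, and by symmetry $u_1\equiv u_2$.

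\emph{Boundary condition.} The main obstacle is verifying $\lim_{x\to y}u(x,t)=0$ for the constructed $u$: because $f\in L^\infty(\T)$ need not decay along branches, $u^n(\cdot,0)$ cannot be globally dominated by a multiple of $\varphi$. I would treat this by splitting $f = f\chi_{\{|x|\le N\}}+f\chi_{\{|x|>N\}}$ and using linearity. For the compactly supported piece, the function $C_N\varphi(x)e^{-\lambda_1(\beta)t}$ is a super-solution dominating the initial datum and decaying along every branch, so parabolic comparison on each truncation yields the required decay of the corresponding component of $u$. For the tail, the drift condition $\beta<\tfrac12$ together with the exponentially growing diagonal coefficient $\p^{-|x|}$ forces information at deep levels to dissipate rapidly to $\partial\T$; this can be captured by comparing with the solution of $v_t=\Delta_\beta v$ with $v(\cdot,0)\equiv 1$ and $v|_{\partial\T}=0$, and then establishing $v(x,t)\to 0$ along branches. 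Proving the decay of $v$---where the coefficient growth $\p^{-|x|}\to\infty$ competes with the constant initial datum---is the most delicate step of the argument, and it is here that the drift condition $\beta<\tfrac12$ enters essentially.
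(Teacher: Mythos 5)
Your truncation construction and the a priori bounds are fine, and invoking the principal eigenfunction of Theorem~\ref{primer_teorema} is legitimate (its proof does not rely on this theorem). But there are two genuine gaps. First, in the uniqueness argument you need the supremum of $z=w-\epsilon\varphi e^{At}-\delta t$ over $\T\times[0,T]$ to be attained at some vertex, and you justify this by saying that $w$ and $\varphi$ decay along every branch. That does not follow. The boundary condition in \eqref{ec.evolucion} is a limit along each single branch, for each fixed $t$; it gives no smallness of $\sup_{|x|=n}|w(x,t)|$, let alone uniformly in $t\in[0,T]$. For instance, the indicator of the set of vertices $(\emptyset,0,\dots,0,1)$ tends to $0$ along every branch while its supremum on every level equals $1$. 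Moreover, since $\varphi\to0$ along branches, subtracting $\epsilon\varphi$ penalizes nothing near $\partial\T$ (one would need a penalization bounded away from zero, or blowing up, at infinity). Hence a maximizing sequence can escape to infinity across different branches and with varying times, and the maximum-principle step is never triggered. The paper avoids spatial localization altogether: it rewrites the problem as the fixed-point equation $u=\mathcal{K}_\beta^f u$ (see \eqref{Kbeta}) and obtains uniqueness from the parabolic comparison principle, Theorem~\ref{teo_cp_p}, which involves no boundary information.

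Second, the boundary condition for the constructed solution is not a technical leftover but the core of the statement, and you leave it unproved. The equation plus the initial datum do not determine $u$: for $f\equiv1$ the constant function $u\equiv1$ is bounded, solves $u_t=\Delta_\beta u$ with $u(\cdot,0)=f$, and violates the boundary condition, while your truncation limit is a different candidate; so everything hinges on the decay along branches of the truncation limit (your function $v$), which you explicitly defer as ``the most delicate step.'' The splitting $f=f\chi_{\{|x|\le N\}}+f\chi_{\{|x|>N\}}$ does not help, because the tail has the same $L^\infty$ size as $f$, so you are back at the original difficulty. Closing the argument would require an actual barrier exploiting $\beta<\tfrac12$ --- e.g.\ a supersolution built from the positive $\beta$-superharmonic function $x\mapsto\p^{|x|}$, or a quantitative estimate showing that the zero condition imposed at level $n+1$ propagates inward in finite time uniformly in $n$ --- none of which is sketched. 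By contrast, the paper constructs $u$ as the supremum of subsolutions of the integral equation (a Perron-type argument) and never runs a truncation/boundary-decay argument; your route is genuinely different, but as written it is incomplete at precisely the two points above.
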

	
	We now study the asymptotic behavior of the solution.

	\begin{teo}\label{intro_teo_evo_1}
			Let $\beta\in (0,\tfrac12),$ and $f\in L^{\infty}(\T)$. Let $u$ be the solution 
			to \eqref{ec.evolucion}. Then, for any \(\lambda\in\A\) there
			exists a positive constant \(C=C(\lambda)\) such that
			\[
				|u(x,t)|\le C e^{-\lambda t} \quad 
					\forall (x,t)\in\T\times(0,\infty).
			\]
	\end{teo}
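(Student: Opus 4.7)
The plan is to construct an explicit exponentially decaying supersolution from the bounded positive subsolution furnished by membership of $\lambda$ in $\A$, and then bracket $u$ between it and its negative via a comparison principle.

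Fix $\lambda \in \A$. By definition, there exists $v \colon \T \to \R$ and constants $0 < c < C$ with $c < v(x) < C$ for every $x \in \T$, satisfying $\Delta_\beta v + \lambda v \le 0$, i.e.\ $-\Delta_\beta v \ge \lambda v$. Set $K \coloneqq \|f\|_{L^\infty(\T)}/c$ and define
\[
    w(x,t) \coloneqq K e^{-\lambda t} v(x), \qquad (x,t)\in \T\times[0,\infty).
\]
Since $\Delta_\beta$ is linear and acts only in the spatial variable, a direct computation yields
\[
    w_t(x,t) - \Delta_\beta w(x,t)
    = K e^{-\lambda t} \bigl(-\lambda v(x) - \Delta_\beta v(x)\bigr) \ge 0,
\]
so $w$ is a classical supersolution of the heat equation on $\T$. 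At $t=0$ we have $w(x,0) = K v(x) \ge K c = \|f\|_{L^\infty(\T)} \ge f(x)$, and along any branch $\{x_n\} \to y \in \partial\T$, $\lim_{n\to\infty} w(x_n,t) = K e^{-\lambda t} \lim_{n\to\infty} v(x_n) \ge 0$, so the boundary values of $w$ dominate those of $u$. By symmetry, $-w$ is a subsolution lying below $u$ at $t=0$ and on $\partial\T$.

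Next I would invoke the comparison principle for \eqref{ec.evolucion}, which I expect to have been established on the way to Theorem~\ref{intro_teo_exist_1}: if $W_1$ is a supersolution and $W_2$ a subsolution of the heat equation on $\T$, both in $L^{\infty}_{loc}(\T\times[0,\infty))$, with $W_2 \le W_1$ at $t=0$ and in the limit along every branch, then $W_2 \le W_1$ throughout $\T \times [0,\infty)$. Applying this twice gives $-w \le u \le w$, hence
\[
    |u(x,t)| \le K\,C\, e^{-\lambda t} \qquad \forall\,(x,t)\in\T\times(0,\infty),
\]
which is the desired bound with $C(\lambda) = KC = C\|f\|_{L^\infty(\T)}/c$.

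The main obstacle is the comparison step rather than the construction of $w$. The weights $\p^{-|x|}$ appearing in $\Delta_\beta$ blow up as $|x|\to\infty$ when $\beta<\tfrac12$, so one cannot naively argue by locating an interior maximum of $u-w$; instead one must use that $u\in L^\infty_{loc}$ and that $w$ is uniformly bounded above and below by positive constants to exclude escape of the supremum of $u-w$ either to $t\to\infty$ or along a branch to $\partial\T$. Concretely, given $T>0$, one would consider $\sup_{\T\times[0,T]}(u-w)$, show that on each truncated subtree $\{|x|\le N\}$ the parabolic maximum principle places the extremum on the parabolic boundary, and then pass to the limit $N\to\infty$ using the boundary condition $\lim_{x\to y}u(x,t)=0$ together with $\inf_{\T} w(\cdot,t) = K c e^{-\lambda t}>0$ to discard contributions from $|x|\to\infty$; taking $T\to \infty$ concludes. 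Once this comparison principle is in hand, the exponential decay follows cleanly from the construction above.
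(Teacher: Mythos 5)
Your proposal is correct and follows essentially the same route as the paper: the paper's proof of this theorem also takes $v$ from the definition of $\A$, sets $w(x,t)=\frac{\|f\|_\infty}{c}e^{-\lambda t}v(x)$, checks $w_t-\Delta_\beta w\ge 0$ and $w(x,0)\ge f(x)$, and concludes via the already-established Parabolic Comparison Principle (Theorem \ref{teo_cp_p}), applied likewise to $-w$. The comparison principle you anticipate is indeed available in the paper (stated through the operator $\mathcal{K}_\beta^f$), so your sketched reconstruction of its proof is not needed.
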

	
	Under an additional assumption on the initial data \(f\), we can show that the solution decays at the optimal rate \(e^{-\lambda_1(\beta)t}.\)
	
	\begin{teo}\label{intro_teo_evo_2}
			Let $\beta\in (0,\tfrac12),$ and let $f\in L^{\infty}(\T)$ be such that 
			there exists \(K>0\) satisfying
			\[
				|f(x)|\le K v(x) \quad \forall x\in \T
			\]
			where \(v\) is the positive eigenfunction associated with \(\lambda_1(\beta)\)
			normalized by \(v(\emptyset)=1.\)
			Then, $u$, the solution 
			to \eqref{ec.evolucion}, satisfies
			\[
				|u(x,t)|\le K e^{-\lambda_1(\beta)t} \quad 
					\forall (x,t)\in\T\times(0,\infty).
			\]
	\end{teo}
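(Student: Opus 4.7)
The plan is to construct an explicit dominating solution built from the principal eigenfunction and then invoke a comparison argument. Specifically, let $v$ be the positive eigenfunction given by Theorem \ref{primer_teorema}, normalized so that $v(\emptyset)=1$, and define
\[
w(x,t) \coloneqq K\, v(x)\, e^{-\lambda_1(\beta) t}.
\]
A direct computation shows that $w$ is an exact solution of the heat equation in \eqref{ec.evolucion}: since $-\Delta_\beta v = \lambda_1(\beta) v$ and $\Delta_\beta$ acts only on the spatial variable,
\[
w_t(x,t) - \Delta_\beta w(x,t)
= K e^{-\lambda_1(\beta)t}\bigl(-\lambda_1(\beta) v(x) - \Delta_\beta v(x)\bigr) = 0.
\]
Moreover, since $v$ satisfies the Dirichlet boundary condition $\lim_{x\to y}v(x)=0$ at every $y\in\partial\T$ (it is the eigenfunction of \eqref{prob.autovalores.def}), $w$ inherits the same boundary behaviour for every $t>0$. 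Finally, the hypothesis on $f$ gives $-w(x,0) \le f(x) \le w(x,0)$ for all $x\in \T$.

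Next, I would apply a comparison principle to the functions $u-w$ and $u+w$, which both solve the homogeneous heat equation with homogeneous Dirichlet boundary data and whose initial traces are non-positive and non-negative, respectively. Such a comparison principle is essentially the uniqueness ingredient used in Theorem \ref{intro_teo_exist_1}: if $z\in L^\infty_{loc}(\T\times[0,\infty))$ solves $z_t-\Delta_\beta z=0$ with vanishing Dirichlet boundary condition and $z(\cdot,0)\le 0$, then $z\le 0$ on $\T\times[0,\infty)$ (and similarly for the reverse inequality). Applied here, this yields
\[
-K\, v(x)\, e^{-\lambda_1(\beta) t} \le u(x,t) \le K\, v(x)\, e^{-\lambda_1(\beta) t} \qquad \forall (x,t)\in\T\times(0,\infty).
\]

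To conclude the stated estimate it remains to bound $v$. By Theorem \ref{primer_teorema}, $v$ is constant on each level and decreasing in $|x|$; combined with the normalization $v(\emptyset)=1$ this yields $0< v(x)\le 1$ for every $x\in\T$. Therefore
\[
|u(x,t)| \le K v(x) e^{-\lambda_1(\beta) t} \le K e^{-\lambda_1(\beta) t},
\]
which is the desired bound. The only delicate step is the comparison principle: because $\T$ is unbounded and the boundary is understood in the limit-along-branches sense, one must check that the standard argument (peeling off the maximum of $z$ at a vertex and propagating the inequality via the pointwise structure of $\Delta_\beta$, or equivalently using the probabilistic/Feynman--Kac representation in the spirit of \cite{DPMR3,DPFR1}) goes through in our setting. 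This is essentially the same tool used to prove uniqueness in Theorem \ref{intro_teo_exist_1}, so I expect it to be the main technical point but not to present any substantially new difficulty.
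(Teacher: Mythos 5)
Your proposal is correct and follows essentially the same route as the paper, which proves this result "analogously to the preceding theorem": compare $u$ with $\pm K e^{-\lambda_1(\beta)t}v$ via the Parabolic Comparison Principle and use that $v$ is level-constant, non-increasing and normalized so $v\le v(\emptyset)=1$. The comparison step you flag as delicate is exactly the already-established Theorem \ref{teo_cp_p} (stated through the operator $\mathcal{K}_\beta^f$), so no additional work is needed there.
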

	

\subsection*{The paper is organized as follows} 

In Section \ref{cp}, we establish the comparison principles for both the elliptic and parabolic problems. In Section \ref{sppe}, we prove that any eigenvalue \(\lambda\) is positive.
Moreover, if \(\lambda\) is a  principal eigenvalue, then \(\lambda \le 1\) if \(\beta=0\) or
\(\beta\in [\frac12,1)\) and  \(\lambda < 1\) if \(\beta\in (0,\frac12)\).  
In Section \ref{cpe1}, we present the proofs of  
Theorems \ref{Intro casobeta=0} and \ref{intro_casobeta>12}.
Then, in Section \ref{cpe2}, we establish the main result of this article: 
Theorem \ref{primer_teorema}, along with Proposition \ref{intro_prop_cota}.
Finally, in Section \ref{eq}, we prove Theorems \ref{intro_teo_evo_1} and \ref{intro_teo_evo_2} concerning the associated evolution problem.

\section{Comparison Principles}
\label{cp}
	In this section, we establish comparison principles for both elliptic and 
	parabolic problems associated with $\Delta_\beta u$. 
	We first prove a maximum principle.

	\begin{teo}[Maximum Principle]
	\label{teo_mp} 
		Let $\beta \in [0, 1)$ and $u\colon\T\to\R$ be a function such 
		that 
		\begin{align}
			-\Delta_\beta u (x) &\ge 0 \quad \text{in } \T, \label{eq-mp1}\\
			\liminf_{x\to y} u(x) &\ge 0 \quad \text{for } y \in \partial\T. 
			\label{eq-mp2}
		\end{align}
		Then, 
		\[
			 u(x) \geq 0 \qquad \mbox{ for all } x \in \T.
		\]
		Moreover, if $\beta > 0$, then either $u(x) > 0$ in $\T$ or $u \equiv 0$ 
		in $\T$.
	\end{teo}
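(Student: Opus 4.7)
The plan is to prove the weak minimum principle ($u\ge 0$) by contradiction: assuming some $x_{*}\in\T$ has $u(x_{*})<0$, I will construct a full branch of $\T$ on which $u$ stays $\le u(x_{*})<0$, contradicting $\liminf_{x\to y}u(x)\ge 0$. First I look at the finite path of ancestors of $x_{*}$ (from $\emptyset$ down to $x_{*}$) and pick a vertex $x$ on this path where $u$ attains its minimum; then $u(x)\le u(x_{*})<0$, and when $x\neq\emptyset$ the parent $\hat x$ lies on the same path, giving $u(\hat x)\ge u(x)$. Combining this with the supermean inequality $u(x)\ge \beta u(\hat x)+\tfrac{1-\beta}{m}\sum_{i}u(x,i)$ (or, when $x=\emptyset$, with $u(\emptyset)\ge \tfrac1m\sum_{i}u(\emptyset,i)$) yields $\tfrac1m\sum_{i}u(x,i)\le u(x)$, so some child $x^{(1)}$ of $x$ satisfies $u(x^{(1)})\le u(x)$.

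The descent then iterates: at $x^{(1)}$ the parent is $x$, and $u(x)\ge u(x^{(1)})$ by construction, so the same argument produces $x^{(2)}$ with $u(x^{(2)})\le u(x^{(1)})$, and so on. One thus obtains an infinite descending path $x=x^{(0)},x^{(1)},x^{(2)},\dots$ with $u(x^{(k)})\le u(x)<0$ for every $k$. Prefixing the segment $\emptyset\to x$ turns this into a branch of $\T$ along which $\liminf_{k}u(x^{(k)})\le u(x)<0$, contradicting the boundary hypothesis and proving $u\ge 0$.

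For the strong minimum principle when $\beta>0$, suppose $u\ge 0$ and $u(\bar x)=0$ at some $\bar x\in\T$. The supermean inequality becomes an equality there: if $\bar x=\emptyset$, then $0=u(\emptyset)\ge \tfrac1m\sum_{i}u(\emptyset,i)\ge 0$ forces $u(\emptyset,i)=0$ for every $i$; if $\bar x\neq\emptyset$, then $0=u(\bar x)\ge \beta u(\hat{\bar x})+\tfrac{1-\beta}{m}\sum_{i}u(\bar x,i)\ge 0$ with both summands nonnegative and $\beta>0$ forces $u(\hat{\bar x})=0$ and $u(\bar x,i)=0$ for each $i$. Thus the zero set of $u$ is closed under taking parents and children, and by the connectedness of $\T$ it coincides with all of $\T$.

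The step that I expect to require the most care is ensuring the descent persists indefinitely without any $L^{\infty}$ bound on $u$: the inequality $\tfrac1m\sum_i u(x,i)\le u(x)$ only follows from the supermean relation once $u(\hat x)\ge u(x)$ is known, which is exactly why $x$ must be chosen as a minimizer along the finite ancestor path of $x_{*}$; after the first descent step the analogous condition at $x^{(k)}$ is automatic because its parent is $x^{(k-1)}$ with $u(x^{(k-1)})\ge u(x^{(k)})$. In particular, neither boundedness nor the existence of a global infimum of $u$ is invoked.
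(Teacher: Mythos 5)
Your proof is correct and follows essentially the same route as the paper: a descent along successors producing a branch on which $u$ stays below a fixed negative value, contradicting the boundary $\liminf$, followed by propagation of the zero set through parents and children when $\beta>0$. The only differences are cosmetic: you start the descent at a minimizer of $u$ along the ancestor path of $x_{*}$ (so $u(\hat x)\ge u(x)$), whereas the paper starts at the first level where $u$ becomes negative (so $u(\hat x_0)\ge 0$), and your argument also handles $\beta=0$ directly, a case the paper delegates to a citation.
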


	\begin{proof}
		The case $\beta = 0$ follows directly from \cite[Lemma 3.3]{DPMR2}.

		Assume now that $\beta > 0$.

		\noindent{\it Step 1: \(u\) is non-negative}. 
		Assume for contradiction that there exists \(x\in\T\) such that 
		\[
			u(x)<0.
		\]
		Define
		\[
			k_0\coloneqq \min\Big\{k\in\N_0\colon \exists x\in\T 
			\text{ such that } |x|=k \text{ and } u(x)<0\Big\},
		\]
		Choose \(x_0\in\T\) such that \(|x_0|=k_0\) and \(u(x_0)<0.\)
		We distinguish two cases:
		\begin{itemize}
			\item If \(x_0=\emptyset\): Using
			\[
				0\le -\Delta_\beta u(\emptyset)
				=u(\emptyset)-\frac1m\sum_{i=0}^{m-1} 
				u(\emptyset,i)
			\] 
			and \(u(\emptyset)<0,\)
			there exists \(i\in\{0,1,\dots,m-1\}\) such that
			\[
				u(\emptyset,i)\le u(\emptyset)< 0.
			\]
			\item If \(x_0\neq\emptyset\): Using
			\[
				0\le -\Delta_\beta u(x_0)
				=\left((\beta(u(x_0)-u(\hat{x}_0))
				+(1-\beta)(u(x_0)-\frac1m\sum_{i=0}^{m-1} u(x_0,i))
				\right)p_\beta^{-|x_0|},
			\] 
			and since it holds that \(u(\hat{x}_0)\ge 0\) and \(u(x_0)<0,\) 
			there exists 
			\(i\in\{0,1,\dots,m-1\}\) such that
			\[
				u(x_0,i)\le u(x_0)< 0.
			\]
		\end{itemize}
		
		In either case, there exists a successor \(x_1\)  of \(x_0\) such that
		\[
				u(x_1)\le u(x_0)< 0.
		\]
		
		Now, using 
		\[
				0\le -\Delta_\beta u(x_1)
				=\left(\beta(u(x_1)-u(x_0))
				+(1-\beta)\left(u(x_1)-\frac1m\sum_{i=0}^{m-1} u(x_1,i)\right)
				\right)p_\beta^{-|x_1|},
		\] 
		and that \(u(x_1)\le u(x_0)< 0,\) we obtain that  
		there is a successor \(x_2\) of \(x_1\) such that
		\[
				u(x_2)\le u(x_1)< 0.
		\]
		
		By iterating this argument, we construct a sequence
		\(\{x_n\}_{n\ge0}\) in  \(\T\) 
		such that \(x_{n+1}\) is a 
		successor of \(x_n\) and
		\[
				u(x_{n+1})\le u(x_{n})< 0 \qquad \text{ for any } n\in\N_0.
		\]
				
		Then \(x_n\to y\in \partial\T\) and
		\[
			\lim_{n\to\infty} u(x_n)<0
		\]
		which contradicts \eqref{eq-mp2}. Thus, we have proved that \(u(x)\ge 0\) in \(\T.\)
		
		\noindent{\it Step 2: Reaching \(u=0.\)} 
		Assume that
		$0$ is attained at $z \in \T$, \emph{i.e.}, $u(z) = 0$. 
		Using \eqref{eq-mp1}, we have:
		\[
			0 \le -\Delta_\beta u(z) = 
			-\left( \beta (u(\hat{z}) - u(z)) + 
			\frac{1-\beta}{m} \sum_{i=0}^{m-1} (u(z,i) - u(z)) \right) \p^{-|z|} 
			\le 0,
		\]
		if $z \neq \emptyset$, and
		\[
			0 \le -\Delta_\beta u(\emptyset) = 
			-\left( 
				\frac{1}{m} \sum_{i=0}^{m-1} u(\emptyset,i) - u(\emptyset) 
			\right) \le 0,
		\]
		if $z = \emptyset$. 
		In either case, $\Delta_\beta u(z) = 0$. Since $u(z) = 0$ and 
		$\beta \neq 0$, it follows that $u(x) = 0$ for all $x \in \T$. 
		Consequently, $u(x)$ must be constant, and  thus either $u(x) > 0$ in $\T$ or $u(x) \equiv 0$ in $\T$.
	\end{proof}

		The following result is a direct consequence of the Maximum Principle and provides a comparison principle for the elliptic problem. For related results, see \cite{ary,DPFR1,DPMR2}.

	\begin{teo}[Comparison Principle]
		\label{teo_cp_e}
		Let $\beta \in [0, \tfrac{1}{2})$ and $u,v \colon \T \to \R$ be bounded 
		functions such that 
		\begin{align}
			-\Delta_\beta u (x) &\ge -\Delta_\beta v (x) \quad \text{in } \T, 
			\label{eq:cp1}\\
			\liminf_{x \to y} u(x) &\ge \limsup_{x \to y} v(x) \quad \text{for } y 
			\in \partial\T. \label{eq:cp2}
		\end{align}
		Then, 
		\[
			u(x) \ge v(x) \quad \text{in } \T.
		\]
		Moreover, if $\beta > 0$, then either $u(x) > v(x)$ in $\T$, or 
		$u(x) \equiv v(x)$ in $\T$.
	\end{teo}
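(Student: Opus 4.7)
The plan is to reduce the comparison principle to the Maximum Principle (Theorem \ref{teo_mp}) by considering the difference $w \coloneqq u - v$ and checking that $w$ satisfies the hypotheses of that theorem.

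First, I would exploit the linearity of $\Delta_\beta$ (immediate from its definition, since both cases $x = \emptyset$ and $x \neq \emptyset$ are linear combinations of values of the argument): from \eqref{eq:cp1} I get
\[
-\Delta_\beta w(x) = -\Delta_\beta u(x) + \Delta_\beta v(x) \ge 0 \quad \text{in } \T,
\]
which is hypothesis \eqref{eq-mp1} for $w$. For the boundary condition \eqref{eq-mp2}, I use that $u$ and $v$ are bounded, so for any sequence $\{x_n\} \subset \T$ converging to $y \in \partial\T$ the quantities $\liminf u(x_n)$ and $\limsup v(x_n)$ are finite, and the standard inequality $\liminf(a_n - b_n) \ge \liminf a_n - \limsup b_n$ combined with \eqref{eq:cp2} gives
\[
\liminf_{x \to y} w(x) \ge \liminf_{x \to y} u(x) - \limsup_{x \to y} v(x) \ge 0
\]
for every $y \in \partial \T$.

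Now Theorem \ref{teo_mp} applied to $w$ yields $w(x) \ge 0$ on $\T$, that is, $u(x) \ge v(x)$ in $\T$. For the refined dichotomy in the case $\beta > 0$, I invoke the second statement of Theorem \ref{teo_mp}: either $w(x) > 0$ everywhere on $\T$, or $w \equiv 0$ on $\T$, which translated back means either $u(x) > v(x)$ for all $x \in \T$, or $u \equiv v$ on $\T$.

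There is no substantial obstacle in this argument: it is a direct linearization reducing to Theorem \ref{teo_mp}. The only step deserving brief care is the boundary inequality, where boundedness of $u$ and $v$ is essential to avoid indeterminate forms of the type $\infty - \infty$ when passing from the separate liminf/limsup hypotheses on $u$ and $v$ to a single liminf condition on $w$.
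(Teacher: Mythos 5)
Your proof is correct and follows exactly the route the paper intends: the paper gives no separate argument for Theorem \ref{teo_cp_e}, stating only that it is a direct consequence of the Maximum Principle, and your reduction via $w = u - v$, using linearity of $\Delta_\beta$ and the boundedness of $u,v$ to handle the boundary liminf, together with the dichotomy from Theorem \ref{teo_mp} for $\beta>0$, is precisely that argument.
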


	Before stating the Comparison Principle for the parabolic case, we reformulate the evolution equation \eqref{ec.evolucion}. 
	Observe that $u \in L_{loc}^\infty(\T \times [0, \infty))$ solves 	
	\eqref{ec.evolucion} if and only if 
	\[
    	u(x,t) = \mathcal{K}_\beta^f u(x,t),
	\]
	where 
	\begin{equation}
	\label{Kbeta}
		\mathcal{K}_\beta^f u(x,t) \coloneqq e^{-t\p^{-|x|}}f(x) + 
		\int_0^t e^{(s-t)\p^{-|x|}} \big( \Delta_\beta u(x,s) + 
		\p^{-|x|}u(x,s) \big) \, ds.
	\end{equation}

	\begin{teo}[Parabolic Comparison Principle]
		\label{teo_cp_p}
		Let $\beta \in [0, \tfrac{1}{2})$ and $f,g \in L^\infty(\T)$. If 
		\[
			f(x) \ge g(x) \quad \text{in } \T,
		\]
		and $u,v \in L_{loc}^\infty(\T \times [0, \infty))$ satisfy 
		\[
			u(x,t) \ge \mathcal{K}_\beta^f u(x,t) \quad \text{and} \quad v(x,t) \le 
			\mathcal{K}_\beta^g v(x,t) \quad \text{in } \T \times [0, \infty),
		\]
		then 
		\[
			u(x,t) \ge v(x,t) \quad \text{in } \T \times [0, \infty).
		\]
	\end{teo}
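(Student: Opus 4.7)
The strategy is to linearize by setting $w := v - u$, use positivity of the underlying integral operator to reduce to the positive part, and then conclude separately for $\beta = 0$ and $\beta \in (0, \tfrac{1}{2})$.

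Since $(f, u) \mapsto \mathcal{K}_\beta^f u$ is linear in both arguments, subtracting $u \geq \mathcal{K}_\beta^f u$ from $v \leq \mathcal{K}_\beta^g v$ together with $f \geq g$ yields
\[
 w(x, t) \leq Lw(x, t) := \int_0^t e^{(s-t) p_\beta^{-|x|}} \mathcal{A} w(x, s) \, ds,
\]
where $\mathcal{A} w(x, s) := \Delta_\beta w(x, s) + p_\beta^{-|x|} w(x, s)$ is a positive linear combination of the values of $w$ at the parent and children of $x$ (with total weight $p_\beta^{-|x|}$). Since $L$ is positivity preserving, one checks that $w_+$ also satisfies $w_+ \leq L w_+$ (at any point where $Lw < 0$ the inequality is trivial; otherwise $Lw \leq L w_+$ and $0 \leq L w_+$ together give the claim). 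Fix $T > 0$ and set $M := \|w_+\|_{L^\infty(\T \times [0, T])} < \infty$. For $\beta = 0$, $p_\beta^{-|x|} \equiv 1$ and a direct estimate yields $w_+(x, t) \leq M(1 - e^{-t}) \leq M(1 - e^{-T})$ uniformly in $x$, whence $M \leq M(1 - e^{-T})$ and so $M = 0$.

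For $\beta \in (0, \tfrac{1}{2})$ the bound $w_+(x, t) \leq M(1 - e^{-t p_\beta^{-|x|}})$ does not contract because the right-hand side tends to $M$ as $|x| \to \infty$. I would overcome this by a perturbation argument: for each $\epsilon > 0$, introduce the affine supersolution $\phi_\epsilon(x, t) := \epsilon(1 + t)$, for which a direct computation (using $\Delta_\beta \phi_\epsilon \equiv 0$) yields
\[
 \phi_\epsilon(x, t) - \mathcal{K}_\beta^{\epsilon} \phi_\epsilon(x, t) = \epsilon \, p_\beta^{|x|} \big(1 - e^{-t p_\beta^{-|x|}}\big) > 0.
\]
Combining with the hypotheses shows that $w^\epsilon := w - \phi_\epsilon$ satisfies $w^\epsilon(\cdot, 0) \leq -\epsilon$ and a strict version of $w^\epsilon \leq L w^\epsilon$; an iteration argument exploiting the level structure of $\T$ then forces $w^\epsilon \leq 0$ on $\T \times [0, T]$. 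Letting $\epsilon \to 0^+$ gives $w \leq 0$, i.e., $v \leq u$.

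The main obstacle is precisely this last iteration for $\beta > 0$: the strict excess $\epsilon p_\beta^{|x|}(1 - e^{-t p_\beta^{-|x|}})$ supplied by the barrier itself degenerates as $|x| \to \infty$ (since $p_\beta < 1$), so a single sup-of-deficit argument does not close. One must either propagate the genuine contraction factor $(1 - e^{-T})$ available at the root level outwards through the levels of $\T$, using the explicit form of the kernel of $L$; or truncate to $\{|x| \leq N\}$, invoke the elliptic comparison principle (Theorem \ref{teo_cp_e}) on each time slice after a suitable reformulation, and pass to the limit $N \to \infty$ using the boundedness of $w$ on $\T \times [0, T]$.
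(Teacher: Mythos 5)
Your reduction to $w:=v-u$, the inequality $w\le Lw$ with $Lw(x,t)=\int_0^t e^{(s-t)\p^{-|x|}}\bigl(\Delta_\beta w(x,s)+\p^{-|x|}w(x,s)\bigr)\,ds$, the passage to $w_+$, and the contraction estimate for $\beta=0$ are all correct; in fact this is essentially the argument of the reference the paper invokes (the cited proof in \cite{DPMR3} concerns exactly the case $\beta=0$, where the kernel has total mass $1$ at every node and $M\le M(1-e^{-T})$ closes the proof). The problem is that for $\beta\in(0,\tfrac12)$ --- the only range in which the paper actually applies Theorem \ref{teo_cp_p} (uniqueness for \eqref{ec.evolucion}, the lower bound for eigenvalues, Theorems \ref{teo_evo_1} and \ref{teo_evo_2}) --- you do not give a proof: you introduce the barrier $\epsilon(1+t)$, correctly observe that its excess $\epsilon\,\p^{|x|}(1-e^{-t\p^{-|x|}})$ degenerates as $|x|\to\infty$, and then only name two possible strategies (propagating the root-level contraction outward, or truncating at level $N$ and invoking Theorem \ref{teo_cp_e}) without carrying either one out. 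That is a genuine gap, not a verification left to the reader.

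Moreover, the gap cannot be closed from the hypotheses exactly as you use them, because they contain no information about the behaviour of $u,v$ along $\partial\T$, and for $\beta>0$ the conclusion really requires such information. Indeed, look for a radial function $\phi(x)=\psi(|x|)$ with $\Delta_\beta\phi=\mu\phi$ and $\mu>0$: the level recurrence is $(1-\beta)\psi(k+1)=(1+\mu\p^{k})\psi(k)-\beta\psi(k-1)$ with $\psi(0)=1$, $\psi(1)=1+\mu$, and its increments satisfy $d_k=\p\, d_{k-1}+\tfrac{\mu}{1-\beta}\p^{k}\psi(k)$, so $\psi$ is positive, increasing and (since $\sum_k k\p^{k}<\infty$) bounded; of course $\phi$ does not tend to $0$ along branches. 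Then $v(x,t)=e^{\mu t}\phi(x)$ and $u\equiv\sup_\T\phi$ are exact fixed points of $\mathcal{K}_\beta^{\phi}$ and $\mathcal{K}_\beta^{\sup\phi}$ with ordered initial data, yet $v>u$ at the root for large $t$. Consequently, neither of your proposed routes can succeed without an extra input: the truncation route needs an ordering of $w$ on the artificial boundary $\{|x|=N\}$ (this is precisely the hypothesis $\liminf u\ge\limsup v$ that Theorem \ref{teo_cp_e} requires and that you cannot get for free), and the propagation route fails because on a fixed time interval the coupling of deep levels to the root produces no uniform contraction. To complete the argument you must either restrict to $\beta=0$, or bring in a hypothesis on $\limsup_{x\to y}\bigl(v(x,t)-u(x,t)\bigr)$ for $y\in\partial\T$ (which the sub/supersolutions in the paper's applications do satisfy) and then use it where your barrier degenerates.
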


	\begin{proof}
		The proof follows the same lines as the proof of \cite[Theorem 1.2]{DPMR3}.
	\end{proof}

\section{Some properties of the eigenvalues}
\label{sppe}

    In this section, we study fundamental properties of the eigenvalues and their corresponding eigenfunctions.
Recall that $\lambda\in\mathbb{R}$ is an eigenvalue of $\Delta_\beta$ if there exists a bounded function $u\colon\T\to\mathbb{R}$ (an eigenfunction) such that $u\not\equiv 0$ and
	\begin{equation}
		\label{prob.autovalores.def-99}
		\begin{cases}
			-\Delta_\beta u =\lambda u &\text{on } \T,\\
			\lim\limits_{x\to y} u(x)=0 & \text{for } y\in \partial\T.
		\end{cases}
	\end{equation}
	An eigenvalue $\lambda$ is called a principal eigenvalue if it admits a non-trivial non-negative eigenfunction.

    \begin{lem}
        \label{lema_fe_1}
        Let \(\lambda\) be an eigenvalue of \(\Delta_\beta\). 
        Then, \(\lambda > 0\).
    \end{lem}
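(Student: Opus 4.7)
My plan is to argue by contradiction. Suppose $\lambda \le 0$ is an eigenvalue with a nontrivial eigenfunction $u$; I will show $u \equiv 0$. The key observation is that at every vertex $x$ with $u(x) < 0$ one has
\[
-\Delta_\beta u(x) \;=\; \lambda\, u(x) \;\ge\; 0,
\]
since $\lambda \le 0$ and $u(x) < 0$. This is exactly the one-sided hypothesis needed to run the descent construction from Step~1 of the Maximum Principle (Theorem~\ref{teo_mp}) on the set $\{u < 0\}$, even though the global hypothesis $-\Delta_\beta u \ge 0$ on all of $\T$ is not available.

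The first step is to conclude $u \ge 0$ on $\T$. Assume for contradiction that $u(x_*) < 0$ for some $x_*$, set $k_0 \coloneqq \min\{|x| : u(x) < 0\}$, and pick $x_0$ realizing the minimum, so that either $x_0 = \emptyset$ or $u(\hat{x}_0) \ge 0 > u(x_0)$. The inequality $-\Delta_\beta u(x_0) \ge 0$ then forces some successor $x_1$ of $x_0$ with $u(x_1) \le u(x_0) < 0$, because the ancestor contribution $\beta(u(x_0) - u(\hat{x}_0))$ is non-positive and the successor average must therefore be at most $u(x_0)$. Iterating, using $\hat{x}_n = x_{n-1}$ and the inductive bound $u(x_{n-1}) \ge u(x_n)$ to keep the ancestor term non-positive at every stage, one produces an infinite branch $\{x_n\}_{n \ge 0}$ along which $u(x_n) \le u(x_0) < 0$. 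This contradicts the Dirichlet boundary condition along that branch, proving $u \ge 0$ on $\T$. Since $-u$ is also an eigenfunction associated with $\lambda$ (the eigenvalue equation is linear), applying the same argument to $-u$ yields $u \le 0$ on $\T$, and hence $u \equiv 0$, contradicting that $u$ is an eigenfunction.

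I do not expect a serious obstacle: the whole proof is two applications of the descent argument already established in Theorem~\ref{teo_mp}, one to $u$ and one to $-u$. The only point to verify carefully is the inductive sign condition $u(\hat{x}_n) \ge u(x_n)$ throughout the chain construction (trivial for $n = 0$ by minimality of $k_0$, and automatic for $n \ge 1$ from the previous step), which is what keeps the ancestor contribution in $-\Delta_\beta u(x_n)$ non-positive and allows one to select a successor $x_{n+1}$ with $u(x_{n+1}) \le u(x_n)$.
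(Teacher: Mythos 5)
Your proof is correct and is essentially the same argument as the paper's: the paper runs the identical level-minimizing descent construction on the set where $u>0$ to conclude $u\le 0$ and then applies it to $-u$, while you run it on $\{u<0\}$ to get $u\ge 0$ first — a mirror image of the same proof, with the same key point that $\lambda u(x)\ge 0$ wherever the sign condition holds and the same inductive control of the ancestor term along the chain.
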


    \begin{proof}
	 	We argue by contradiction. Suppose that \(\lambda \leq 0\).

        We claim that any eigenfunction \(u\) associated with \(\lambda  \leq 0\) 
        satisfies \(u \leq 0\) in \(\T\).

        Assuming this assertion, since \(-u\) is also an eigenfunction corresponding to 
        \(\lambda \leq 0\), it follows that \(-u \leq 0\) in \(\T\), 
        which leads to a contradiction \(u \equiv 0\).

        To prove the claim, suppose that
        \[
           \sup_{x \in \T} u(x) > 0
        \]
        where \(u\) is the corresponding eigenfunction.
        Define
        \[
			k_0\coloneqq \min \Big\{k\in\N_0\colon \exists x\in\T 
			\text{ such that }|x|=k \text{ and } u(x)>0\Big\}.
		\]
		Choose \(x_0\in\T\) such that \(|x_0|=k_0\) and \(u(x_0)>0.\)
		We distinguish two cases:
		\begin{itemize}
			\item If \(x_0=\emptyset\): Using
			\[
				0\ge\lambda u(\emptyset)\ge -\Delta_\beta u(\emptyset)
				=u(\emptyset)-\frac1m\sum_{i=0}^{m-1} u(\emptyset,i)
			\] 
			and \(u(\emptyset)>0,\) it follows that there exists \(i\in\{0,1,\dots,m-1\}\) such that
			\[
				u(\emptyset,i)\ge u(\emptyset)>0.
			\]
			\item If \(x_0\neq\emptyset\): Using
			\[
				0\ge \lambda u(x_0)=-\Delta_\beta u(x_0)
				=\left((\beta(u(x_0)-u(\hat{x}_0))
				+(1-\beta)(u(x_0)-\frac1m\sum_{i=0}^{m-1} u(x_0,i))
				\right)p_\beta^{-|x_0|},
			\] 
			and that \(u(\hat{x_0})\le 0\) and \(u(x_0)>0\), there exists 
			\(i\in\{0,1,\dots,m-1\}\) such that
			\[
				u(x_0,i)\ge u(x_0)> 0.
			\]
		\end{itemize}
		
		In either case, there exists a successor \(x_1\)  of \(x_0\) such that
		\[
				u(x_1)\ge u(x_0)>0 .
		\]
		
		Now, using 
		\[
				0\ge \lambda u(x_1)= -\Delta_\beta u(x_1)
				=\left(\beta(u(x_1)-u(x_0))
				+(1-\beta)\left(u(x_1)-\frac1m\sum_{i=0}^{m-1} u(x_1,i)\right)
				\right)p_\beta^{-|x_1|},
		\] 
		and	\(u(x_1)\ge u(x_0)> 0,\) we have that  
		there exists a successor \(x_2\) of \(x_1\) such that
		\[
				u(x_2)\ge u(x_1)> 0.
		\]
		
		By repeating this argument inductively, we construct a sequence 
		\(\{x_n\}_{n\ge0}\) in  \(\T\) 
		such that \(x_{n+1}\) is a 
		successor of \(x_n\) and
		\[
				u(x_{n+1})\ge u(x_{n})> 0 \qquad \text{ for any } n\in\N_0.
		\] 
		
		Since \(x_n\to y\in \partial\T\) it follows that
		\[
			\lim_{n\to\infty} u(x_n)>0
		\]
		contradicting the boundary condition. Thus \(u(x)\le 0\) in \(\T.\)
    \end{proof}
    
     When \(\beta>0\), a consequence of the Maximum Principle 
    (Theorem \ref{teo_mp}) and Lemma \ref{lema_fe_1} is that any non-negative eigenfunction \(u\) 
    corresponding to a principal eigenvalue \(\lambda > 0\) is positive in \(\T\). Indeed, since 
    \(-\Delta_\beta u = \lambda u \geq 0\), and \(\beta > 0\), there are two possibilities: either \(u(x) > 0\) for all \(x \in \T\), or \(u(x) = 0\) at some point. However, the latter contradicts the assumption \(u \not\equiv 0\). 
    Thus, \(u(x)>0\) in \(\T\).

    \begin{lem}
        \label{lema_fe_2} 
        Let $\lambda$ be a principal eigenvalue of $\Delta_\beta$. Then \(\lambda\le 1.\) Moreover, if \(\beta\in(0,\frac12),\) then \(\lambda<1.\) 
    \end{lem}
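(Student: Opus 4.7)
The plan is to read off the inequality directly from the eigenvalue equation evaluated at the root $\emptyset$. Note that the definition of $\Delta_\beta$ at $x=\emptyset$ does not involve $\beta$, so for any $\beta\in[0,1)$ the equation $-\Delta_\beta u = \lambda u$ at the root reads
\[
 (1-\lambda)\,u(\emptyset) = \frac{1}{m}\sum_{i=0}^{m-1} u(\emptyset,i).
\]
Thus once we know $u(\emptyset)>0$, non-negativity of the right-hand side gives $\lambda\le 1$, and strict positivity of the right-hand side gives $\lambda<1$.

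First I would establish that $u(\emptyset)>0$ for any non-negative eigenfunction $u$. When $\beta>0$, this is immediate from the Maximum Principle (Theorem \ref{teo_mp}): the hypotheses $-\Delta_\beta u=\lambda u\ge 0$, $u\ge 0$, and $\lim_{x\to y}u(x)=0$ on $\partial\T$ are satisfied, and since $u\not\equiv 0$ the alternative forces $u>0$ throughout $\T$. When $\beta=0$, I would argue by contradiction: if $u(\emptyset)=0$, the root identity above forces $\sum_{i} u(\emptyset,i)=0$, and by non-negativity each $u(\emptyset,i)=0$. The eigenvalue equation at any $x\neq\emptyset$ with $\beta=0$ reduces to $(1-\lambda)u(x)=\frac{1}{m}\sum_i u(x,i)$, so the same argument propagates the zero level by level, yielding $u\equiv 0$, contradicting that $u$ is an eigenfunction.

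With $u(\emptyset)>0$ in hand, the bound $\lambda\le 1$ follows from the displayed identity and the inequality $\frac{1}{m}\sum_i u(\emptyset,i)\ge 0$. For the strict inequality when $\beta\in(0,\tfrac12]$, one uses that $\beta>0$ so the Maximum Principle gives $u>0$ on all of $\T$; in particular $u(\emptyset,i)>0$ for every $i$, hence $\frac{1}{m}\sum_i u(\emptyset,i)>0$, and the root equation yields $(1-\lambda)u(\emptyset)>0$, whence $\lambda<1$.

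The proof is essentially a one-line computation once the positivity of $u(\emptyset)$ is secured, so the only genuinely delicate point is the iterative propagation-of-zero argument needed to rule out $u(\emptyset)=0$ in the degenerate case $\beta=0$, where the Maximum Principle does not promise strict positivity. I do not anticipate any other obstacle.
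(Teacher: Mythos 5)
Your argument is correct, and for $\beta\in(0,\tfrac12)$ it coincides with the paper's: strict positivity of $u$ via the Maximum Principle (Theorem \ref{teo_mp}) combined with the root identity $(1-\lambda)u(\emptyset)=\frac1m\sum_{i=0}^{m-1}u(\emptyset,i)$. Where you genuinely diverge is in the remaining cases. For $\beta=0$ or $\beta\in[\tfrac12,1)$ the paper does not work at the root at all: it picks an arbitrary vertex $x$ with $u(x)\neq0$ and reads off $(1-\p^{|x|}\lambda)u(x)=\beta u(\hat{x})+\frac{1-\beta}{m}\sum_i u(x,i)\ge 0$, hence $\lambda\le \p^{-|x|}\le 1$ because $\p\ge1$ in those regimes; this sidesteps the propagation-of-zeros induction you use to secure $u(\emptyset)>0$ when $\beta=0$ (your induction is valid, just not needed in the paper's route). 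For the strict inequality at $\beta=\tfrac12$ the paper runs a separate argument showing that $\lambda=1$ would force $u\equiv0$ through the recurrence, whereas you obtain it directly from strict positivity of $u$, which Theorem \ref{teo_mp} gives for every $\beta>0$; your route is therefore more uniform and in fact yields $\lambda<1$ for all $\beta\in(0,1)$, which is consistent with (and vacuous beyond $\tfrac12$ by) Theorem \ref{casobeta>12}, and the positivity fact you rely on is exactly what the paper records in the remark following Lemma \ref{lema_fe_1}. One detail to make explicit: applying Theorem \ref{teo_mp} requires $-\Delta_\beta u=\lambda u\ge0$, i.e.\ $\lambda\ge0$, which you should justify by citing Lemma \ref{lema_fe_1} (or by observing that $\lambda\le0$ makes the conclusion trivial).
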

    \begin{proof}
    	Let $u$ be a non-negative eigenfunction associated to 
    	$\lambda$. We distinguish two cases.
    	
		\noindent{\it Case \(\beta\in (0,\frac12)\).}
			By Theorem \ref{teo_mp}, we have that $u(x)>0$ in $\T.$ 
		On the other hand, since
		\[
		    \lambda u(\emptyset)=-\Delta_\beta u (\emptyset)
		    =u(\emptyset)-\frac{1}{m}\sum_{i=0}^{m-1}u(\emptyset,i),
		\]
		we obtain
		\[
		    (1-\lambda)u(\emptyset)=\frac{1}{m}\sum_{i=0}^{m-1}
		    u(\emptyset,i)>0,
		\]
		which implies $\lambda<1.$
		
		\noindent{\it Case \(\beta=0\) or \(\beta\in [\frac12,1)\).} 
		Let \(x\in\T\) be such that \(u(x)\neq 0.\) Then,
		\[
			\lambda u(x)=-\Delta_\beta u(x)=\left(
				u(x) -\beta u(\hat{x}) - 
				\frac{1-\beta}{m} \sum_{i=0}^{m-1} u(x,i) 
			\right)
			\p^{-|x|},
		\]
		and rearranging gives
		\[
			(1-p_\beta^{|x|}\lambda)u(x)=
				\beta u(\hat{x}) + 
				\frac{1-\beta}{m} \sum_{i=0}^{m-1} u(x,i)\ge0. 
		\]
		Therefore \(\lambda\le p_\beta^{-|x|}\le 1\) due to \(\beta=0\)
		or \(\beta\in[\frac12,1)\).
		
		Finally, if \(\beta=\frac12\) then \(p_\beta=1.\) Now if \(\lambda=1,\) we have 
		that 
		\begin{align*}
				&0=\frac1m\sum_{i=0}^{m-1} u(\emptyset,i) \text{ if } x=\emptyset,\\
				&0=u(\hat{x})+\frac1m
				\sum_{i=0}^{m-1} u(x,i)  \text{ if } x\not=\emptyset.
		\end{align*}
		Since \(u\) is non-negative, the only solution satisfying both equations is \(u\equiv0,\) a contradiction.
    \end{proof}
    
   	Finally, we establish several technical results that will play a crucial role in the rest of this article.
    
    \begin{lem}\label{autofuncion_cte_niveles}
      Let \(\beta\in (0,1),\) \(\lambda\) be an eigenvalue of 
        \(\Delta_\beta\), and \(u\) be an eigenfunction 
        associated with \(\lambda\). 
        Then, \(\bar{u}\colon \T \to \mathbb{R}\) defined by
        \begin{equation}
		    \label{eq:carubar}
            \bar{u}(x) \coloneqq \frac{1}{m^{|x|}} 
            \sum_{y \in \T \colon |y| = |x|} u(y)
        \end{equation}
        is constant on each level, and satisfies
        \[
            -\Delta_\beta \bar{u}(x) = \lambda \bar{u}(x)
            \text{ in } \T.
        \]
    \end{lem}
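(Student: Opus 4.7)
The first assertion is immediate from the definition: $\bar u(x)$ depends on $x$ only through $|x|$, so $\bar u$ is constant on each level of $\T$. Let me write $\bar u_k$ for the common value of $\bar u$ on level $k$, so that $\bar u_k = \frac{1}{m^k}\sum_{|y|=k}u(y)$.

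The plan for the second assertion is to average the eigenvalue equation $-\Delta_\beta u(y)=\lambda u(y)$ over all $y$ at a fixed level $k$ and match the result with $-\Delta_\beta \bar u$. The key combinatorial facts are that each vertex at level $k$ has exactly one predecessor at level $k-1$, each vertex at level $k-1$ is the predecessor of exactly $m$ vertices at level $k$, and each vertex at level $k$ has exactly $m$ successors at level $k+1$. These yield the identities
\[
\frac{1}{m^k}\sum_{|y|=k}u(\hat y)=\bar u_{k-1},\qquad \frac{1}{m^k}\sum_{|y|=k}\frac{1}{m}\sum_{i=0}^{m-1}u(y,i)=\bar u_{k+1},\qquad \frac{1}{m^k}\sum_{|y|=k}u(y)=\bar u_k,
\]
for every $k\ge 1$.

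Now fix $x\in\T$ with $|x|=k\ge 1$. Summing the eigenvalue equation over level $k$, dividing by $m^k$, and applying the three identities above gives
\[
-\bigl(\beta \bar u_{k-1}+(1-\beta)\bar u_{k+1}-\bar u_k\bigr)p_\beta^{-k}=\lambda \bar u_k.
\]
Because $\bar u$ is constant on levels, the left-hand side is exactly $-\Delta_\beta\bar u(x)$ by the definition of the operator at a non-root vertex, while the right-hand side is $\lambda\bar u(x)$. For the root, the eigenvalue equation at $\emptyset$ reads $-\bigl(\frac{1}{m}\sum_{i=0}^{m-1}u(\emptyset,i)-u(\emptyset)\bigr)=\lambda u(\emptyset)$, and the first sum equals $\bar u_1$ by definition, while $u(\emptyset)=\bar u_0$; hence $-\Delta_\beta \bar u(\emptyset)=-(\bar u_1-\bar u_0)=\lambda\bar u_0=\lambda\bar u(\emptyset)$.

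There is no real obstacle here; the only point that requires a little care is the double-counting bookkeeping in the three level-averaging identities and the separate (and trivial) treatment of the root. Note also that since $u$ is bounded, so is $\bar u$, so the resulting identity makes sense pointwise in $\T$.
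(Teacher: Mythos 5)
Your proof is correct and follows essentially the same route as the paper: the paper simply asserts $\Delta_\beta \bar u(x)=\frac{1}{m^{|x|}}\sum_{|y|=|x|}\Delta_\beta u(y)$ "by linearity" and concludes, while you spell out the predecessor/successor counting identities that justify exactly that averaging step (plus the root case). No gap; your version is just a more detailed write-up of the same computation.
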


    \begin{proof} 
      	By linearity of the Laplacian \(\Delta_\beta\), we have
        \[
            \Delta_\beta \bar{u}(x) = \frac{1}{m^{|x|}} 
            \sum_{y \in \T \colon |y| = |x|} 
            \Delta_\beta u(y).
        \]
        Therefore,
        \[
            \Delta_\beta \bar{u}(x) + \lambda \bar{u}(x) 
            = \frac{1}{m^{|x|}} 
            \sum_{y \in \T \colon |y| = |x|} 
            \left[ \Delta_\beta u(y) + \lambda u(y) \right] = 0.
       \] 
    \end{proof}

    \begin{re}
        \label{remark_cte_niveles}
        Suppose that \(u: \T \to \mathbb{R}\) satisfies \(\Delta_\beta u(x) 
        + \lambda u(x) \leq K\) for some \(\lambda > 0\). Then the function 
        \(\bar{u}\), defined by \eqref{eq:carubar}, also satisfies \(\Delta_\beta \bar{u}(x) + \lambda \bar{u}(x) \leq K\). 
        Moreover, if \(K = 0\) and \(\bar{u}\) is non-negative, 
        then \(\bar{u}\) is non-increasing across levels. 
        Indeed, when \(\beta > 0\), the fact that \(\bar{u}\) 
        is constant on each level allows us to write, for \(x = \emptyset\):
        \[
            \Delta_\beta \bar{u}(\emptyset) = \left( \bar{u}(\emptyset, 0) - 
            \bar{u}(\emptyset) \right) \leq -\lambda \bar{u}(\emptyset) \leq 0.
        \]
        Thus, \(\bar{u}(\emptyset, 0) \leq u(\emptyset)\).
        For any \(x \neq \emptyset\), 
        \[
            \Delta_\beta \bar{u}(x) = \left[ \beta (\bar{u}(\hat{x}) - \bar{u}(x)) 
            + (1 - \beta)(\bar{u}(x, 0) - \bar{u}(x)) \right] p^{-|x|} \leq 
            -\lambda \bar{u}(x) \leq 
            0.
        \]
        Using this inequality and 
        applying an inductive process at each level, we conclude that \(\bar{u}\) is 
        non-increasing with respect to the levels.        

        The case \(\beta = 0\) is similar.
    \end{re}
    
    \begin{lem}
		\label{lema_fe_3_beta12} 
		Let $\beta=\frac12.$ If $\lambda$ 
		is a principal eigenvalue of $\Delta_{\frac12}$ and \(u\)
		is a non-negative eigenfunction, then \(\bar{u}\) (defined by \eqref{eq:carubar})
	   	is a positive eigenfunction associated with
	    \(\lambda\). 
	\end{lem}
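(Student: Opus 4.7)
The plan is to exploit the one-dimensional structure that averaging over levels imposes, reducing the problem to a linear recurrence.

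First, I would invoke Lemma \ref{autofuncion_cte_niveles} to conclude that $\bar{u}$ is itself an eigenfunction associated with $\lambda$ and is constant on every level of $\T$. Writing $\bar{u}(x)=f(|x|)$ for some $f\colon\N_0\to\R$, the non-negativity of $u$ passes to $\bar u$, so $f\ge 0$. Moreover, $\bar u\not\equiv 0$: otherwise $\sum_{|y|=k}u(y)=0$ for every $k$, and since $u\ge 0$ this would force $u\equiv 0$, contradicting that $u$ is an eigenfunction.

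Next, I would specialize the eigenvalue equation $-\Delta_{1/2}\bar u=\lambda\bar u$ to the function $f$. With $\beta=\tfrac12$ one has $p_\beta=1$, and evaluating at the root and at any $x\neq\emptyset$ with $|x|=k$ yields the recurrence
\[
f(1)=(1-\lambda)\,f(0),\qquad f(k+1)=2(1-\lambda)\,f(k)-f(k-1)\quad\text{for }k\ge 1.
\]
This is where the whole proof gets its teeth: the value of $f$ at a single level, together with its value at the next level, determines $f$ everywhere by a second-order linear recursion.

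From here the positivity follows in two short steps. First, $f(0)>0$: if $f(0)=0$ then $f(1)=0$ and the recurrence propagates $f\equiv 0$, contradicting $\bar u\not\equiv 0$. Second, $f(k)>0$ for every $k\ge 1$: otherwise let $k_0\ge 1$ be the smallest level with $f(k_0)=0$; then by minimality $f(k_0-1)>0$, and the recurrence forces
\[
f(k_0+1)=2(1-\lambda)\,f(k_0)-f(k_0-1)=-f(k_0-1)<0,
\]
contradicting $f\ge 0$. Thus $f(k)>0$ for all $k\ge 0$, i.e.\ $\bar u$ is strictly positive in $\T$. I do not anticipate a serious obstacle; the only subtle point is making sure to treat the root separately (since the definition of $\Delta_\beta$ at $\emptyset$ has no $\beta$-weight and no $p_\beta$-factor), which is why the recurrence has a slightly different initial step.
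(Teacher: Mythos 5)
There is a genuine gap: you never verify the boundary condition. In this paper an eigenfunction associated with $\lambda$ is, by definition (see \eqref{prob.autovalores.def-99}), a bounded function, not identically zero, that satisfies \emph{both} $-\Delta_\beta \bar u=\lambda\bar u$ in $\T$ \emph{and} $\lim_{x\to y}\bar u(x)=0$ for every $y\in\partial\T$. Your argument establishes the equation (via Lemma \ref{autofuncion_cte_niveles}), boundedness, $\bar u\not\equiv 0$, and strict positivity through the recurrence — that part is correct, and in fact it is a nice elementary replacement for the paper's appeal to the Maximum Principle (Theorem \ref{teo_mp}) to get positivity. But you stop before showing that the level values $f(k)$ tend to $0$ as $k\to\infty$. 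This is not a formality: the decay of $\bar u$ is exactly the content of the lemma that is used later, in the proof of Theorem \ref{casobeta>12} for $\beta=\tfrac12$, where the authors invoke this lemma precisely to conclude $\lim_{|x|\to\infty}\tilde u(|x|)=0$ before running the maximum-level argument. Nor does the decay of $\bar u$ follow for free from the decay of $u$ along branches, since $\bar u(k)$ is an average over $m^k$ vertices and pointwise convergence along each branch does not automatically pass to the averages without an extra argument (e.g.\ bounded convergence with respect to the natural measure on $\partial\T$).

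The paper closes this gap as follows: by Remark \ref{remark_cte_niveles}, $\bar u$ is non-increasing across levels, so being constant on levels and non-negative it has a limit $c\ge 0$ along every branch, independent of the branch; passing to the limit in the recurrence $(1-\lambda)f(k)=\tfrac12 f(k-1)+\tfrac12 f(k+1)$ gives $(1-\lambda)c=c$, i.e.\ $\lambda c=0$, and since $\lambda>0$ (Lemma \ref{lema_fe_1}) one gets $c=0$. To complete your proof you would need to add this step (or an equivalent one, such as the dominated-convergence argument over $\partial\T$); note that your recurrence alone does not force decay, since monotonicity or some control beyond positivity is needed to even guarantee the limit exists and to rule out a positive limit.
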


	\begin{proof}
		Let \(u\) be a non-negative eigenfunction associated to
	    \(\lambda\). Without loss of generality, we may assume that 
	    \(u(\emptyset)=1.\) 
	    
	    By the Maximum Principle (Theorem \ref{teo_mp}) \(u\) 
	    is strictly positive in \(\T\), and hence \(\bar{u}\) is also strictly positive.	    
	    By  Lemma \ref{autofuncion_cte_niveles}, the function
		\(\bar{u}\) 
	   is constant on each level and satisfies
	   \begin{equation}
        \label{eq:auximp}
            -\Delta_{\frac12} \bar{u}(x) = \lambda \bar{u}(x)
            \text{ in } \T.
        \end{equation}
        Furthermore, by Remark \ref{remark_cte_niveles}, \(\bar{u}\) is non-increasing across levels.
	 
	    Since \(\bar{u}\) is constant at each level and non-increasing,
	   	the limit \(\lim\limits_{x\to y} \bar{u}(x)\) exists for every
		\(y\in \partial\T,\) and is independent of \(y\).
		Denote this limit by \(c.\) Now, recall that \(\bar{u}\) 
		satisfies the recurrence relation:
		\[
			(1-\lambda)\bar{u}(x)=\frac12 \bar{u}(\hat{x})+\frac12\bar{u}(x,0)
			\quad \forall x\in\T
			\setminus\{\emptyset\},
		\]
		Taking the limit as \(|x|\to \infty,\) we obtain 
		\[
			(1-\lambda)c=c,
		\]
		which implies \(\lambda =1\) unless \(c=0\). Since \(\lambda>0\) by Lemma \ref{lema_fe_2}, we conclude that \(c=0.\)
	\end{proof}

\section{\texorpdfstring{Principal eigenvalues. Cases: \(\beta=0\) or \(\beta \in [1/2,1)\)}{Principal eigenvalues. Cases: beta=0 or beta in (1/2,1)}}

\label{cpe1}

	We begin by characterizing the principal eigenvalues.
	In the case \(\beta=0\), the set of all principal eigenvalues is \((0,1].\)
	
	\begin{teo}\label{casobeta=0}
		Let \(\beta=0\). Then $\lambda$ is a principal 
    	eigenvalue of \(\Delta_\beta\) 
    	if and only if \( \lambda\in (0,1].\)
	\end{teo}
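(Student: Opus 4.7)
The plan is to prove the two implications separately. The ``only if'' direction is essentially a free consequence of Section \ref{sppe}: Lemma \ref{lema_fe_1} forces any eigenvalue to be strictly positive, while Lemma \ref{lema_fe_2} gives $\lambda \le 1$ for principal eigenvalues when $\beta = 0$. Together these yield $\lambda \in (0,1]$ immediately.

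For the ``if'' direction, given any $\lambda \in (0,1]$ I would exhibit the explicit candidate
\[
u(x) := (1-\lambda)^{|x|},
\]
with the convention that, when $\lambda = 1$, this reads $u(\emptyset) = 1$ and $u(x) = 0$ for all $x \neq \emptyset$. Since $\beta = 0$ gives $p_\beta = 1$ and the operator involves only successors, a direct computation shows
\[
-\Delta_0 u(x) = u(x) - \frac{1}{m}\sum_{i=0}^{m-1} u(x,i) = (1-\lambda)^{|x|} - (1-\lambda)^{|x|+1} = \lambda\, u(x),
\]
both at $x = \emptyset$ and at any other vertex; the two branches of the definition of $\Delta_0$ coincide in this case, so no separate check is needed at the root.

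It remains to verify that $u$ is a valid eigenfunction in the sense of \eqref{prob.autovalores.def-99}. Since $1-\lambda \in [0,1)$, the function $u$ is non-negative and bounded by $1$, so it lies in the admissible class. For the boundary condition, any branch corresponds to a sequence $\{x_n\}$ with $|x_n| \to \infty$, hence
\[
\lim_{n\to\infty} u(x_n) = \lim_{n\to\infty} (1-\lambda)^{|x_n|} = 0,
\]
independently of the branch. Hence $\lambda$ is a principal eigenvalue, with $u$ being a non-negative eigenfunction. This closes the equivalence.

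There is no real obstacle in this argument: the forward direction is a verification of an explicit formula and the reverse direction is a direct citation of Lemmas \ref{lema_fe_1} and \ref{lema_fe_2}. The only mildly delicate point is the endpoint $\lambda = 1$, where the formula $(1-\lambda)^{|x|}$ must be read with $0^0 = 1$; however, the resulting function (supported only at the root) was already observed in the discussion after Theorem \ref{Intro casobeta=0} to solve the eigenvalue equation, so the verification is straightforward.
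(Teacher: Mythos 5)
Your proposal is correct and follows the same route as the paper: the ``only if'' direction is exactly the combination of Lemmas \ref{lema_fe_1} and \ref{lema_fe_2}, and the ``if'' direction uses the same explicit eigenfunction $u(x)=(1-\lambda)^{|x|}$, for which the paper simply states that the verification is straightforward while you carry it out (including the $\lambda=1$ endpoint and the boundary condition).
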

	
	\begin{proof}
		By Lemmas \ref{lema_fe_1} and \ref{lema_fe_2}, if 
		 $\lambda$ is a principal 
    	eigenvalue of \(\Delta_\beta\),
    	then \(\lambda\in(0,1].\) 
    	Conversely, it is straightforward to verify that 
    	if \(\lambda \in (0,1],\) then
    	\[
    		u(x)=(1-\lambda)^{|x|} 
    	\]
    	is an eigenfunction associated to \(\lambda\).
	\end{proof}
	
	In the case \(\beta\in [\frac12,1),\) the set of principal eigenvalues is
	empty.
	
	\begin{teo}\label{casobeta>12}
		Let \(\beta\in [\frac12,1)\). If $\lambda$ is an 
    	eigenvalue of \(\Delta_\beta\), 
    	then \(\lambda\) is not principal.
	\end{teo}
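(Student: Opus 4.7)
The plan is to proceed by contradiction, splitting according to whether $\beta\in(\tfrac12,1)$ or $\beta=\tfrac12$, since the two subcases require rather different arguments.

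Suppose $\lambda$ is a principal eigenvalue with a nonnegative nontrivial eigenfunction $u$. By Lemma \ref{lema_fe_1} we have $\lambda>0$, and since $-\Delta_\beta u=\lambda u\ge 0$ with $u$ vanishing along every branch, the Maximum Principle (Theorem \ref{teo_mp}) yields $u>0$ throughout $\T$. For $\beta\in(\tfrac12,1)$, I would rewrite the eigenvalue equation at any $x\ne\emptyset$ as
\[
\bigl(1-\p^{|x|}\lambda\bigr)u(x)=\beta\,u(\hat x)+\frac{1-\beta}{m}\sum_{i=0}^{m-1}u(x,i)>0,
\]
so that $\lambda<\p^{-|x|}$ for every such $x$. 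Because $\p=\beta/(1-\beta)>1$ when $\beta>\tfrac12$, letting $|x|\to\infty$ forces $\lambda\le 0$, contradicting $\lambda>0$.

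The case $\beta=\tfrac12$ is more delicate because $\p=1$ trivializes the preceding estimate. Here I would invoke Lemma \ref{lema_fe_3_beta12} to replace $u$ by its level average $\bar u$, which is still a positive eigenfunction, is constant on each level, and tends to $0$ along every branch. Writing $\bar u(x)=f(|x|)$, the eigenvalue equation at the root gives $f(1)=(1-\lambda)f(0)$, while at $x\ne\emptyset$ it reduces to the second-order recurrence
\[
f(k+1)=2(1-\lambda)f(k)-f(k-1),\qquad k\ge 1.
\]
By Lemma \ref{lema_fe_2} we have $\lambda\in(0,1)$, so the characteristic polynomial $r^2-2(1-\lambda)r+1$ has product of roots $1$ and negative discriminant; its roots form a complex conjugate pair on the unit circle. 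Consequently every real-valued solution has the form $A\cos(k\theta)+B\sin(k\theta)$ with $\cos\theta=1-\lambda$ and is purely oscillatory, so the only solution that decays to $0$ is the zero one. Equivalently, the quadratic form $E(k)=f(k)^2+f(k-1)^2-2(1-\lambda)f(k)f(k-1)$ is preserved by the recurrence and equals $\lambda(2-\lambda)f(0)^2>0$, which rules out $f(k)\to 0$. Either viewpoint contradicts $\bar u(\emptyset)>0$.

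The main obstacle is the $\beta=\tfrac12$ case: the elementary decay argument of the first case collapses, and one must instead reduce via averaging to a one-dimensional recurrence and observe that its characteristic roots lie exactly on the unit circle, so that no nonzero bounded solution can decay to $0$. Once this is recognized, the remaining verifications are straightforward bookkeeping with the eigenvalue identity and the lemmas already established.
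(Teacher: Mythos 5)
Your proof is correct, but it departs from the paper's argument in both subcases. For $\beta\in(\tfrac12,1)$ the paper does not use strict positivity at the outset: it observes that $1-p_\beta^{|x|}\lambda\le 0$ beyond some level $k_0$, forces $u\equiv 0$ there, and then propagates the vanishing backwards through the identity $0=\beta u(\hat x)+\tfrac{1-\beta}{m}\sum_i u(x,i)$ to conclude $u\equiv0$; your variant first upgrades $u\ge0$ to $u>0$ via the Maximum Principle (Theorem \ref{teo_mp}) and then reads off $\lambda<p_\beta^{-|x|}\to0$, which is a little shorter and equally valid. For $\beta=\tfrac12$ both you and the paper reduce, via Lemmas \ref{autofuncion_cte_niveles} and \ref{lema_fe_3_beta12}, to a level-constant positive solution $f(k)$ of $(1-\lambda)f(k)=\tfrac12 f(k-1)+\tfrac12 f(k+1)$ with $f(k)\to0$ and $\lambda\in(0,1)$ by Lemma \ref{lema_fe_2}; but where the paper runs a discrete concavity argument (at a last maximum level the differences $f(k)-f(k+1)$ stay bounded below by $c>0$, forcing $f$ negative), you analyze the constant-coefficient recurrence directly. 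Your first phrasing (``purely oscillatory, hence cannot decay'') would need a small extra word about rational versus irrational $\theta/\pi$, but your second argument closes this cleanly: the quadratic form $E(k)=f(k)^2+f(k-1)^2-2(1-\lambda)f(k)f(k-1)$ is invariant under the recurrence, positive definite since $|1-\lambda|<1$, and equals $\lambda(2-\lambda)f(0)^2>0$ by the root relation $f(1)=(1-\lambda)f(0)$, which is incompatible with $f(k)\to0$. The paper's difference argument is more elementary and self-contained; your conserved-quantity argument is slightly slicker, makes the unit-circle spectral picture explicit, and would generalize immediately to any recurrence whose characteristic roots lie on the unit circle.
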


	\begin{proof}
		Suppose for contradiction that $\lambda$ is a principal eigenvalue of \(\Delta_\beta.\)
		Then there exists a non-negative eigenfunction \(u\)
		associated with \(\lambda.\)  By Lemmas \ref{lema_fe_1}
		and \ref{lema_fe_2}, we know that \(\lambda\in(0,1].\)
		
		Using the eigenfunction equation, we obtain
		\begin{align*}
			(1-\lambda)u(\emptyset)&=\frac1m\sum_{i=0}^{m-1}u(\emptyset,i)\ge0,\\
			(1-p_\beta^{|x|}\lambda)u(x)&=\beta u(\hat{x})+
			\frac{1-\beta}m\sum_{i=0}^{m-1}u(x,i)\ge 0 \quad \forall x\in\T
			\setminus\{\emptyset\}.
		\end{align*}
		Now, assume that \(\beta>\frac12\), so that \(p_{\beta}>1.\) 
		Then, there exists \(k_0\in\N\) such that for all \(|x|\ge k_0,\) we have
		\(1-p_\beta^{|x|}\lambda \le 0,\) which forces \(u(x)=0\) for any \(x\in \T\)
		with \(|x|\ge k_0.\) 
		
		Let \(x\in \T\) be such that \(|x|= k_0.\)  Then  
		\[
			0=(1-p_\beta^{|x|}\lambda)u(x)=\beta u(\hat{x})+
			\frac{1-\beta}m\sum_{i=0}^{m-1}u(x,i)= \beta u(\hat{x}).
		\]
		Therefore \(u(\hat{x})=0.\) That is, \(u(x)=0\) for any \(x\in \T\)
		with \(|x|\ge k_0-1.\)  By repeating this procedure, 
		it follows that $u\equiv 0$ on \(\T,\) a contradiction.

		For \(\beta=1/2\), we have 
		\begin{align*}
			(1-\lambda)u(\emptyset)&=\frac1m\sum_{i=0}^{m-1}u(\emptyset,i)\ge0,\\
			(1-\lambda)u(x)&= \frac{1}{2} u(\hat{x})+
			\frac{1}{2}\frac{1}{m}\sum_{i=0}^{m-1}u(x,i)\ge 0 \quad \forall x\in\T
			\setminus\{\emptyset\}.
		\end{align*}
		
		By Lemma \ref{autofuncion_cte_niveles}, we may assume that $u(x)$ depends only
		on $|x|$, so $u(x) = \tilde{u} (|x|).$
		Then 
		\begin{align}
		\label{recurrencia_caso_beta1/2}
			(1-\lambda) \tilde{u}(0)&= \tilde{u} (1),\\
			(1-\lambda) \tilde{u} (|x|)&= \frac{1}{2} \tilde{u}(|x|-1)+
			\frac{1}{2}\tilde{u}(|x|+1)  \quad \forall x\in\T \setminus\{\emptyset\}.
		\end{align}
		From Lemma \ref{lema_fe_3_beta12} we have that 
		\[
		\lim_{|x| \to \infty} \tilde{u}(|x|) =0.
		\]
		It follows that $\tilde{u}$ attains its maximum at some level $k_0$, and we may choose $k_0$ maximal among such values, that is, 
		\[
		\tilde{u}(k_0) = \max_{\T} \tilde{u}>0
		\]
		and 
		\[
		k_0 = \max \Big\{k \colon \tilde{u}(k) = \max_{\T} \tilde{u} \Big\}.
		\]
		Define $ \tilde{u} (k_0) - \tilde{u}(k_0+1) = c >0.$
		Now, from 
		\[
			\frac{1}{2} \tilde{u}(k_0)+ \frac{1}{2}\tilde{u}(k_0+2) = (1-\lambda) \tilde{u} (k_0+1) \leq  \tilde{u} (k_0+1)
		 \]
		 we get
		\[
		  \frac{1}{2} (\tilde{u}(k_0) - \tilde{u} (k_0+1) )
			\leq \frac12 ( \tilde{u} (k_0+1) - \tilde{u}(k_0+2) ).
		 \]
		 Hence, $\tilde{u} (k_0+1) - \tilde{u}(k_0+2) \geq c.$
		 Iterating this computation we obtain
		 \[
		 \tilde{u} (k_0+i) - \tilde{u}(k_0+i+1) \geq c.
		 \]		 
		 Summing these inequalities and rearranging, we obtain
		\[
		 \tilde{u}(k_0+i+1) \leq \tilde{u} (k_0) -  (i+1) c.
		 \]
		 For large enough $i$, the right-hand side becomes negative, contradicting the non-negativity of $\tilde{u}$.
		 Hence, no principal eigenvalue exists when $\beta=1/2$.
	\end{proof}

\section{\texorpdfstring{Principal eigenvalues. Case: \(\beta \in (0,1/2)\)}{Principal eigenvalues. Case: beta in (0,1/2)}}
\label{cpe2}
	In this section, we study the case where 	\(\beta\in(0,\frac12).\)
	
	 Inspired by \cite{BNV}, we define the set $\A$ by
	\begin{equation}
		\label{conjunto-A}
		\A 
		\coloneqq 
		\Big\{\lambda >0 \colon 
		\exists v \colon \T\to\R 
		\text{ with } 
		0<c < v < C, 
		\text{ satisfying } 
		\Delta_{\beta}v + \lambda v \le 0 \text{ in } \T \Big\}.
	\end{equation}
	We then define
	\begin{equation}
		\label{lambda_1:def}
	\lambda_1(\beta) \coloneqq \sup \A .
	\end{equation}
	When there is no ambiguity, we omit the dependence on $\beta$ in $\lambda_1$.

	We aim to prove that $\lambda_1(\beta)$ is the unique principal eigenvalue of 
	$\Delta_{\beta}.$ We begin by showing the following lemma.
	
	\begin{lem}
	\label{lem:A_no_vacio}
		Let $\beta\in (0,\tfrac12)$. 
		Then $\A \neq \emptyset.$  
	\end{lem}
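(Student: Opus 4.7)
The plan is to exhibit an explicit $\lambda > 0$ together with a bounded, strictly positive test function $v$ that satisfies the supersolution inequality in the definition of $\A$. Motivated by Lemma \ref{autofuncion_cte_niveles}, I would look for $v$ of the form $v(x) = f(|x|)$ with $f\colon\N_0 \to \R$ bounded and bounded away from zero. For such $v$, the definition of $\Delta_\beta$ immediately gives $\Delta_\beta v(\emptyset) = f(1) - f(0)$, and for $x\neq\emptyset$,
\[
\Delta_\beta v(x) = \p^{-|x|}\bigl(\beta f(|x|-1) + (1-\beta) f(|x|+1) - f(|x|)\bigr).
\]

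The concrete ansatz will be $f(k) = 1 + a^k$ for some parameter $a\in(\p,1)$ to be fixed (note that $\p\in(0,1)$ since $\beta<\tfrac12$, so this window is non-empty). Since $0<a<1$ one has $1\le f\le 2$, so such a $v$ is admissible for $\A$. A short computation using the factorization $(1-\beta)a^2 - a + \beta = (1-\beta)(a-1)(a-\p)$ (whose roots $1$ and $\p$ are easily verified by expansion) yields
\[
\beta f(k-1) + (1-\beta) f(k+1) - f(k) = a^{k-1}(1-\beta)(a-1)(a-\p),
\]
which is strictly negative when $a\in(\p,1)$. Hence $\Delta_\beta v(x) < 0$ on $\T\setminus\{\emptyset\}$, and also $\Delta_\beta v(\emptyset) = a-1 < 0$.

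The final step is to pick $\lambda>0$ small enough so that $\Delta_\beta v + \lambda v \le 0$ throughout $\T$. At the root this reduces to $\lambda \le (1-a)/2$. At a vertex of level $k\ge 1$ it becomes $\lambda(1+a^k) \le (1-\beta)(1-a)(a-\p)\,a^{k-1}\p^{-k}$, or equivalently
\[
\lambda \le \frac{(1-\beta)(1-a)(a-\p)}{a\,\p}\cdot \frac{(a/\p)^k}{1+a^k}.
\]
Because $a/\p > 1$ and $0<a<1$, the map $k\mapsto (a/\p)^k/(1+a^k)$ is strictly increasing in $k\ge 1$, so the infimum of the right-hand side is attained at $k=1$ and is a strictly positive constant. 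Any $\lambda>0$ below both of these bounds then lies in $\A$, proving the lemma.

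The main technical point is coping with the factor $\p^{-|x|}$, which blows up with the level and would defeat any naive choice of $v$. The window $a\in(\p,1)$ is chosen precisely so that $\beta f(k-1)+(1-\beta)f(k+1)-f(k)$ decays only like $a^{k-1}$, just slowly enough that after multiplication by $\p^{-k}$ the resulting $\Delta_\beta v(x)$ remains negative and in fact grows in magnitude with $k$, which makes the supersolution inequality easier to satisfy at high levels while still being enforceable at $k=0$ and $k=1$.
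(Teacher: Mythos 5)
Your proof is correct. It follows the same overall strategy as the paper — exhibit an explicit, level-constant function $v$ with $0<c<v<C$ and $\Delta_\beta v$ strictly negative, then take $\lambda>0$ small — but with a different barrier: you use the geometric profile $v(x)=1+a^{|x|}$ with $a\in(\p,1)$, exploiting the factorization $(1-\beta)a^2-a+\beta=(1-\beta)(a-1)(a-\p)$, whereas the paper takes $v(x)=1+(|x|+a)\p^{|x|}$ with $a$ large, which makes $\Delta_\beta v(x)\equiv 2\beta-1$ for $x\neq\emptyset$ and so yields the uniform-in-level bound for free, with the explicit admissible range $\lambda<\tfrac{1-2\beta}{C}$. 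Your choice instead has $|\Delta_\beta v(x)|$ growing like $(a/\p)^{|x|}$, so you need the (easy) monotonicity of $k\mapsto (a/\p)^k/(1+a^k)$ to get a uniform threshold; both routes handle the $\p^{-|x|}$ amplification, just in different ways. One small slip: in your ``equivalently'' display the constant should be $\tfrac{(1-\beta)(1-a)(a-\p)}{a}$ rather than $\tfrac{(1-\beta)(1-a)(a-\p)}{a\,\p}$, since $a^{k-1}\p^{-k}=\tfrac1a (a/\p)^k$; this only changes the numerical threshold (the correct bound at $k=1$ is $\tfrac{(1-\beta)(1-a)(a-\p)}{\p(1+a)}$) and does not affect the argument, since the corrected right-hand side is still bounded below by a positive constant for all $k\ge1$.
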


	\begin{proof}
		Consider the function $v \colon \T \to \R$ defined by 
		\[
		v(x) = 1+(|x| + a)\p^{|x|},
		\]
		where $a>0$ is a parameter to be chosen later. 
		This function is strictly positive and bounded above by some $C>0$.
		Since $v$ is constant on each level, we can compute 
		\[
		\Delta_{\beta}v(\emptyset) 
		= 
		a(p_\beta-1) + p_\beta, 
		\quad 
		\text{ and } 
		\quad 
		\Delta_{\beta}v(x) 
		= 
		2\beta -1 
		\quad \forall x\in \T\setminus\{\emptyset\}.
		\] 
		Since $p_\beta<1$, choosing a sufficiently large $a$ ensures that 
		$\Delta_{\beta}v(x)\leq 2\beta -1< 0$ for all $x$.
		Thus, for any $\lambda\in(0,\tfrac{1-2\beta}{C})$, we have
		\[
		\Delta_{\beta}v + \lambda v \leq 2\beta -1 + \lambda C 
		< 
		0,
		\]
		and therefore $\lambda\in \A$.
	\end{proof}
	
	As a consequence of the previous lemma, \(\lambda_1(\beta)\) is well-defined and strictly positive.
	Our next step is to prove that \(\lambda_1(\beta)\) 
	is a lower bound for the set of eigenvalues of \(\Delta_\beta\).
	\begin{lem}
		Let $\beta\in (0,\tfrac12)$ and $\lambda$ be 
		an eigenvalue of \(\Delta_\beta\) and consider 
		$\theta \in \A$.
		Then, $\lambda\geq \theta$.
	\end{lem}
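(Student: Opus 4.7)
My plan is to argue by contradiction using the elliptic Comparison Principle (Theorem \ref{teo_cp_e}), which neatly sidesteps any attainment or extraction argument on the tree. Suppose $\lambda < \theta$ and let $u$ be an eigenfunction associated with $\lambda$; after replacing $u$ by $-u$ if necessary, we may assume $\sup_{\T} u > 0$. Since $v$ is bounded above and bounded below by some $c>0$, the quantity
\[
t^{*} \coloneqq \sup_{x\in\T}\frac{u(x)}{v(x)}
\]
is finite and strictly positive, and $u \leq t^{*} v$ everywhere on $\T$.

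The key step is to show that in fact $u \leq (t^{*} - \varepsilon)\,v$ for some $\varepsilon > 0$, which already contradicts the definition of $t^{*}$. Fix $\varepsilon \in (0,\, t^{*}(\theta - \lambda)/\theta)$ and set $\phi \coloneqq (t^{*} - \varepsilon)\,v$. Using $-\Delta_\beta v \geq \theta v$, the linearity of $\Delta_\beta$, the pointwise bound $u \leq t^{*} v$, and the positivity of $\lambda$ from Lemma \ref{lema_fe_1}, one computes
\[
-\Delta_\beta \phi \;\geq\; (t^{*} - \varepsilon)\,\theta\,v \;\geq\; \lambda\, t^{*}\, v \;\geq\; \lambda\, u \;=\; -\Delta_\beta u,
\]
where the second inequality relies on the choice of $\varepsilon$. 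On the boundary, $v \geq c$ gives $\liminf_{x\to y}\phi(x) \geq (t^{*} - \varepsilon)\, c$, while $\limsup_{x\to y} u(x) = 0$ from the eigenfunction boundary condition, so $\liminf \phi \geq \limsup u$ on $\partial \T$.

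Since $\phi$ and $u$ are bounded, Theorem \ref{teo_cp_e} applies and yields $\phi \geq u$ on $\T$; that is, $u(x)/v(x) \leq t^{*} - \varepsilon$ for every $x\in\T$, whence $\sup_{\T} u/v \leq t^{*} - \varepsilon < t^{*}$, a contradiction. Therefore $\lambda \geq \theta$. The one subtle point is the choice of slack $\varepsilon$: the strict inequality $\theta > \lambda$ is precisely what supplies it, and the argument degenerates exactly at $\theta = \lambda$, as one would expect for a sharp eigenvalue bound.
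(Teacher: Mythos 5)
Your argument is correct, and it takes a genuinely different route from the paper. The paper's proof is parabolic: after normalizing the eigenfunction so that $u\le v$, it compares $w(x,t)=e^{-\theta t}v(x)$ with $z(x,t)=e^{-\lambda t}u(x)$ via the Parabolic Comparison Principle (Theorem \ref{teo_cp_p}) and lets $t\to\infty$, so that the bound $e^{(\lambda-\theta)t}\ge u(x_0)/v(x_0)\ge 1/C$ forces $\lambda\ge\theta$ directly, with no contradiction argument. You instead run a purely elliptic sliding argument in the spirit of Berestycki--Nirenberg--Varadhan: set $t^{*}=\sup_{\T}u/v$ (finite since $u$ is bounded and $v\ge c>0$, and positive after replacing $u$ by $-u$), and show that if $\lambda<\theta$ then $\phi=(t^{*}-\varepsilon)v$ is a supersolution dominating $u$, which lowers the supremum and contradicts the definition of $t^{*}$. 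Your chain of inequalities is sound: the choice $\varepsilon<t^{*}(\theta-\lambda)/\theta$ gives $(t^{*}-\varepsilon)\theta\ge \lambda t^{*}$ (and also $t^{*}-\varepsilon>0$, which you need to multiply the supersolution inequality), the positivity $\lambda>0$ from Lemma \ref{lema_fe_1} justifies $\lambda t^{*}v\ge\lambda u$, and the boundary comparison $\liminf\phi\ge (t^{*}-\varepsilon)c>0=\limsup u$ puts you squarely within the hypotheses of Theorem \ref{teo_cp_e}; crucially, no attainment of the supremum is needed. What each approach buys: yours stays entirely at the elliptic level and uses only the stationary comparison principle, making the role of the strict gap $\theta-\lambda$ transparent; the paper's version avoids introducing the ratio $t^{*}$ altogether and recycles the parabolic machinery it needs anyway for the evolution results, at the cost of invoking Theorem \ref{teo_cp_p} and a normalization step ($u\le v$) that your argument does not require.
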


	\begin{proof}		
		Let $u$ be an eigenfunction associated with $\lambda,$ normalized such that \(u(x_0)=1\) for some \(x_0\in\T.\) 
		Since $\theta \in \A$, 
		there exists a function $v$ such that 
		$\Delta_\beta v +\theta v \leq 0$ and $0<c<v<C$.
		We may assume, without loss of generality, that \(u\le v\) in \(\T\).
		
		Define \(w(x,t)=e^{-\theta t}v\) and
		\(z(x,t)=e^{-\lambda t}u\). Then, 
		\[
		    w_t(x,t)-\Delta_\beta w(x,t)\ge z_t(x,t)-
		    \Delta_\beta z(x,t)
		    \quad \text{ in } \T\times(0,\infty),
		\]
		and $ w(x,0)\ge z(x,0) \quad \text{ in } \T.$
        By Parabolic Comparison Principle 
        (Theorem \ref{teo_cp_p}), it follows that
        \[
		    w(x,t)\ge z(x,t) \quad \text{ in } 
		    \T\times(0,\infty).
		\]
		Therefore,
		\[
		    e^{(\lambda-\theta)t}\ge \frac{u(x)}{v(x)}
		    \quad \text{ in } 
		    \T\times(0,\infty).
		\]    
		Then,
		\[
		    e^{(\lambda-\theta)t}\ge \frac{u(x_0)}{v(x_0)}\ge
		    \frac{1}C
		    \quad \text{ for all } t\in 
		    (0,\infty),
		\]
		which implies \(\lambda\ge \theta.\)
	\end{proof}

	As an immediate consequence, we obtain the following Corollary. 	
	\begin{co}
		Let $\beta\in (0,\tfrac12)$ and $\lambda$ be 
		an eigenvalue of \(\Delta_\beta\).
		Then \(\lambda\geq \lambda_1(\beta)\).
	\end{co}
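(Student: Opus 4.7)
The plan is to observe that this corollary is essentially a direct consequence of the preceding lemma, so the proof reduces to passing from a pointwise inequality over $\A$ to an inequality with the supremum.

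More precisely, I would fix an arbitrary eigenvalue $\lambda$ of $\Delta_\beta$. For each element $\theta \in \A$, the preceding lemma yields $\lambda \geq \theta$. Since this holds for every $\theta \in \A$, I can take the supremum over $\A$ on the right-hand side while the left-hand side stays fixed, obtaining
\[
\lambda \;\geq\; \sup_{\theta \in \A} \theta \;=\; \lambda_1(\beta),
\]
where the last equality is the definition \eqref{lambda_1:def} of $\lambda_1(\beta)$. Lemma \ref{lem:A_no_vacio} ensures that $\A$ is nonempty, so the supremum makes sense (and is finite by the preceding lemma, since the mere existence of an eigenvalue $\lambda$ already gives an upper bound for $\A$).

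There is no genuine obstacle in this step; the real work was already carried out in proving the comparison bound $\lambda \geq \theta$ for each $\theta \in \A$ via the parabolic comparison principle. The only thing to check carefully is that the supremum defining $\lambda_1(\beta)$ is indeed taken over the same set $\A$ appearing in the previous lemma, which is immediate from definition \eqref{conjunto-A}.
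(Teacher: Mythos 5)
Your proposal is correct and coincides with the paper's own (implicit) argument: the paper presents this corollary as an immediate consequence of the preceding lemma, exactly by taking the supremum over $\theta \in \A$ in the inequality $\lambda \geq \theta$, with Lemma \ref{lem:A_no_vacio} guaranteeing $\A \neq \emptyset$.
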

	 
	The next result shows that any principal eigenvalue admits a strictly positive eigenfunction that is constant on each level of the tree and decreases with respect to the depth.

	\begin{lem}
		\label{lema_fe_3} 
		Let $\beta\in(0,\tfrac{1}{2}),$ and $\lambda\in(0,1)$ 
		be a principal eigenvalue of $\Delta_\beta.$ If \(u\)
		is a non-negative eigenfunction associated with
	    \(\lambda,\) then the function \(\bar{u},\) defined by \eqref{eq:carubar}, is a positive eigenfunction associated with
	    \(\lambda\). Moreover, \(\bar{u}\) is strictly decreasing across levels.
	\end{lem}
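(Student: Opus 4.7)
The plan is to verify four things about $\bar u$: strict positivity, the eigenvalue identity, the Dirichlet boundary condition, and strict decrease across levels. Three of them are already in hand. Since $u \geq 0$ satisfies $-\Delta_\beta u = \lambda u \geq 0$ with $u \not\equiv 0$, Theorem~\ref{teo_mp} gives $u > 0$ in $\T$, and hence $\bar u > 0$ in $\T$ (in particular $\bar u \not\equiv 0$, and $\bar u$ is bounded by $\|u\|_\infty$). Lemma~\ref{autofuncion_cte_niveles} delivers both $-\Delta_\beta \bar u = \lambda \bar u$ and the fact that $\bar u$ is constant on each level. The real content of the lemma is therefore the boundary condition and the strict monotonicity.

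For the boundary condition the plan is to express $\bar u$ as an expectation on $\partial\T$. Endow $\partial\T$ with the natural product probability measure $\mu$: each vertex $z$ of level $k$ corresponds to the cylinder of branches passing through $z$, which has $\mu$-mass $m^{-k}$. Writing $\xi\vert_{k} \in \T$ for the vertex at level $k$ on the branch $\xi$, one has
\[
\bar u(x)\big\vert_{|x|=k} \;=\; \frac{1}{m^{k}} \sum_{|y|=k} u(y) \;=\; \int_{\partial \T} u(\xi\vert_{k})\, d\mu(\xi).
\]
For every fixed $\xi\in\partial\T$ the Dirichlet condition on $u$ gives $u(\xi\vert_{k})\to 0$ as $k\to\infty$, while $u$ is bounded. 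Dominated convergence then yields $\bar u(x)\vert_{|x|=k}\to 0$, and since $\bar u$ is level-constant this is exactly $\lim_{x\to y}\bar u(x)=0$ for every $y\in\partial\T$, completing the verification that $\bar u$ is an eigenfunction.

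For strict decrease, write $\bar u(x)=f(|x|)$. Remark~\ref{remark_cte_niveles} already supplies that $f$ is non-increasing. The root equation $\Delta_\beta\bar u(\emptyset)+\lambda\bar u(\emptyset)=0$ reads $f(1)=(1-\lambda)f(0)$, which is strictly less than $f(0)$ because $\lambda>0$. At any interior level $k\ge 1$, the eigenvalue equation becomes the three-term recurrence
\[
(1-\lambda\,\p^{k})\,f(k)\;=\;\beta\,f(k-1)+(1-\beta)\,f(k+1).
\]
If $f(k_0)=f(k_0+1)$ for some $k_0\ge 1$, then substituting into this identity gives $f(k_0-1)=(1-\lambda\,\p^{k_0}/\beta)\,f(k_0)<f(k_0)$, contradicting the non-increasing property. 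Hence $f$ is strictly decreasing.

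The main obstacle is the boundary condition: one must pass from pointwise vanishing of $u$ along each individual branch to vanishing of the level-averages $\bar u$. Boundedness of $u$, combined with the integral representation above and dominated convergence, is precisely what makes this implication go through; the strict-decrease step is then a short direct manipulation of the recurrence.
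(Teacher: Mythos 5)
Your proof is correct, and its first two steps (strict positivity of $\bar u$ via the Maximum Principle, level-constancy and the eigenvalue identity via Lemma \ref{autofuncion_cte_niveles} together with Remark \ref{remark_cte_niveles}) coincide with the paper's. Where you genuinely diverge is the boundary condition. The paper argues through the variational set $\A$: if the common limit $c$ of $\bar u$ along branches were positive, then $w=\bar u-c/2$ would satisfy $\Delta_\beta w+(\lambda+\theta)w<0$ for small $\theta>0$, so $\lambda+\theta\in\A$, which is impossible once one knows that every eigenvalue is at least $\lambda_1(\beta)=\sup\A$. You instead represent the level averages as integrals over $\partial\T$ against the uniform measure, $\bar u$ at level $k$ being $\int_{\partial\T}u(\xi\vert_k)\,d\mu(\xi)$, and conclude by dominated convergence from the boundedness of $u$ (built into the definition of eigenfunction) and the pointwise Dirichlet condition along each branch. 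Your route is more elementary and more robust: it bypasses $\A$, $\lambda_1$ and the comparison machinery altogether, needs neither the monotonicity of $\bar u$ nor $\lambda<1$ at this step, and would apply verbatim in the $\beta=\tfrac12$ analogue (Lemma \ref{lema_fe_3_beta12}); what the paper's route buys is that it stays purely discrete (no measure on $\partial\T$) and remains inside the Berestycki--Nirenberg--Varadhan framework around which the section is organized. Your strict-decrease step is also a mild variant: instead of the paper's minimal-level contradiction, which uses $0<1-\lambda\p^{k_0}<1$, you solve the recurrence for the predecessor, $f(k_0-1)=\bigl(1-\lambda\p^{k_0}/\beta\bigr)f(k_0)<f(k_0)$, and contradict the non-increasing property directly; both are correct short manipulations, and yours does not require the positivity of $1-\lambda\p^{k_0}$.
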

	\begin{proof}
	    Without loss of generality, assume that \(u(\emptyset)=1.\) 
	    By  Lemma \ref{autofuncion_cte_niveles}, the function
		\(\bar{u}\) is constant on each level and satisfies
	   \begin{equation}
        \label{eq:auximp2}
            -\Delta_\beta \bar{u}(x) = \lambda \bar{u}(x)
            \qquad \text{ in } \T.
        \end{equation}
        Furthermore, by Remark \ref{remark_cte_niveles}, \(\bar{u}\) is non-increasing across levels.
	    Since \(u\) is strictly positive in \(\T\) (by the Maximum Principle), it follows that \(\bar{u}\) is also strictly positive.  
	    
	    Next, we show that \(\bar{u}\) is strictly decreasing with respect to the levels. 
	    Suppose, for contradiction, that \(\bar{u}\) is not strictly decreasing across levels. Define
        \[
        	k_0= \min\Big\{k\in\mathbb{N}_0\colon \exists x\in\T \text{ such that }
        	\bar{u}(x)=\bar{u}(x,0) \text{ and } |x|=k\Big\}.
        \]
        
        \noindent{\it Case 1:} If \(k_0=0,\) then \(\bar{u}(\emptyset)=\bar{u}(\emptyset,0)=1.\) 
        From \eqref{eq:auximp2}, we have 
        \[
        	(1-\lambda)\bar{u}(\emptyset)= \bar{u}(\emptyset,0),
        \]
        which implies \(1 - \lambda = 1\), contradicting \(\lambda>0\).
        
        \noindent{\it Case 2:} If \(k_0>0,\) let \(x_0\in\T\) with \(|x_0|=k_0\), such that 
        \(\bar{u}(x_0)=\bar{u}(x_0,0),\) \(\bar{u}(x_0)<\bar{u}(\hat{x}_0)\). Then, by \eqref{eq:auximp2}
        \[
        	(1-\lambda \p^{k_0})\bar{u}(x_0)= 
        	\beta \bar{u}(\hat{x}_0) +(1-\beta)\bar{u}(x_0,0)>\bar{u}(x_0),
        \]
        which leads to a contradiction, since \(0<(1-\lambda \p^{k_0})<1,\) from Lemmas \ref{lema_fe_1} and \ref{lema_fe_2}.  
        Therefore, \(\bar{u}\) must be strictly decreasing across levels.

        To complete the proof, we show that \(\bar{u}\) satisfies the boundary condition.
	    Since \(\bar{u}\) is constant at each level and strictly decreasing,
	   	the limit \(\lim\limits_{x\to y} \bar{u}(x)\) exists for any 
		\(y\in \partial\T,\) and is independent of \(y\).
		Denote this limit by \(c,\) and suppose \(c > 0\). 
		
		Then, $\bar{u}(x) \geq c$ for all $x$.
		Define the function $w(x) \coloneqq \bar{u}(x)- c/2$, so that
		\[
			\frac{c}{2} \leq w(x)\le w(\emptyset)=\bar{u}(\emptyset)- \frac{c}{2}=1- \frac{c}{2} <1.
		\]
		Now, for \(\theta > 0\) sufficiently small
		\[
			\Delta_\beta w(x) + (\lambda + \theta) w(x) 
			= -\frac{c}{2}(\lambda +\theta)+\theta \bar{u}(x) \leq 
			-\frac{c}{2}\lambda+\theta\left(1-\frac{c}{2}\right)<0.
		\]
		This implies \(\lambda + \theta \in \A\), contradicting the definition of $\lambda_1$. Hence, \(c = 0\), and the proof is complete.
	\end{proof}

	We now prove that \(\lambda_1(\beta)\) is a non-increasing function of \(\beta.\)
	
	\begin{lem}
		\(\lambda_1(\beta)\) is non-increasing on the interval \((0,\frac12).\)
	\end{lem}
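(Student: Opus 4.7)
The plan is to prove the stronger statement that if $0<\beta_1<\beta_2<\tfrac12$, then the admissible set corresponding to $\beta_2$ is contained in the one corresponding to $\beta_1$; taking suprema then yields $\lambda_1(\beta_1)\ge\lambda_1(\beta_2)$. Denote by $\A_\beta$ the set defined in \eqref{conjunto-A} for the parameter $\beta$.

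First I take a witness $v$ for $\lambda\in\A_{\beta_2}$, so $0<c<v<C$ and $\Delta_{\beta_2}v+\lambda v\le 0$. By Remark \ref{remark_cte_niveles} I may replace $v$ by its level-average $\bar v$ from \eqref{eq:carubar}: the bounds $c\le\bar v\le C$ persist, the inequality is preserved, and $\bar v$ is non-increasing across levels. Thus $\bar v(x)=f(|x|)$ for a non-increasing $f:\N_0\to[c,C]$. I then check that $f$ is in fact strictly decreasing: if $f(k)=f(k+1)$ for some $k\ge 1$, then at any $x$ with $|x|=k$ one has $\Delta_{\beta_2}\bar v(x)=\beta_2(f(k-1)-f(k+1))\,p_{\beta_2}^{-k}\ge 0$, so $\Delta_{\beta_2}\bar v(x)+\lambda\bar v(x)\ge \lambda c>0$, contradicting admissibility; the case $k=0$ is analogous since $\Delta_{\beta_2}\bar v(\emptyset)=f(1)-f(0)$.

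The main step is a level-by-level comparison. For $k=|x|\ge 1$, set $D_k:=f(k-1)-f(k+1)>0$ and $E_k:=f(k)-f(k+1)>0$. The admissibility for $\beta_2$ rewrites as
\[
[E_k-\beta_2 D_k]\,p_{\beta_2}^{-k}\ge\lambda f(k)>0,
\]
so in particular $E_k-\beta_2 D_k>0$. Since $\beta\mapsto p_\beta=\beta/(1-\beta)$ is strictly increasing on $(0,1)$, we have $p_{\beta_1}<p_{\beta_2}$, hence $p_{\beta_1}^{-k}>p_{\beta_2}^{-k}$; combined with $E_k-\beta_1 D_k>E_k-\beta_2 D_k$ this gives
\[
[E_k-\beta_1 D_k]\,p_{\beta_1}^{-k}\;>\;[E_k-\beta_2 D_k]\,p_{\beta_2}^{-k}\;\ge\;\lambda f(k),
\]
which is exactly $\Delta_{\beta_1}\bar v(x)+\lambda\bar v(x)\le 0$. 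At $x=\emptyset$ the formula for the $\beta$-Laplacian reduces to $f(1)-f(0)$, independent of $\beta$, so the inequality is identical for the two parameters. Therefore $\bar v$ certifies $\lambda\in\A_{\beta_1}$.

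I do not foresee any real obstacle; the mildly delicate but elementary point is the strict-monotonicity argument for $f$, which is what guarantees $D_k,E_k>0$ and gives the comparison the correct sign.
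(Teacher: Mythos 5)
Your proof is correct and follows essentially the same route as the paper: you reduce to a level-constant, non-increasing witness via Remark \ref{remark_cte_niveles} and then show the admissible set for the larger $\beta$ is contained in that for the smaller one, using $p_{\beta_1}<p_{\beta_2}$ together with the sign of the bracket forced by admissibility. The only difference is cosmetic: your strict-decrease step for $f$ is not actually needed (non-increasing, i.e.\ $D_k\ge 0$, already suffices), whereas the paper simply splits the $\beta_2$-bracket and drops the non-negative term.
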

	
	\begin{proof}
		It suffices to show that if \(0<\gamma<\beta<\frac12,\) then
		\(\A\subseteq\mathcal{A}_{\gamma}.\) 
		
		Let \(\lambda\in\A\), so there exists a function \(v \colon \T\to\R\) 
		with \(0<c < v < C\), such that
		\[
			\Delta_{\beta}v + \lambda v \le 0\, \qquad \text{ in } \T.
		\]
		By Remark \ref{remark_cte_niveles}, we may assume that
		\(v\) is level-wise constant and non-increasing across levels.
		
		Since \(\gamma<\beta,\) it implies \(p_\beta\ge p_\gamma\), and using the fact that \(v\) is positive and non-increasing for all \(x\in\T\setminus\{\emptyset\},\) we compute
		\begin{align*}
			0&\ge -\lambda v(x)\ge \Delta_\beta v(x)
				=p_\beta^{-|x|}\left(\beta v(\hat{x})+(1-\beta)v(x,0)-v(x)\right)\\
			&\ge p_\gamma^{-|x|}\left(\gamma v(\hat{x})+(1-\gamma)v(x,0)
			-v(x)\right)+
			p_\gamma^{-|x|}(\beta-\gamma)\left( v(\hat{x})-v(x,0)
			\right)\\
			&=\Delta_\gamma v(x) + p_\gamma^{-|x|}(\beta-\gamma)\left( v(\hat{x})-v(x,0)\right)\\
			&\ge \Delta_\gamma v(x).
		\end{align*}
		Then, \(\Delta_\gamma v(x) + \lambda v(x) \leq 0\) for all \(x\in \T\) (the condition at \(x=\emptyset\) is automatically satisfied because it does not depend on weight).
		Hence, \(\lambda \in \mathcal{A}_\gamma.\)
	\end{proof}

	We now aim to construct a non-negative eigenfunction associated with $\lambda_1(\beta)$.

	\begin{lem}
		\label{autovalores-menos-1}
		Let  $\beta\in (0,\tfrac12)$ and $\lambda\in \A$ with $\lambda <\lambda_1(\beta)$.
		Then, there exists $\varphi_\lambda \colon \T \to \R$ a non-increasing function with 
		respect to each level such that $\varphi_\lambda \geq 0$, $\varphi_{\lambda}$ is 
		constant in each level, and $\varphi_\lambda$ solves the problem
		\[
			\Delta_\beta \varphi_\lambda(x) + \lambda\varphi_\lambda(x) =-1, 
			\qquad \text{ } x \in \T.
		\]
	\end{lem}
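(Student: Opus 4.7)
The plan is to seek $\varphi_\lambda$ in the class of functions constant on each level of $\T$, writing $\varphi_\lambda(x) = f(|x|)$. This reduces the equation to a scalar tridiagonal difference equation for $\{f(k)\}_{k\ge 0}$, which I would solve by finite truncations with a Dirichlet condition $f_n(n+1) = 0$ and pass to the limit using a strict supersolution produced by the gap $\lambda < \lambda_1(\beta)$.

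Since $\lambda_1(\beta) = \sup \A$, I fix $\mu \in \A$ with $\lambda < \mu$. By the definition of $\A$ and Remark \ref{remark_cte_niveles}, there is a level-wise constant, non-increasing function $v \colon \T \to \R$ with $0 < c \le v \le C$ and $\Delta_\beta v + \mu v \le 0$. A side remark is that any $\lambda \in \A$ satisfies $\lambda < 1$: evaluating $\Delta_\beta v + \lambda v \le 0$ at the root yields $\lambda v(0) \le v(0) - v(1) < v(0)$, since $v$ cannot be constant without violating $\lambda > 0$. Setting $K \coloneqq 1/[(\mu - \lambda) c]$ gives the strict supersolution
\[
\Delta_\beta (Kv) + \lambda (Kv) = K(\Delta_\beta v + \mu v) + K(\lambda - \mu) v \le -K(\mu - \lambda) c = -1.
\]

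Under the level-wise ansatz the equation becomes
\[
(1 - \lambda) f(0) - f(1) = 1, \quad -p_\beta^{-k} \beta f(k-1) + (p_\beta^{-k} - \lambda) f(k) - p_\beta^{-k}(1 - \beta) f(k+1) = 1 \quad (k \ge 1).
\]
Truncating at level $n$ via $f_n(n+1) = 0$ produces a linear system $A_n f_n = \mathbf{1}$. Since $\lambda < 1$ and $p_\beta^{-k} \ge 1$, the diagonal entries $1 - \lambda$ and $p_\beta^{-k} - \lambda$ are positive and the off-diagonals non-positive, so $A_n$ is a $Z$-matrix. Defining $y_n \in \R^{n+1}$ by $y_n(k) = K v(k)$ for $k = 0, \ldots, n$, the interior rows give $(A_n y_n)(k) = -\Delta_\beta(Kv) - \lambda K v \ge 1$; at the boundary row $k = n$, the term $-p_\beta^{-n}(1-\beta) K v(n+1)$ dropped from the full operator adds a non-negative quantity, so $(A_n y_n)(n) \ge 1$ as well. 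The existence of a strictly positive $y_n$ with $A_n y_n > 0$ characterises $A_n$ as an $M$-matrix, so $A_n$ is invertible with $A_n^{-1} \ge 0$. Therefore $f_n \coloneqq A_n^{-1} \mathbf{1}$ exists uniquely and $0 \le f_n \le y_n$.

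For monotonicity $f_n \le f_{n+1}|_{\{0,\ldots,n\}}$, applying $A_n$ to the restriction of $f_{n+1}$ gives exactly $1$ on rows $k < n$ and picks up the non-negative correction $p_\beta^{-n}(1-\beta) f_{n+1}(n+1) \ge 0$ on the last row, so $A_n(f_{n+1}|_{\{0,\ldots,n\}} - f_n) \ge 0$ and $A_n^{-1} \ge 0$ closes the argument. Monotone convergence then gives $f(k) \coloneqq \lim_n f_n(k)$ with $0 \le f \le K v$; since each equation of the difference system involves only three consecutive terms, the equation passes to the limit. Setting $\varphi_\lambda(x) \coloneqq f(|x|)$ yields a bounded, non-negative, level-wise constant solution of $\Delta_\beta \varphi_\lambda + \lambda \varphi_\lambda = -1$, and its non-increasing character across levels follows from Remark \ref{remark_cte_niveles} since $\Delta_\beta \varphi_\lambda + \lambda \varphi_\lambda = -1 \le 0$. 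The main obstacle is the simultaneous invertibility of the truncated systems and uniform control of their solutions; both are handled at once by the supersolution $Kv$, the crucial sign being the non-negativity of the dropped boundary term $p_\beta^{-n}(1-\beta) v(n+1)$, which tilts the boundary row of $A_n y_n$ in the favourable direction.
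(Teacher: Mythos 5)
Your proof is correct, but it follows a genuinely different construction than the paper. The paper runs a Perron-type argument: it also fixes $\mu\in\A$ with $\lambda<\mu<\lambda_1(\beta)$ and scales the associated $v$ to get a strict supersolution $v_\lambda=Kv$ with $\Delta_\beta v_\lambda+\lambda v_\lambda<-2$, but then defines $\varphi_\lambda$ as the pointwise supremum of the class $\B_\lambda=\{\varphi\colon \Delta_\beta\varphi+\lambda\varphi\ge -1,\ 0\le\varphi\le v_\lambda\}$ and shows the equation holds by a local bump argument (if $\Delta_\beta\varphi_\lambda(x)+\lambda\varphi_\lambda(x)>-1$ at some $x$, raising $\varphi_\lambda$ by a small $\delta$ at $x$ stays in $\B_\lambda$, contradicting maximality), only afterwards averaging over levels to get level-wise constancy and monotonicity. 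You instead impose the level-wise ansatz from the start, reduce to a tridiagonal difference equation, solve Dirichlet truncations $A_nf_n=\mathbf 1$, and use the Z-matrix structure (valid since $\lambda<1\le\p^{-k}$) together with the strictly positive vector $y_n=Kv$ satisfying $A_ny_n\ge\mathbf 1$ to conclude $A_n$ is a nonsingular M-matrix with $A_n^{-1}\ge 0$; this gives $0\le f_n\le Kv$, monotonicity in $n$ from the favourable sign of the dropped boundary term, and a monotone limit solving the full equation, with the non-increasing property from Remark \ref{remark_cte_niveles}. Both proofs hinge on the same key object (the strict supersolution produced by the gap $\lambda<\lambda_1(\beta)$), but your route is more constructive and sidesteps the verification that the supremum of the class $\B_\lambda$ still lies in $\B_\lambda$, at the cost of invoking the M-matrix characterization and committing to level-wise constant solutions from the outset; the paper's Perron scheme is closer in spirit to the Berestycki--Nirenberg--Varadhan framework it cites and does not presuppose any symmetry of the solution.
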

	\begin{proof}
		To construct the required function $\varphi_\lambda,$ we first introduce an auxiliary function $v_\lambda$ that will serve as a bound.
		The first step in the proof of this lemma is the construction of $v_\lambda$. 

		Pick $\mu \in \A$ such that $\lambda <\mu<\lambda_1$.
		Then there exists a function $v$ such that 
		\begin{equation}
			\label{supersol-aux}
			\Delta_\beta v + \mu v 
			\leq 
			0 \, \text{ in } \T,
		\end{equation}
		where $v$ satisfies $0<c < v <C$ for some constants $C,c>0$.
		From Remark \ref{remark_cte_niveles}, note that $v$ can be taken constant in each level, and non-increasing with respect to the level.

		From the inequality \eqref{supersol-aux} and using that $\lambda <\mu$, we have
		\[
			\Delta_\beta v + \lambda v  \leq (\lambda-\mu) v <0 \, \text{ in } \T.
		\]
		Since $v>c>0$ we can choose $K>0$ large enough such that 
		$v_\lambda$ defined as $v_{\lambda} \coloneqq K v$ satisfies 
		\begin{equation}
			\label{v_lambda}
			\Delta_\beta v_{\lambda} + \lambda v_{\lambda} <-2 \, \text{ in } \T.
		\end{equation}

		In the second step of the proof we give a good candidate for $\varphi_\lambda$.
		Consider the set
		\begin{equation}
		\B_\lambda 
		\coloneqq 
		\Big\{\varphi \colon \T\to\R \colon 
		\Delta_\beta \varphi + \lambda \varphi \geq -1 \text{ in } \T,  \, 0
		\leq \varphi \leq v_\lambda \Big\},
		\end{equation}
		where $v_\lambda$ is the function constructed in the first step and that satisfies \eqref{v_lambda}.
		Note that $\B_{\lambda}\neq \emptyset$ since the zero function belongs to 
		$\B_\lambda$.
		Set $\varphi_\lambda \colon \T\to \R$ by
		\[
		\varphi_\lambda (x) 
		= 
		\sup \{\varphi(x) \colon \varphi\in \B_\lambda\}
		.
		\]
		This is well-defined as $\B_\lambda$ is nonempty and $v_\lambda$ is bounded.
		It is easy to check that $\varphi_{\lambda}\in \B_{\lambda}$.
		We want to show that 
		\[
			\Delta_\beta \varphi_{\lambda} + \lambda \varphi_{\lambda} = -1 \, \text{ in } \T.
		\]

		We start proving that $\varphi_\lambda < v_{\lambda}$.
		Using that $\Delta_\beta \varphi_{\lambda} + \lambda \varphi_{\lambda} \geq -1$ 
		and the equation \eqref{v_lambda} to estimate $\Delta_\beta$ of the difference, we obtain
		\[
			\Delta_\beta(v_{\lambda}-\varphi_\lambda)\leq -1 \, \text{ in } \T.
		\]
		Since $\beta\in(0,\frac12),$ the Maximum Principle (Theorem \ref{teo_mp}) 
		proves that $\varphi_\lambda < v_{\lambda}$ in $\T$.

		Now, assume that there is $x\in \T$ such that 
		$\Delta_\beta \varphi_{\lambda}(x) + \lambda \varphi_{\lambda}(x) > -1$.
		Consider the function $\varphi^*\colon \T\to \R$ defined by 
		\[
			\varphi^*(y) = 
			\begin{cases}
				\varphi_\lambda(y) & y\neq x, \\
				\varphi_\lambda(x) + \delta & y=x,
			\end{cases}
		\]
		where $\delta>0$ will be chosen later. Note that $\varphi^*\geq 0$.

		First we choose $\delta>0$ small enough such that 
		$\varphi_\lambda(x) + \delta < v_{\lambda}(x)$ still holds.
		By definition of $\varphi^*$ we need to check 
		$\Delta_\beta \varphi^*(y) + \lambda \varphi^*(y) \geq  -1$ 
		only when $y \in \{\hat{x}, x, (x,0), \dots, (x,m-1)\}$.
		
		For $y=x$, we have that
		\[
			\Delta_\beta \varphi^*(x) + \lambda \varphi^*(x)
			=\Delta_\beta \varphi(x) + \lambda \varphi(x) + \delta(\lambda-\p^{-|x|})
			> -1
		\]
		if $\delta>0$ is small enough (note that the factor $(\lambda-\p^{-|x|})$ is negative).
		At $y=\hat{x}$, we obtain
		\[
			\Delta_\beta \varphi^*(\hat{x}) + \lambda \varphi^*(\hat{x}) 
			= 
			\Delta_\beta \varphi_\lambda(\hat{x}) + \lambda \varphi_\lambda(\hat{x})
			+
			\frac{(1-\beta)}{m} \delta \p^{-|x|+1}
			> -1.
		\]
		When $y=(x,i),$ we have
		\[
			\Delta_\beta \varphi^*(x,i) + \lambda \varphi^*(x,i) 
			= \Delta_\beta \varphi_\lambda(x,i) + \lambda \varphi_\lambda(x,i)
			+\beta\delta \p^{-|x|-1}
			> -1.
		\]
		Then, $\varphi^*\in \B_{\lambda}$ which contradicts the definition of $\varphi_\lambda$ 
		as the pointwise supremum in $\B_{\lambda}$.
		It follows that $\varphi_\lambda$ solves the problem 
		$\Delta_\beta \varphi_\lambda + \lambda \varphi_\lambda=-1$.

		To obtain a constant level function we proceed as in Lemma 
		\ref{autofuncion_cte_niveles}, and to show that it is non-increasing 
		as in Remark \ref{remark_cte_niveles}.
	\end{proof}

	For the next result we recall that 
	$\|\varphi\|_{\infty}
	=
	\sup_{x\in \T} |\varphi(x)|
	$.
	Note that the function $\varphi_\lambda$ given in Lemma \ref{autovalores-menos-1} for some $\lambda$ reaches $\|\varphi_\lambda\|_{\infty}$ at the root $\emptyset$.

	\begin{lem}
		\label{norma-a-infinito}
		Let \(\beta\in(0,\frac12)\) and 
		$\{\lambda_n\}_{n\geq 2}\subset \A$ be an increasing sequence such 
		$\lambda_n\to\lambda_1(\beta)$. Write $\varphi_n$ for the function given in 
		Lemma \ref{autovalores-menos-1} that solves 
		$\Delta_\beta \varphi_n + \lambda_n \varphi_n=-1$.
		Then 
		\[
			\|\varphi_n\|_{\infty} \to \infty 
			\text{ when } 
			\lambda_n\to \lambda_1(\beta).
		\]
	\end{lem}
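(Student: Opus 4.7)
I plan to argue by contradiction: assume $\|\varphi_n\|_\infty$ does not diverge. Then, passing to a subsequence (still denoted $\{\varphi_n\}$), there exists $M>0$ with $\|\varphi_n\|_\infty \le M$ for every $n$. The strategy is to extract a pointwise limit $\varphi_\infty$ and use it, after a constant shift, to manufacture a function in $\A$ at a level strictly beyond $\lambda_1(\beta)$, contradicting $\lambda_1(\beta)=\sup\A$.

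First, I would pass to a pointwise limit. Since $\T$ is countable and $\{\varphi_n\}$ is uniformly bounded, a standard diagonal extraction produces a subsequence $\{\varphi_{n_k}\}$ and a function $\varphi_\infty\colon\T\to\R$ with $0\le\varphi_\infty\le M$ such that $\varphi_{n_k}(x)\to\varphi_\infty(x)$ for every $x\in\T$. By Lemma~\ref{autovalores-menos-1}, each $\varphi_n$ is constant on levels and non-increasing in $|x|$, and these two properties are preserved in the pointwise limit. Because $\Delta_\beta$ at a point $x$ is a finite linear combination of the values at $x$, $\hat{x}$, and the successors $(x,i)$, pointwise convergence is enough to pass to the limit in the equation, so
\[
\Delta_\beta \varphi_\infty + \lambda_1(\beta)\,\varphi_\infty = -1 \quad \text{in } \T.
\]

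Next, I would build a supersolution that violates the definition of $\lambda_1(\beta)$. A direct computation from the definition of $\Delta_\beta$ shows that constants are annihilated, both at $\emptyset$ and at $x\neq\emptyset$. Setting $v\coloneqq\varphi_\infty + K$ with $K\coloneqq 1/(2\lambda_1(\beta))$, we obtain $K\le v\le M+K$ and
\[
\Delta_\beta v + \lambda_1(\beta)\,v = -1 + \lambda_1(\beta)\,K = -\tfrac12.
\]
Then for every $\varepsilon>0$ we have $\Delta_\beta v + (\lambda_1(\beta)+\varepsilon)\,v \le -\tfrac12 + \varepsilon(M+K)$, which is $\le 0$ provided $\varepsilon$ is chosen so small that $\varepsilon(M+K)<\tfrac12$. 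Thus $v$ is bounded strictly between two positive constants and witnesses $\lambda_1(\beta)+\varepsilon\in\A$, contradicting $\lambda_1(\beta)=\sup\A$.

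The main obstacle is the passage to the limit in the equation together with the shift trick: it is crucial that the uniform bound $\|\varphi_n\|_\infty\le M$ allows a pointwise limit to exist and to satisfy the same equation at the boundary value $\lambda_1(\beta)$, and that $\Delta_\beta$ kills additive constants so that $v=\varphi_\infty+K$ inherits the supersolution property with only an explicit error term. Once these two points are secured, the contradiction is a one-line estimate.
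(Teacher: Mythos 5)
Your proposal is correct and follows essentially the same route as the paper: assume a uniform bound, extract a pointwise limit by a diagonal argument, pass to the limit in the equation (legitimate since $\Delta_\beta$ at each vertex involves only finitely many values), and then perturb by an additive constant --- which $\Delta_\beta$ annihilates --- to produce a function bounded between positive constants satisfying $\Delta_\beta v+(\lambda_1(\beta)+\varepsilon)v\le 0$, contradicting $\lambda_1(\beta)=\sup\A$. The only cosmetic difference is your choice of the fixed shift $K=1/(2\lambda_1(\beta))$ versus the paper's small shift $\varepsilon$ combined with a small increment $\theta$ of the eigenvalue; both hinge on exactly the same two facts.
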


	\begin{proof}
		Suppose that $\|\varphi_n\|_{\infty}<M$ for all $n$.
		Then, by a diagonal argument we can extract a subsequence 
		$\{\varphi_{n_j}\}_{j\geq 1}$ that converges pointwise in $\T$ to a function $\varphi$.
		Notice that $\|\varphi\|_{\infty}\leq M$, $\varphi\geq 0$ and 
		$\Delta_\beta \varphi +\lambda_1\varphi=-1$.

		For fixed $\varepsilon, \theta>0$, define $\tilde{\varphi}$ as a small perturbation of $\varphi,$ given by $\tilde{\varphi}(x)= \varphi(x)+\varepsilon$.
		Now, we compute
		\begin{align}
			\Delta_\beta \tilde{\varphi}(x) + (\lambda_1+\theta)\tilde{\varphi}(x) 
			&=\Delta_\beta \varphi(x) + \lambda_1\varphi(x) + \theta \varphi(x) 
			+ \lambda_1\varepsilon +\varepsilon\theta\\
			&\leq -1 +  \theta M + \lambda_1\varepsilon +\varepsilon\theta.
		\end{align}
		Hence, taking $\varepsilon, \theta>0$ small enough such that 
		$\theta M + \lambda_1\varepsilon +\varepsilon\theta<1,$ the function $\tilde{\varphi}$ 
		satisfies $0<\varepsilon <\tilde{\varphi} \leq M +\varepsilon,$ 
		and 
		\[
		\Delta_\beta \tilde{\varphi} + (\lambda_1+\theta)\tilde{\varphi} \leq 0, \, \text{ in } \T.
		\] 
		It follows that $(\lambda_1+\theta)\in \A$, and gives a contradiction with the 
		definition of $\lambda_1$.
	\end{proof}

	The next theorem proves that 
	$\lambda_1(\beta)$ is a principal eigenvalue.

	\begin{teo}
	\label{existencia-autofuncion}
		Let \(\beta\in(0,\frac12).\)
		There exists $u\colon \T\to \R$ such that $u(\emptyset)=1$, 
		$u> 0$, and 
		\[
			\begin{cases}
				\Delta_\beta u(x)+\lambda_1(\beta) u(x)=0 &\text{ on }\T,\\
				\lim\limits_{x\to y} u(x) = 0 &\text{ for all } y\in \partial \T.
			\end{cases}
		\]
		Furthermore, $u$ is constant at each level and non-increasing 
		with respect to the levels. 
	\end{teo}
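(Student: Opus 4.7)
The plan is to normalize the functions $\varphi_n$ produced by Lemma \ref{autovalores-menos-1}, extract a pointwise limit by a diagonal procedure, and identify that limit as the desired eigenfunction.

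First I would pick an increasing sequence $\lambda_n \in \A$ with $\lambda_n \to \lambda_1(\beta)$, and let $\varphi_n$ be the corresponding solution of $\Delta_\beta \varphi_n + \lambda_n \varphi_n = -1$ from Lemma \ref{autovalores-menos-1}. Because $\varphi_n$ is non-negative, level-constant, and non-increasing across levels, $\|\varphi_n\|_\infty = \varphi_n(\emptyset)$. Setting
\[
u_n(x) \coloneqq \frac{\varphi_n(x)}{\varphi_n(\emptyset)},
\]
I get $u_n(\emptyset)=1$, $0\le u_n \le 1$, level-constant, non-increasing across levels, and
\[
\Delta_\beta u_n(x) + \lambda_n u_n(x) = -\frac{1}{\varphi_n(\emptyset)}.
\]
By Lemma \ref{norma-a-infinito}, the right-hand side tends to $0$ as $n\to\infty$.

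Next, since $\T$ is countable and $\{u_n\}$ is uniformly bounded, a standard diagonal extraction yields a subsequence (still denoted $u_n$) converging pointwise in $\T$ to some $u\colon \T\to \R$. Passing to the limit in the equation (each evaluation involves only finitely many vertices, so the convergence is uniform on the relevant stencil), I obtain $\Delta_\beta u + \lambda_1(\beta) u = 0$ on $\T$. The properties $u(\emptyset)=1$, $u\ge 0$, level-constancy and non-increasing behavior across levels are preserved under pointwise limits. Since $-\Delta_\beta u = \lambda_1(\beta) u \ge 0$ and $u(\emptyset) = 1 \neq 0$, the Maximum Principle (Theorem \ref{teo_mp}) forces $u > 0$ throughout $\T$.

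The main obstacle is verifying the Dirichlet boundary condition $\lim_{x\to y} u(x) = 0$ on $\partial \T$. Because $u$ is level-constant and non-increasing, the limit along every branch exists and equals a common value $c \ge 0$; I need $c = 0$. Suppose for contradiction that $c > 0$, and consider $w(x) \coloneqq u(x) - c/2$. A direct computation shows that adding a constant does not alter $\Delta_\beta$ (the $\beta$ and $(1-\beta)$ weights sum to $1$ minus the coefficient of the central term), so $\Delta_\beta w = \Delta_\beta u = -\lambda_1 u$. For $\theta>0$,
\[
\Delta_\beta w(x) + (\lambda_1 + \theta) w(x) = -\lambda_1 \tfrac{c}{2} + \theta u(x) - \theta \tfrac{c}{2} \le \theta - \lambda_1 \tfrac{c}{2} - \theta \tfrac{c}{2},
\]
which is strictly negative for $\theta$ sufficiently small. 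Since $c/2 \le w \le 1 - c/2$ is bounded between two positive constants, this exhibits $\lambda_1 + \theta \in \A$, contradicting $\lambda_1 = \sup \A$. Therefore $c = 0$, and $u$ is the desired eigenfunction.
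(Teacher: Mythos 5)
Your proposal is correct and follows essentially the same route as the paper: normalize the $\varphi_n$ from Lemma \ref{autovalores-menos-1} (using that $\|\varphi_n\|_\infty=\varphi_n(\emptyset)$), extract a pointwise limit by a diagonal argument, pass to the limit in the equation via Lemma \ref{norma-a-infinito}, and obtain positivity and the boundary condition by the Maximum Principle and the perturbation argument $\lambda_1+\theta\in\A$. The only difference is that you write out explicitly the positivity and boundary-condition steps that the paper delegates to ``proceeding as in the proof of Lemma \ref{lema_fe_3}''.
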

	\begin{proof}
		Consider the sequence $\{u_n\}_{n\geq 2}$ defined as $u_n(x) 
		=  \varphi_n(x)/\|\varphi_n\|_{\infty}$ where $\{\varphi_n\}_{n\geq 2}$ 
		is the sequence of the Lemma \ref{norma-a-infinito}.
		Each $u_n$ is constant at each level, non-increasing and $\|u_n\|_{\infty}=1$ for all 
		$n$.
		Moreover, we have $u_n(\emptyset)=1$ for all $n$.
		Using a diagonal argument, we extract a pointwise convergent subsequence $\{u_{n_j}\}_{j\geq 1}$. Denote its limit by $u$. 
		Then, $u\colon \T\to \R$ satisfies $u\geq 0$, $u(\emptyset)=1$, and 
		\[
			\Delta_\beta u(x)+\lambda_1 u(x) 
			= 
			\lim_{n\to\infty} \Delta_\beta u_n(x)+\lambda_n u_n(x)
			=
			\lim_{n\to\infty} \frac{-1}{\|\varphi_n\|_{\infty}}
			=0.
		\]
		We have also that $u$ is constant at each level and it is non-increasing on levels.
		
		Finally, proceeding as in the proof of Lemma \ref{lema_fe_3}, 
		it can be shown that $u$ is positive and satisfies the boundary condition.
		\end{proof}

	Our next task is to find bounds for $\lambda_1(\beta).$

	\begin{pro} 
		Let $\beta\in(0,\tfrac12).$ 
		Then
		\[
			\frac{(1-2\beta)^2}{\beta^2 + (1-\beta)^2}\le 
			\lambda_1(\beta)\le \frac{1-2\beta}{1-\beta}.
		\]
	\end{pro}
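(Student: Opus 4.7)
The proof naturally splits into upper and lower bounds; both rely on exhibiting the right comparison function.

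\textbf{Upper bound.} The key observation is that the profile $w(x)=\p^{|x|}$ is $\Delta_\beta$-harmonic off the root. A direct calculation (using the identity $\beta+(1-\beta)\p^2-\p=0$) gives $-\Delta_\beta w(\emptyset)=1-\p$ and $-\Delta_\beta w(x)=0$ for $x\neq\emptyset$, so $-\Delta_\beta w \le (1-\p)\,w$ on all of $\T$. Suppose for contradiction that some $\lambda\in\A$ satisfies $\lambda>1-\p$, witnessed by $v$ with $0<c\le v\le C$ and $\Delta_\beta v+\lambda v\le 0$. Set
\[
Z(x,t)=e^{-\lambda t}v(x),\qquad W(x,t)=K\,e^{-(1-\p)t}w(x),
\]
with $K>0$ small enough that $W(\cdot,0)\le Z(\cdot,0)$. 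A quick check gives $Z_t-\Delta_\beta Z\ge 0$ and $W_t-\Delta_\beta W\le 0$; integrating with the factor $e^{t\p^{-|x|}}$ recasts these as supersolution and subsolution inequalities in the $\mathcal{K}_\beta$-sense of Theorem \ref{teo_cp_p}. That theorem then gives $Z\ge W$ on $\T\times[0,\infty)$, i.e.\ $v(x)/w(x)\ge K e^{(\lambda-(1-\p))t}$ for all $t$; fixing $x$ and letting $t\to\infty$ contradicts the finiteness of $v(x)/w(x)$. Hence $\lambda_1(\beta)\le 1-\p=(1-2\beta)/(1-\beta)$.

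\textbf{Lower bound.} Write $\lambda^\ast=(1-\p)^2/(1+\p^2)$, equal to the claimed bound $(1-2\beta)^2/(\beta^2+(1-\beta)^2)$. The strategy is to exhibit an explicit $v\in\A$ at $\lambda=\lambda^\ast$. The ansatz is the three-parameter family
\[
v(x)=L+(c_2+c_3|x|)\,\p^{|x|},
\]
with $L,c_2,c_3>0$ to be determined. The virtue of this family is that the linear-times-$\p^k$ piece is \emph{resonant} for the interior recurrence: a direct computation yields
\[
\Delta_\beta v(\emptyset)=-c_2(1-\p)+c_3\p,\qquad \Delta_\beta v(x)=-c_3(1-2\beta) \text{ for } x\neq\emptyset,
\]
the second identity being level-independent because the $L$ and $c_2\p^{|x|}$ parts of $v$ are $\Delta_\beta$-harmonic off the root while the resonance $c_3|x|\p^{|x|}$ contributes a pure constant.

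Consequently $\Delta_\beta v+\lambda^\ast v\le 0$ reduces to one inequality at $\emptyset$ and the single interior inequality $\lambda^\ast v(x)\le c_3(1-2\beta)$. For $c_2,c_3$ in a mild range $v$ is non-increasing in $|x|$, making the interior constraint tightest at $|x|=1$; forcing equality in both gives the linear system
\[
\lambda^\ast(L+c_2)=c_2(1-\p)-c_3\p,\qquad \lambda^\ast\bigl(L+(c_2+c_3)\p\bigr)=c_3(1-2\beta).
\]
Up to overall scaling this system has a unique positive solution exactly when $\lambda^\ast<1-\p$ (the upper bound, already proved) and $\lambda^\ast\p<1-2\beta$; the latter simplifies to the elementary inequality $1-\p+\p^2+\p^3>0$, valid on $(0,1)$. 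With such $v\in\A$ at $\lambda^\ast$, we conclude $\lambda_1(\beta)\ge\lambda^\ast$. The main obstacle is identifying the right ansatz for the lower bound: without the resonance term $c_3|x|\p^{|x|}$ one has $\Delta_\beta v=0$ on the interior and no positive $\lambda$ is attainable; conversely the resonance alone fails the root condition. The three-parameter family above is the minimal one simultaneously balancing both inequalities at $\lambda^\ast$, after which only elementary verification remains.
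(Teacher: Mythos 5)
Your argument is correct in outline but follows a genuinely different route from the paper. The paper deduces both bounds from the existence result (Theorem \ref{existencia-autofuncion}): taking the positive eigenfunction $u$ for $\lambda_1$, constant on levels with values $u_k\downarrow 0$, it sums the eigenvalue recurrence against $\p^k$; the telescoping identity $\lambda_1\sum_{k}\p^k u_k=1-2\beta+\beta\lambda_1$ combined with $1\le\sum_k\p^k u_k\le\tfrac{1-\beta}{1-2\beta}$ yields the two inequalities simultaneously. You instead work directly from the definition of $\A$: for the upper bound you compare an arbitrary witness $v$ of $\lambda\in\A$ with the profile $w(x)=\p^{|x|}$ (harmonic off the root, $-\Delta_\beta w(\emptyset)=1-\p$) via the parabolic comparison principle (Theorem \ref{teo_cp_p}) -- this is correct, and is the same device the paper uses to show eigenvalues dominate elements of $\A$; for the lower bound you exhibit an explicit element of $\A$ at $\lambda^\ast=(1-\p)^2/(1+\p^2)$ through the resonant ansatz $v(x)=L+(c_2+c_3|x|)\p^{|x|}$, whose key identities $\Delta_\beta v(\emptyset)=-c_2(1-\p)+c_3\p$ and $\Delta_\beta v(x)=-c_3(1-2\beta)$ for $x\neq\emptyset$ I have checked and are right. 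Your proof is more self-contained (it needs only Theorem \ref{teo_cp_p} and the definition of $\lambda_1$, not the existence of the principal eigenfunction), at the price of having to produce explicit barriers; the paper's proof is shorter but leans on Theorem \ref{existencia-autofuncion}.

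There is, however, one incorrectly justified step in the lower bound: the claim that your linear system has a positive solution ``exactly when $\lambda^\ast<1-\p$ and $\lambda^\ast\p<1-2\beta$.'' Eliminating $L$ gives $c_2(1-\p)(1-\lambda)=c_3\bigl[\p(1-\lambda)+\tfrac{1-\p}{1+\p}\bigr]$, so $c_2,c_3>0$ only requires $\lambda<1$, and then $L>0$ is equivalent to the quadratic condition $\tfrac{1-\p}{1+\p}(1-\p-\lambda)>\p\lambda(1-\lambda)$, which is neither implied by nor equivalent to your two inequalities (it fails as $\lambda\uparrow 1-\p$ even though both of your conditions may still hold there). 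The conclusion nevertheless survives: at $\lambda=\lambda^\ast$ one has $1-\p-\lambda^\ast=\tfrac{\p(1-\p)(1+\p)}{1+\p^2}$ and $1-\lambda^\ast=\tfrac{2\p}{1+\p^2}$, and the condition reduces to $1+\p^2>2\p$, i.e.\ $(1-\p)^2>0$. Moreover, once $L>0$ the root equation forces $c_2(1-\p)>c_2(1-\p-\lambda^\ast)>c_3\p$, which is exactly the monotonicity you invoke to make $|x|=1$ the binding interior level, so the ``mild range'' remark becomes automatic. With the ``exactly when'' sentence replaced by this direct verification, your proof is complete. (Incidentally, the true solvability threshold of your ansatz is the smaller root of $\p(1+\p)\lambda^2-(1+\p^2)\lambda+(1-\p)^2=0$, which is strictly larger than $\lambda^\ast$; your family therefore proves a slightly better lower bound than the one stated.)
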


	\begin{proof} 
		By Theorem \ref{existencia-autofuncion} there is a positive eigenfunction 
		$u$ associated with $\lambda_1$ such that $u(\emptyset)=1,$ 
		and $u$ is constant across levels.
		We write $u_k\coloneqq u(x)$ where $|x|=k$.
		With this notation we have
		\begin{align}
			\label{cota_inf_aux0}
			\lambda_1 &= 1 - u_1\\
			\lambda_1 u_k \p^k &= 
			\beta(u_k - u_{k-1}) + (1-\beta)(u_{k} - u_{k+1})
			\quad
			\forall k \ge 1.
		\end{align} 
		Therefore, using $u_k \leq u_0=1$ for all $k$ and $\p\in (0,1)$, we have
		\begin{equation}
			\label{cota_inf_aux1}
			1\le \sum_{k=0}^{\infty} \p^k u_k
			\leq
			\sum_{k=0}^{\infty} \p^k
			= 
			\frac{1-\beta}{1-2\beta}.
		\end{equation}
		Then, applying \eqref{cota_inf_aux0} and $\lim_{k\to \infty} u_k = 0$, we get
		\begin{equation}
		\label{cota_inf_aux2}
		\lambda_1 \sum_{k=0}^{\infty} \p^k u_k
		=
		- \beta + (1-\beta) u_1+\lambda_1
		=
		1 - 2\beta + \beta \lambda_1.
		\end{equation} 
		Combining \eqref{cota_inf_aux1} and \eqref{cota_inf_aux2}, we obtain
		\[
		    \lambda_1 \le 1 - 2\beta + \beta \lambda_1
		    \leq
		    \lambda_1\frac{1-\beta}{1-2\beta},
		\]
		and the bounds follow by manipulating this expression.
	\end{proof}
	
	\begin{re}
	Notice that the obtained bounds,
			\[
			\frac{(1-2\beta)^2}{\beta^2 + (1-\beta)^2}\le 
			\lambda_1(\beta)\le \frac{1-2\beta}{1-\beta}
		\]
	imply that
		\[
			\lim_{\beta \nearrow \frac12} \lambda_1(\beta) =0, \qquad \text{and} \qquad 
			\lim_{\beta \searrow 0} \lambda_1(\beta) =1.
		\]
	\end{re}

	\begin{lem}
	Let $u\colon \T\to \R$ be an eigenfunction associated to $\lambda_1$ with $u(\emptyset)=1,$ and such that $u$ is constant at each level and decreasing with respect to the levels.
	Then
	\[
	\liminf_{k\to \infty} \frac{u_k-u_{k+1}}{\p^k} \geq \lambda_1.
	\]
	\end{lem}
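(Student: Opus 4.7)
The plan is to rewrite the eigenvalue equation as a linear recurrence in the normalized level differences
\[
d_k := u_k - u_{k+1}, \qquad e_k := \frac{d_k}{p_\beta^k},
\]
and then to observe that the recurrence is non-decreasing, so that $e_k \geq e_0$ for all $k$.

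First I would use that $u$ is constant at each level, apply the eigenvalue equation $-\Delta_\beta u = \lambda_1 u$ at a generic vertex $x$ with $|x| = k \geq 1$, and rearrange to get
\[
\lambda_1 u_k\, p_\beta^k \;=\; \beta(u_k - u_{k-1}) + (1-\beta)(u_k - u_{k+1}) \;=\; -\beta d_{k-1} + (1-\beta) d_k.
\]
Dividing both sides by $p_\beta^k$ and using the key algebraic identity $\beta/p_\beta = 1-\beta$ (which follows directly from $p_\beta = \beta/(1-\beta)$), the cross term simplifies cleanly and the recurrence collapses to
\[
(1-\beta)\, e_k \;=\; \lambda_1 u_k + (1-\beta)\, e_{k-1}, \qquad k \geq 1,
\]
so that $e_k = e_{k-1} + \frac{\lambda_1}{1-\beta} u_k$.

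Next I would establish the base case by applying the eigenvalue equation at the root: since $u(\emptyset,i) = u_1$, we get $\lambda_1 u_0 = u_0 - u_1$, i.e. $\lambda_1 = 1 - u_1 = d_0 = e_0$. Since $u_k \geq 0$ and $\lambda_1 > 0$, the recurrence shows that $\{e_k\}_{k \geq 0}$ is a non-decreasing sequence. Therefore $e_k \geq e_0 = \lambda_1$ for every $k$, and in particular $\liminf_{k\to\infty} e_k \geq \lambda_1$, which is exactly the claim.

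There is no real obstacle here — the only slightly subtle point is the algebraic simplification $\beta \cdot p_\beta^{-1} = 1-\beta$ that turns the recurrence into a telescoping/monotone one. Care must also be taken to verify the base case $e_0 = \lambda_1$ using the eigenvalue equation at the root (which has a slightly different form from the equation at non-root vertices).
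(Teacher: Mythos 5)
Your proof is correct and follows essentially the same route as the paper: both derive from the level recurrence that the normalized differences $e_k=(u_k-u_{k+1})\p^{-k}$ are non-decreasing and identify $e_0=\lambda_1$ from the equation at the root. The only (harmless) difference is that you keep the exact telescoping identity $e_k=e_{k-1}+\tfrac{\lambda_1}{1-\beta}u_k$, whereas the paper discards the positive term $\lambda_1 u_k\p^{k}$ at once to get the inequality $u_{k-1}-u_k<\p^{-1}(u_k-u_{k+1})$ and then induct.
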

	\begin{proof}
	Using that $u$ is an eigenfunction and $\lambda_1$ is a principal eigenvalue, we have
	\[
	0 < \lambda_1 u_k \p^k 
	= 
	\beta(u_k - u_{k-1}) 
	+ 
	(1-\beta)(u_{k} - u_{k+1})
	\qquad
	\text{ for } k \ge 1.
	\]
	So, since $u_k > u_{k+1}$ we have that
	\[
	(u_{k-1}- u_k) 
	<
	\frac{1-\beta}{\beta} (u_{k} - u_{k+1})
	=
	\p^{-1} (u_{k} - u_{k+1})
	\qquad
	\text{ for } k \ge 1.
	\]
	By induction we get 
	\[
	u_0 -u_1 \leq \p^{-k} (u_{k} - u_{k+1}) \qquad \forall k\geq 1.
	\]
	At the root $x=\emptyset$, the function $u$ satisfies
	\[
	\Delta_\beta u(\emptyset) = u_1-u_0=u_1-1 = -\lambda_1.
	\] 
	Hence, $\lambda_1 \leq \p^{-k} (u_{k} - u_{k+1})$ for all $k\geq 1$ and the result follows.
	\end{proof}

\section{Evolution equation}\label{eq}

	Our first goal in this section is to prove that, for $f\in L^\infty(\T,\mathbb{R}),$ the evolution equation \eqref{ec.evolucion} admits a unique solution.
	
	\begin{teo}
		Let $\beta\in [0,\tfrac12)$ and $f\in L^\infty(\T).$ Then there exists a unique solution 
		$u\in L^\infty(\T\times[0,\infty))$ to \eqref{ec.evolucion}.
	\end{teo}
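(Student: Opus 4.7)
The plan is to base uniqueness on the Parabolic Comparison Principle (Theorem \ref{teo_cp_p}) and existence on a monotone iteration in the integral reformulation \eqref{Kbeta}. For uniqueness, if $u,v\in L^\infty(\T\times[0,\infty))$ are two solutions to \eqref{ec.evolucion} with the same initial datum $f$, then both are fixed points of $\mathcal{K}_\beta^f$, so in particular the trivially satisfied inequalities $u\ge\mathcal{K}_\beta^f u$ and $v\le\mathcal{K}_\beta^f v$ hold; Theorem \ref{teo_cp_p} applied with the common datum $f$ yields $u\ge v$, and exchanging the roles of $u$ and $v$ gives $u\equiv v$.

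For existence I would set up a monotone iteration. Using the identities $\Delta_\beta u(x)+\p^{-|x|}u(x)=\p^{-|x|}\bigl(\beta u(\hat{x})+\tfrac{1-\beta}{m}\sum_{i=0}^{m-1}u(x,i)\bigr)$ for $x\neq\emptyset$ and $\Delta_\beta u(\emptyset)+u(\emptyset)=\tfrac{1}{m}\sum_{i=0}^{m-1}u(\emptyset,i)$, the integrand in \eqref{Kbeta} is a nonnegative linear combination of values of $u$ at neighbours of $x$, so $\mathcal{K}_\beta^f$ is monotone on $L^\infty(\T\times[0,\infty))$. A direct computation on the constant function $c$ gives $\mathcal{K}_\beta^f c=c+e^{-t\p^{-|x|}}(f(x)-c)$, hence $\mathcal{K}_\beta^f(-\|f\|_\infty)\ge -\|f\|_\infty$ and $\mathcal{K}_\beta^f(\|f\|_\infty)\le\|f\|_\infty$. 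Starting with $u^0\equiv -\|f\|_\infty$, $\bar u^0\equiv\|f\|_\infty$ and iterating $u^{n+1}\coloneqq\mathcal{K}_\beta^f u^n$, $\bar u^{n+1}\coloneqq\mathcal{K}_\beta^f\bar u^n$, monotonicity gives $u^n\le u^{n+1}\le\bar u^{n+1}\le\bar u^n$ for every $n$, with all iterates bounded by $\|f\|_\infty$ in absolute value.

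The pointwise limits $u\coloneqq\lim_n u^n$ and $\bar u\coloneqq\lim_n \bar u^n$ therefore exist in $L^\infty(\T\times[0,\infty))$ with $\|u\|_\infty,\|\bar u\|_\infty\le\|f\|_\infty$, and dominated convergence under the integral in \eqref{Kbeta}, using the integrable envelope $\p^{-|x|}e^{(s-t)\p^{-|x|}}\|f\|_\infty$ in $s$ for each fixed $x$ and $t$, shows both are fixed points of $\mathcal{K}_\beta^f$. The uniqueness argument applied to the pair $(u,\bar u)$ then forces $u\equiv\bar u$, yielding the unique solution. The main obstacle will be the monotonicity and order-interval invariance verification for $\mathcal{K}_\beta^f$, which is clean once the Duhamel integrand is rewritten as above, together with the dominated convergence step, which has to be handled pointwise in $x$ because the coefficient $\p^{-|x|}$ is unbounded across $\T$ and there is no uniform contraction estimate available on $L^\infty(\T\times[0,T])$ for the standard Picard scheme.
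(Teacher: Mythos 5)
Your proposal is correct, and it reaches the result by a genuinely different construction than the paper. Both arguments start from the same reformulation of \eqref{ec.evolucion} as the fixed-point equation $u=\mathcal{K}_\beta^f u$, and your uniqueness step is exactly the paper's: any two fixed points are simultaneously super- and subsolutions in the integral sense, so Theorem \ref{teo_cp_p} applied twice gives coincidence. For existence, however, the paper uses a Perron-type scheme: it takes $u$ to be the pointwise supremum of all bounded functions $w$ with $w\le \mathcal{K}_\beta^f w$ and $w\le \|f\|_\infty$, and shows the supremum is a fixed point by a pointwise bump argument (raise $u$ by $\delta$ at a single point where strict inequality would hold, check the perturbed function is still admissible via the comparison principle, and contradict maximality). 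You instead run a monotone iteration from the constant sub/supersolutions $\mp\|f\|_\infty$, using the key observation — which also underlies the paper's argument, though it is left implicit there — that $\Delta_\beta u+\p^{-|x|}u$ is a nonnegative convex combination of neighbouring values, so $\mathcal{K}_\beta^f$ is order-preserving and leaves the interval $[-\|f\|_\infty,\|f\|_\infty]$ invariant; monotone convergence plus dominated convergence in the Duhamel integral (pointwise in $x$, which is the right way to handle the unbounded factor $\p^{-|x|}$) gives two fixed points, which uniqueness then identifies. Your route buys an explicit a priori bound $\|u\|_\infty\le\|f\|_\infty$ and a constructive approximation scheme, and it avoids the single-point perturbation argument; the paper's Perron construction avoids any limit-passage under the integral but needs the bump contradiction and an extra appeal to Theorem \ref{teo_cp_p} to keep the perturbed candidate below $\|f\|_\infty$. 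Both proofs ultimately rest on the same two ingredients: the monotone structure of $\mathcal{K}_\beta^f$ and the parabolic comparison principle.
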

	
	\begin{proof}
		We begin by noting that $w_1(x,t)=\|f\|_\infty$ and $w_2(x,t)=-\|f\|_\infty$ satisfy
		\[
			w_2(x,t)\le w_1(x,t) \text{ and }w_2(x,t)\le K_f^\beta w_2(x,t) 
		\]
		for any $(x,t)\in\T\times[0,\infty).$ Then
		\[
			\mathcal{S}\coloneqq \Big\{w\in L^\infty(\T\times[0,\infty)) 
			\colon w\le w_1 \text{ and } w\le K_f^\beta w \text{ in }
			\T\times[0,\infty)
			\Big\}\neq\emptyset.
		\]
		Therefore,
		\[
			u(x,t)\coloneqq\sup\Big\{w(x,t)\colon w\in\mathcal{S}\Big\} \qquad \forall (x,t)\in\T\times[0,\infty]
		\]
		is well-defined. Moreover, it holds that
		\[
			-w_2(x,t)\le u(x,t)\le w_1(x,t) \quad \text{ and } \quad u(x,t)\le K_f^\beta u(x,t) 
		\]
		for any $(x,t)\in\T\times[0,\infty).$
		
		Let us see that $u(x,t)= K_f^\beta u(x,t)$ for any $(x,t)\in\T\times[0,\infty).$
		Arguing by contradiction, assume that there exists $(x_0,t_0)\in\T\times[0,\infty)$ such that 
		\[
			u(x_0,t_0)< K_f^\beta u(x_0,t_0).
		\]   
		Then there is $\varepsilon>0$ such that 
		\[
			u(x_0,t_0)+\varepsilon \le K_f^\beta u(x_0,t_0).
		\]   
		
		We define
		\[
			\tilde{u}(x,t)\coloneqq
			\begin{cases}
				u(x,t) & \text{if } (x,t)\neq(x_0,t_0),\\
				u(x,t) +\varepsilon & \text{if } (x,t)=(x_0,t_0).
			\end{cases}
		\]
		It follows that
		\[
			 u(x,t)\le \tilde{u}(x,t) \qquad \text{and} \qquad \tilde{u}(x,t)\le K_f^\beta \bar{u}(x,t) 
		\]
		for all $(x,t)\in\T\times[0,\infty).$ Moreover, by Theorem \ref{teo_cp_p}
		\[
			\tilde{u}(x,t)\le w_1(x,t) \qquad \text{ for every } (x,t)\in\T\times[0,\infty),
		\]
		and hence $\tilde{u}\in\mathcal{S},$ which contradicts the definition of $u$ as the supremum.
		Therefore, 
		\[
		u(x,t)= K_f^\beta u(x,t),\, \text{ for all } (x,t)\in\T\times[0,\infty),
		\] 
		which proves that $u$ is a solution of \eqref{ec.evolucion}.
		
		Finally, the uniqueness follows from Theorem \ref{teo_cp_p}.
	\end{proof}
	
	To conclude this article,  we observe that there is an exponential 
	decay for the solution of the evolution problem. 
	
	\begin{teo}\label{teo_evo_1}
			Let $\beta\in (0,\tfrac12),$ and $f\in L^{\infty}(\T)$ and $u$ be the solution 
			of \eqref{ec.evolucion}. Then, for any \(\lambda\in\A\) there
			is a positive constant \(C=C(\lambda)\) such that
			\[
				|u(x,t)|\le C e^{-\lambda t} \quad 
					\forall (x,t)\in\T\times(0,\infty).
			\]
	\end{teo}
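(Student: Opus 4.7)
The plan is to convert membership in $\A$ into a time-dependent supersolution of the heat equation and then invoke the Parabolic Comparison Principle (Theorem \ref{teo_cp_p}). Since $\lambda\in\A$, we have a bounded function $v\colon \T\to\R$ with constants $0<c<v(x)<C'$ satisfying $\Delta_\beta v+\lambda v\le 0$ in $\T$. Setting $K \coloneqq \|f\|_\infty/c$, the function $v$ satisfies $Kv(x)\ge \|f\|_\infty\ge |f(x)|$ for all $x\in\T$, so in particular $Kv\ge f$ and $-Kv\le f$ pointwise.

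Next, I would introduce the barriers $w^{\pm}(x,t) \coloneqq \pm K e^{-\lambda t} v(x)$. A direct computation gives
\[
w^+_t(x,t)-\Delta_\beta w^+(x,t)= -K e^{-\lambda t}\bigl(\lambda v(x)+\Delta_\beta v(x)\bigr)\ge 0,
\]
and the reverse inequality for $w^-$. I would then translate these classical super/subsolution properties into the integral form needed by Theorem \ref{teo_cp_p}: setting $h(x,t) \coloneqq w^+(x,t)-\mathcal{K}_\beta^{Kv}w^+(x,t)$ and differentiating formula \eqref{Kbeta} in $t$ yields
\[
h_t(x,t)+\p^{-|x|}h(x,t)=w^+_t(x,t)-\Delta_\beta w^+(x,t)\ge 0, \qquad h(x,0)=0.
\]
Multiplying by the integrating factor $e^{\p^{-|x|}t}$ gives $h\ge 0$ at each fixed $x$, that is, $w^+\ge \mathcal{K}_\beta^{Kv}w^+$. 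The symmetric argument shows $w^-\le \mathcal{K}_\beta^{-Kv}w^-$.

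With these ingredients in hand, Theorem \ref{teo_cp_p} applies twice. Comparing $w^+$ (with initial datum $Kv\ge f$) to $u=\mathcal{K}_\beta^{f}u$ gives $u(x,t)\le K e^{-\lambda t} v(x)$; comparing $u$ to $w^-$ (with initial datum $-Kv\le f$) gives $u(x,t)\ge -K e^{-\lambda t} v(x)$. Combining and using $v\le C'$, we conclude
\[
|u(x,t)|\le K C' e^{-\lambda t}=\frac{\|f\|_\infty C'}{c}\, e^{-\lambda t}\qquad \forall (x,t)\in\T\times(0,\infty),
\]
which is the stated bound with $C=\|f\|_\infty C'/c$.

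I do not expect any genuine obstacle: the only non-cosmetic step is verifying the Volterra-type inequality $w^+\ge\mathcal{K}_\beta^{Kv}w^+$, and that reduces to the one-line Gronwall argument above. The constants $c$ and $C'$ depend on the choice of $v$, hence on $\lambda$, which matches the statement $C=C(\lambda)$; the dependence on $f$ enters only through the factor $\|f\|_\infty$.
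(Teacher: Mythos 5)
Your proposal is correct and follows essentially the same route as the paper: take the strict supersolution $v$ coming from $\lambda\in\A$, form the barrier $\frac{\|f\|_\infty}{c}e^{-\lambda t}v(x)$ (and its negative), and conclude by the Parabolic Comparison Principle (Theorem \ref{teo_cp_p}). The only difference is that you explicitly verify the translation of the pointwise supersolution inequality into the integral form $w^{+}\ge\mathcal{K}_\beta^{Kv}w^{+}$ required by that theorem, a step the paper leaves implicit; your Gronwall-type check of it is correct.
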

	
	\begin{proof}
		Let \(\lambda\in\A\). 
		Then there exists a function \(v \colon \T\to\R\) with \(0< c < v < C,\) for some constants \(c, C >0,\) such that
		\[
			\Delta_{\beta}v + \lambda v \le 0\, \text{ in }  \T. 
		\]
		Define
		\[
			w(x,t)= \frac{\|f\|_\infty}{c}e^{-\lambda t}v(x).
		\]
		Then \(w(x,0)\ge f(x)\) for all \(x\in\T,\) and
		\[
			w_t(x,t)=-\lambda \frac{\|f\|_\infty}{c}e^{-\lambda t}v(x)\ge 
			\frac{\|f\|_\infty}{c}e^{-\lambda t}\Delta_\beta v(x)=\Delta_\beta w(x,t)
			\text{ in } \T,
		\] 
		that is,
		\[
			w_t(x,t)-\Delta_\beta w(x,t)\ge0
			\text{ in } \T.
		\] 
		By the Parabolic Comparison Principle (Theorem \ref{teo_cp_p}), we obtain
		\[
			u(x,t)\le w(x,t) \le \frac{\|f\|_\infty}{c}C e^{-\lambda t}\quad \forall 
			(x,t)\in\T\times(0,\infty),
		\]
		
		Similarly, defining \(h=-w,\) we obtain
		\[
			u(x,t)\ge -\frac{\|f\|_\infty}{c}C e^{-\lambda t}\quad \forall 
			(x,t)\in\T\times(0,\infty),
		\]
		and the proof is complete.
	\end{proof}

	Finally, proceeding analogously to that in the proof of the preceding 
	theorem we can show the following result.
	
	\begin{teo}\label{teo_evo_2}
			Let $\beta\in (0,\tfrac12),$ and let $f\in L^{\infty}(\T)$ be such that 
			there exists a constant \(K>0\) with
			\[
				|f(x)|\le K v(x) \quad \forall x\in \T
			\]
			where \(v\) is the positive eigenfunction associated with \(\lambda_1(\beta)\) normalized so that \(v(\emptyset)=1.\)
			Then, the solution $u$ of \eqref{ec.evolucion} satisfies
			\[
				|u(x,t)|\le K e^{-\lambda_1(\beta)t} \qquad 
					\text{ for all } (x,t)\in\T\times(0,\infty).
			\]
	\end{teo}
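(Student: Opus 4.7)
The plan is to mimic the proof of Theorem \ref{teo_evo_1}, but replacing the auxiliary supersolution $v$ (which only satisfied a strict inequality) by the principal eigenfunction itself. Since $v$ satisfies $\Delta_\beta v + \lambda_1(\beta) v = 0$ exactly, the exponent in the comparison function sharpens from $\lambda$ to $\lambda_1(\beta)$, and the hypothesis $|f(x)|\le K v(x)$ is precisely what is needed to initialize the comparison.

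Concretely, I set
\[
    w(x,t) \coloneqq K e^{-\lambda_1(\beta)\, t} v(x),
\]
where $v$ is the positive eigenfunction associated with $\lambda_1(\beta)$ given by Theorem \ref{existencia-autofuncion}, normalized by $v(\emptyset)=1$. A direct computation gives
\[
    w_t(x,t) - \Delta_\beta w(x,t)
    = K e^{-\lambda_1(\beta)\, t}\bigl(-\lambda_1(\beta) v(x) - \Delta_\beta v(x)\bigr) = 0
    \quad\text{in } \T \times (0,\infty),
\]
and by hypothesis $w(x,0) = K v(x) \ge |f(x)| \ge f(x)$ on $\T$. Hence $w$ is a (super)solution of \eqref{ec.evolucion} with initial datum above $f$, and Theorem \ref{teo_cp_p} yields $u(x,t) \le w(x,t)$ on $\T\times[0,\infty)$. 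Applying the same argument to $h(x,t) \coloneqq -K e^{-\lambda_1(\beta)\, t} v(x)$, which satisfies $h_t - \Delta_\beta h = 0$ and $h(x,0) \le f(x)$, we obtain $u(x,t) \ge h(x,t)$, and therefore
\[
    |u(x,t)| \le K e^{-\lambda_1(\beta)\, t} v(x) \quad \forall (x,t) \in \T\times(0,\infty).
\]

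To conclude with the estimate stated in the theorem, it suffices to note that $v(x) \le 1$ for every $x \in \T$. This follows from Theorem \ref{existencia-autofuncion}: the eigenfunction $v$ may be taken to be constant on each level and non-increasing across levels, so that $v(x) \le v(\emptyset) = 1$ for all $x$. Inserting this bound in the previous inequality yields the desired decay estimate. I do not expect any real obstacle here: the structure of the argument is identical to that of Theorem \ref{teo_evo_1}, and the only subtle point is the normalization of $v$, which is already built into the hypothesis.
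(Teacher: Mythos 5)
Your proof is correct and follows exactly the route the paper intends, since the paper simply states that Theorem \ref{teo_evo_2} is proved ``proceeding analogously'' to Theorem \ref{teo_evo_1}, replacing the strict supersolution by the principal eigenfunction and using the Parabolic Comparison Principle (Theorem \ref{teo_cp_p}). Your closing observation that $v\le v(\emptyset)=1$, justified by the level-constancy and monotonicity of the eigenfunction from Theorem \ref{existencia-autofuncion}, is precisely the detail needed to pass from $K e^{-\lambda_1(\beta)t}v(x)$ to the stated bound.
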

	
		Note that Theorem \ref{teo_evo_2} applies, in particular, when \(f\in L^{\infty}(\T)\) has finite support.

\section*{Acknowledgments}

This work was partially supported by the Math AmSud Program through the grants 21-MATH-04 and 23-MATH-08. Del Pezzo and Frevenza are supported by Agencia Nacional de Investigación e Innovación (ANII Uruguay), grant FCE-3-2024-1-181302. Frevenza also received funding from the Fondo Vaz Ferreira under grant FVF-061-2021 (Uruguay).

\bibliographystyle{amsplain}

\end{document}